\documentclass[12pt]{amsart}
\usepackage{amsthm}
\usepackage{amsmath,amssymb,amsfonts,marvosym,amscd}
\usepackage{tikz}
\usepackage{tikz-cd}\usepackage{wasysym}  
\usepackage{pifont} 
\usepackage{graphicx}
\usepackage{psfrag}
\usepackage{datetime}
\usepackage{enumerate}

\usepackage{geometry} 
\usepackage[all]{xy}
\geometry{a4paper}
\usepackage{hyperref}  
  
\let\emptyset\varnothing  
\setlength\arraycolsep{1pt}

\title[Higher supp. tilting I: higher Auslander alg. of lin. oriented type $A$]{Higher support tilting I: higher Auslander algebras of linearly oriented type $A$}
\author{Jordan McMahon}
\begin{document}

\newtheorem{lm}{Lemma}[section]
\newtheorem{prop}[lm]{Proposition}
\newtheorem{conj}[lm]{Conjecture}
\newtheorem{cor}[lm]{Corollary}
\newtheorem{theorem}[lm]{Theorem}

\theoremstyle{definition}
\newtheorem{defn}{Definition}
\newtheorem{eg}{Example}
\newtheorem{remark}{Remark}
\newtheorem{qu}{Question}

\begin{abstract}
For a path algebra $A$ over a quiver $Q$, there are bijections between the support-tilting modules of $A$, torsion classes in $\mathrm{mod}(A)$ and wide subcategories in $\mathrm{mod}(A)$; these are part of the Ingalls-Thomas bijections. As a blueprint for further study, we show how these bijections manifest themselves for higher Auslander algebras of linearly oriented type $A$. In particular, we introduce a higher analogue of torsion classes in $d$-representation-finite algebras.
\end{abstract}

\maketitle
\tableofcontents

\newcommand{\sub}{\underline}
\newcommand{\ra}{\rightarrow}
\newcommand{\mc}{\mathcal}
\newcommand{\blp}{^\prime}
\newcommand{\mcm}{\operatorname{\sub{MCM}(A)}}
\newcommand{\ba}{\begin{enumerate}[(a)]}

\section{Introduction}
Support-tilting modules were first studied by Ringel \cite{ringusup}. Their importance was highlighted by Ingalls-Thomas \cite{it}, who showed the following result.

\begin{theorem}\cite[Theorem 1.1]{it}\label{crowd}
For a finite-dimensional algebra $A=kQ$ that is hereditary and representation finite, there are bijections between the following objects:
\begin{itemize}
\item Isomorphism classes of basic support-tilting $A$-modules.
\item Torsion classes in $\mathrm{mod}(A)$.
\item Wide subcategories in $\mathrm{mod}(A)$.
\end{itemize}
\end{theorem}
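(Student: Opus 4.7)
The plan is to establish the bijections pairwise, using the torsion classes as the central object. In one direction I would send a basic support-tilting module $T$ to its torsion class $\mathrm{Fac}(T)$, and conversely send a torsion class $\mathcal{T}$ to the direct sum of its indecomposable Ext-projectives (those $M \in \mathcal{T}$ with $\mathrm{Ext}^1(M, \mathcal{T}) = 0$). That these are mutually inverse follows from standard results: since $A$ is hereditary, Bongartz completion produces a support-tilting module from any partial tilting module; since $A$ is representation-finite, every torsion class is functorially finite and is generated by finitely many Ext-projective indecomposables.

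For the correspondence between torsion classes and wide subcategories I would use the Ingalls--Thomas map $\alpha$ sending a torsion class $\mathcal{T}$ to
\[
\alpha(\mathcal{T}) = \{X \in \mathcal{T} \mid \text{for every } f \colon Y \to X \text{ with } Y \in \mathcal{T},\; \ker f \in \mathcal{T}\},
\]
with inverse sending a wide subcategory $\mathcal{W}$ to the smallest torsion class $T(\mathcal{W})$ containing it. The hereditary hypothesis is crucial here: it ensures $\alpha(\mathcal{T})$ is closed under kernels and cokernels taken in $\mathrm{mod}(A)$, so that it forms a wide (i.e.\ exact abelian extension-closed) subcategory.

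The main obstacle will be verifying $\alpha(T(\mathcal{W})) = \mathcal{W}$ for every wide subcategory $\mathcal{W}$. A productive strategy is to invoke the fact that a wide subcategory of $\mathrm{mod}(A)$ (with $A$ hereditary) is itself equivalent to $\mathrm{mod}(B)$ for some hereditary algebra $B$, so that $\mathcal{W}$ has a well-defined set of simple objects. One then identifies $\alpha(T(\mathcal{W}))$ with the modules in $T(\mathcal{W})$ admitting filtrations by these simples, and this set coincides with $\mathcal{W}$ itself. The reverse composite $T(\alpha(\mathcal{T})) = \mathcal{T}$ follows more directly, since the Ext-projectives of $\mathcal{T}$ already lie in $\alpha(\mathcal{T})$ and generate $\mathcal{T}$ as a torsion class.
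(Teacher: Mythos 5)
The paper does not prove Theorem~\ref{crowd}; it is cited directly from Ingalls--Thomas. Your sketch does follow the same overall route as \cite{it}: $T \mapsto \mathrm{Fac}(T)$ with inverse given by Ext-projectives for the first bijection, and the map $\alpha$ (which this paper also adapts in Definition~\ref{betadef}) for the second, with the hereditary hypothesis used to make $\alpha(\mathcal{T})$ exact-abelian.

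However, there is a genuine error in the last paragraph. You claim ``the Ext-projectives of $\mathcal{T}$ already lie in $\alpha(\mathcal{T})$,'' and this is false. Take $A = kQ$ for $Q \colon 1 \to 2 \to 3$ and $\mathcal{T} = \mathrm{Fac}(P_1) = \mathrm{add}\bigl(P_1,\ \begin{smallmatrix}1\\2\end{smallmatrix},\ S_1\bigr)$. All three indecomposables are Ext-projective in $\mathcal{T}$ (indeed $P_1 \oplus \begin{smallmatrix}1\\2\end{smallmatrix} \oplus S_1$ is the tilting module with $\mathrm{Fac}$ equal to $\mathcal{T}$), but the surjection $P_1 \twoheadrightarrow \begin{smallmatrix}1\\2\end{smallmatrix}$ has kernel $S_3 \notin \mathcal{T}$, so $\begin{smallmatrix}1\\2\end{smallmatrix} \notin \alpha(\mathcal{T})$; similarly $S_1 \notin \alpha(\mathcal{T})$. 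In fact $\alpha(\mathcal{T}) = \mathrm{add}(P_1)$. What Ingalls--Thomas actually use here is that the \emph{split} projectives of $\mathcal{T}$ (those $P \in \mathcal{T}$ for which every surjection $Y \twoheadrightarrow P$ with $Y \in \mathcal{T}$ splits) both lie in $\alpha(\mathcal{T})$ and generate $\mathcal{T}$ as a torsion class; the split projectives are a proper subset of the Ext-projectives in general. Notably, the present paper is itself careful about this distinction in its higher-dimensional analogue: in the proof of Theorem~\ref{masod} it passes to ``the minimal summand $T'$ of $T$ such that $\mathrm{Fac}(T') = \mathrm{Fac}(T)$'' precisely because $T$ itself need not lie in $\alpha(\mathcal{T})$. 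Your equality $T(\alpha(\mathcal{T})) = \mathcal{T}$ is still true, but the justification has to go through split projectives rather than Ext-projectives.
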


This correspondence forms part of the Ingalls-Thomas bijections and was generalised to all representation-finite finite-dimensional algebras by Marks-{\v{S}}{\`t}ov{\'\i}{\v{c}}ek \cite{ms}. Included in the Ingalls-Thomas bijections were also clusters in the acyclic cluster algebra whose initial seed is in given by $Q$. A consequence was that support-tilting modules were able to capture the behaviour of clusters, and this led to further study. Significantly, support $\tau$-tilting theory \cite{air}, \cite{ir} was able capture this behavior more generally, and has seen much activity in recent years, see for example \cite{adachi}, \cite{asai}, \cite{bst}, \cite{dij},  \cite{dirrt}, \cite{irrt}, \cite{irtt}, \cite{jassored}, \cite{ppp}.

A natural question to ask is whether similar results are true in the context of higher Auslander-Reiten theory, as introduced by Iyama in \cite{iy1}, \cite{iy2}. The limiting factor is how the support of a module behaves: for an algebra $A$ with $d$-cluster-tilting subcategory $\mathcal{C}\subseteq \mathrm{mod}(A)$ we would like to be able to transfer information about $\mathcal{C}$ to this support. So to most easily replicate the theory of support tilting, we will only consider modules whose support is determined by an idempotent ideal $I$ such that $\mathcal{C}\cap \mathrm{mod}(A/I)\subseteq \mathrm{mod}(A/I)$ is a $d$-cluster-tilting subcategory. Any such module that is in addition $d$-tilting as an $A/I$-module will be said to be a \emph{proper $d$-support-tilting}. 

Another concept that needs to be generalised is that of a torsion class. Suppose that $\mathcal{T}\subseteq \mathcal{C}$ is an additive subcategory such that for any $d$-exact sequence $$0\rightarrow M_0\rightarrow M_1\rightarrow \cdots \rightarrow M_{d+1}\rightarrow M_{d+2}=0,$$ and for any $1\leq i\leq d+1$, if both $M_{i-1} \in\mathcal{T}$ and $M_{i+1}\in\mathcal{T}$ then so is $M_i$. In this case $\mathcal{T}$ is a \emph{$d$-strong torsion class}. These concepts are able to extend the classical behaviour of torsion classes to the class of $d$-representation-finite algebras whose $d$-cluster-tilting subcategories are almost directed (see Definition \ref{almost}). This class of algebras includes the higher Auslander algebras of linearly oriented type $A$.

\begin{theorem}[Theorem \ref{elso}]
Let $A$ be a finite-dimensional algebra with a $d$-cluster-tilting subcategory $\mathcal{C}\subseteq \mathrm{mod}(A)$ such that $\mathrm{gl.dim}(A)\leq d$ and $\mathcal{C}$ is almost directed. For a given proper support-$d$-tilting $A$-module $T$, let $\mathcal{T}:=\mathrm{Fac}(T)\cap \mathcal{C}$. Then the following hold:
\begin{enumerate}
\item $\mathcal{T}$ is a $d$-strong torsion class.
\item For all $M\in \mathcal{C}$, there is a module $F_M\in\mathrm{mod}(A)$ and an exact sequence
$$0\rightarrow T_1\rightarrow T_2\rightarrow \cdots\rightarrow T_d\rightarrow M \rightarrow F_M\rightarrow 0$$
such that $T_i\in\mathcal{T}$ for all $1\leq i\leq d$ and $$\mathcal{T}=\{T\in\mathcal{C}|\mathrm{Hom}_A(T,F_M)=0\ \forall M\in\mathcal{C}\}.$$ 
\end{enumerate}
\end{theorem}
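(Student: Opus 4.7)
The plan is to handle the two statements separately, since part (1) is a closure condition on the subcategory while part (2) is a constructive approximation result, and to exploit in both cases that $T$ is genuinely $d$-tilting over the quotient $A/I$ cutting out its support.

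For part (1), I would begin by noting that $M_i\in\mathcal{C}$ holds automatically since all terms of a $d$-exact sequence in $\mathcal{C}$ lie in $\mathcal{C}$, so the content is showing $M_i\in\mathrm{Fac}(T)$. The idea is to split the $d$-exact sequence $0\to M_0\to\cdots\to M_{d+1}\to 0$ into the short exact sequences determined by the kernels/images $K_j=\ker(M_j\to M_{j+1})$, and then apply $\mathrm{Hom}_A(T,-)$. Because $T$ is a $d$-tilting $A/I$-module, $\mathrm{Fac}(T)$ admits the Brenner--Butler-style characterisation by $\mathrm{Ext}^{1},\dots,\mathrm{Ext}^{d}$-vanishing against $T$, so the hypothesis that both $M_{i-1}$ and $M_{i+1}$ lie in $\mathcal{T}$ translates into Ext-vanishing statements on the neighbouring terms. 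Chasing the associated long exact sequences (and using the $d$-cluster-tilting property of $\mathcal{C}\cap\mathrm{mod}(A/I)$, which ensures the $d$-exact sequence remains exact after applying the relevant Hom-functors) then forces the corresponding Ext-groups of $M_i$ to vanish, giving $M_i\in\mathrm{Fac}(T)$.

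For part (2), the exact sequence is essentially the higher analogue of the Bongartz complement/approximation construction. Since $T$ is $d$-tilting as an $A/I$-module and $M\in\mathcal{C}$, I would iterate minimal right $\mathrm{add}(T)$-approximations starting from $M$, producing $T_d\to M$, then $T_{d-1}\to\ker(T_d\to M)$, and so on. Because $T$ has projective dimension $\leq d$ and the Ext-vanishing of $d$-tilting modules kicks in, this process terminates after $d$ steps with a kernel $T_1$ that still lies in $\mathrm{add}(T)$, and the cokernel at the right end is precisely $F_M$. The almost directed hypothesis is what guarantees that the approximating terms $T_i$ and all intermediate kernels remain in $\mathcal{C}$ (hence in $\mathcal{T}$), rather than merely in $\mathrm{mod}(A)$. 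For the Hom-theoretic description of $\mathcal{T}$, the inclusion $\mathcal{T}\subseteq\{T'\in\mathcal{C}\mid\mathrm{Hom}_A(T',F_M)=0\ \forall M\in\mathcal{C}\}$ is immediate from applying $\mathrm{Hom}_A(T',-)$ to the sequence and using $T'\in\mathrm{Fac}(T)$ together with the Ext-vanishing of $T$; the reverse inclusion is obtained by specialising to $M=T'$, where the assumed Hom-vanishing $\mathrm{Hom}_A(T',F_{T'})=0$ forces the map $T_d\to T'$ to be an epimorphism, exhibiting $T'$ as a factor of an object in $\mathrm{add}(T)$.

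I expect the main obstacle to be part (2), and specifically the step ensuring that the iterated approximations stay inside the $d$-cluster-tilting subcategory $\mathcal{C}$ and terminate in exactly $d$ steps. This is where the almost directed hypothesis has to do real work: without it, higher extensions between indecomposables could produce kernels outside $\mathcal{C}$, so I would expect to invoke the structural description of almost directed $d$-cluster-tilting subcategories (Definition \ref{almost}) to argue that at each stage the kernel of the approximation is again in $\mathcal{C}$. A secondary subtlety is verifying that the closure property in part (1) is compatible with the definition of $d$-strong torsion class for all indices $1\leq i\leq d+1$ simultaneously, which I would handle uniformly by treating the $d$-exact sequence as a complex and using symmetry of the Ext-vanishing conditions, rather than performing $d+1$ separate case analyses.
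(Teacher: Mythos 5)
Your proposal differs substantially from the paper's proof in both order and mechanism, and as written it has gaps that are not merely technical.

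The paper proves part (2) first and then derives part (1) from the $\mathrm{Hom}(-,F_M)$-characterisation it establishes; you propose to prove part (1) independently via an Ext-vanishing description of $\mathrm{Fac}(T)$. That description is not available as you state it: the naive Brenner--Butler identity $\mathrm{Fac}(T)=\{M:\mathrm{Ext}^i(T,M)=0,\ i>0\}$ fails for $d$-tilting modules when $d>1$, and although within $\mathcal{C}$ the only relevant group is $\mathrm{Ext}^d_A(T,-)$, one still has to be careful that $T$ is only $d$-tilting over $A/\langle e_T\rangle$, not over $A$, so the comparison uses the $(d+1)$-idempotent property (Corollary \ref{adapt}). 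More seriously, your long-exact-sequence chase threads through the intermediate kernels/images of the $d$-exact sequence, and these are generically \emph{not} in $\mathcal{C}$; the cluster-tilting vanishing you need to close the chase does not apply to them. The paper's route around this --- characterising $\mathcal{T}$ by $\mathrm{Hom}(-,F_M)=0$ and then using exactness of $\mathrm{Hom}(-,F_M)$ on $d$-exact sequences --- avoids touching the intermediate kernels at all.

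For part (2) you identify the right concern (why do the iterated $\mathrm{add}(T)$-approximation kernels stay in $\mathcal{C}$?), but you defer it to ``the structural description of almost directed subcategories'' without saying which part. In the paper this is exactly where the proof becomes non-formal: it invokes condition (\ref{nak2}) of Definition \ref{almost} to find a left properly-supporting idempotent $e$ with the approximating object $T'$ projective over $A/\langle e\rangle$, passes through $G=-\otimes_A A/\langle e\rangle$ (Lemma \ref{skel2}), and then uses resonance diagrams and Lemma \ref{chevelle} to organise the terms. In particular the resulting terms are $G(T_i)$, which are \emph{factor modules} of objects in $\mathrm{add}(T)$ --- hence in $\mathcal{T}=\mathrm{Fac}(T)\cap\mathcal{C}$ but generally not in $\mathrm{add}(T)$. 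Your scheme tries to keep $T_i\in\mathrm{add}(T)$ throughout, which is a strictly stronger conclusion than the theorem claims and than the paper's construction delivers, and I do not see how to force the final kernel into $\mathrm{add}(T)$ by a projective-dimension argument. The resonance-diagram machinery, which depends on condition (\ref{nak0}) (relations of length at most two), is the essential ingredient you never engage with; without it I do not see how to make either the termination or the containment in $\mathcal{C}$ go through.

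Your derivation of the $\mathrm{Hom}$-characterisation of $\mathcal{T}$ from the exact sequence (specialising to $M=T'$ and reading off surjectivity of $T_d\to T'$) does agree with the paper's concluding observation, so that part of the plan is sound.
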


A $d$-strong torsion class is \emph{standard} if it can be described as $\mathrm{Fac}(T)\cap\mathcal{C}$ for a proper support-$d$-tilting $A$-module $T$. Recently, wide subcategories were defined for $d$-abelian categories by Herschend-J\o rgensen-Vaso \cite{hjv}. One might hope that there is a bijection between wide subcategories and proper support-$d$-tilting modules. However in higher dimensions we must instead consider certain combinations of wide subcategories called \emph{resonant collections} (see Definition \ref{resonant}). This allows us to generalise Theorem \ref{crowd} to the following extent.

\begin{theorem}[Theorem \ref{masod}]
Let $A$ be a $d$-Auslander algebra of linearly oriented type $A_n$ with unique $d$-cluster-tilting subcategory $\mathcal{C}$. Then there are bijections between the following:

\begin{itemize}
\item Proper support-$d$-tilting $A$-modules. 
\item Standard $d$-strong torsion classes in $\mathcal{C}$. 
\item Resonant collections of wide subcategories in $\mathcal{C}$.
\item Standard $d$-strong torsion-free classes in $\mathcal{C}$.
\item Coresonant collections of wide subcategories in $\mathcal{C}$.
\end{itemize}
\end{theorem}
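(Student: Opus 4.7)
My plan is to establish the five-way bijection by translating everything through the combinatorial model of $\mathcal{C}$: for the $d$-Auslander algebra of linearly oriented type $A_n$, the indecomposables of $\mathcal{C}$ are indexed by $(d+1)$-element subsets of a cyclically ordered set, and proper support-$d$-tilting modules correspond to certain internal dissections. All five objects in the theorem can then be presented as data attached to such a dissection, and the proof proceeds by matching these data across the bijections.

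For (1) $\leftrightarrow$ (2), Theorem~\ref{elso} already supplies the forward map $T\mapsto\mathrm{Fac}(T)\cap\mathcal{C}$, which produces a $d$-strong torsion class that is standard by definition. I would recover $T$ from $\mathcal{T}=\mathrm{Fac}(T)\cap\mathcal{C}$ as the direct sum of the indecomposable Ext-projectives of $\mathcal{T}$ inside $\mathcal{C}$; the uniqueness of the approximation sequence given in part~(2) of Theorem~\ref{elso} ensures the inverse is well-defined. The bijections (1) $\leftrightarrow$ (4) and (2) $\leftrightarrow$ (4) would then follow from the $d$-Nakayama duality on $\mathcal{C}$, which corresponds combinatorially to rotating the dissection and which interchanges $\mathrm{Fac}$ with $\mathrm{Sub}$ on the $d$-cluster-tilting subcategory.

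The essential work lies in (2) $\leftrightarrow$ (3); the dual statement (4) $\leftrightarrow$ (5) will again follow by Nakayama duality once the first is established. I would construct the map $\mathcal{T}\mapsto\{\mathcal{W}_i\}$ as follows: given the dissection associated to $\mathcal{T}$ via (1) $\leftrightarrow$ (2), its complement decomposes canonically into maximal cyclic sub-intervals, each of which indexes a wide subcategory of $\mathcal{C}$ in the sense of Herschend--J\o rgensen--Vaso; the family $\{\mathcal{W}_i\}$ of these subcategories will be shown to satisfy the resonance condition of Definition~\ref{resonant}. For the inverse I would reassemble a torsion class from a resonant collection $\{\mathcal{W}_i\}$ by taking the additive closure of the union of $\mathrm{Fac}(P_i)\cap\mathcal{C}$, where $P_i$ denotes the direct sum of Ext-projectives of $\mathcal{W}_i$, and then verify that this lands in the image of the map from part~(1).

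The main obstacle is that a single wide subcategory does not carry enough information to reconstruct a torsion class in higher dimension: the same $\mathcal{W}$ can appear as a component of many different resonant collections, and what rigidifies the correspondence is the fine-grained compatibility across the collection encoded by Definition~\ref{resonant}. Verifying that this definition captures exactly the image of the map from $d$-strong torsion classes, neither admitting spurious collections nor excluding genuine ones, is the technical heart of the argument and will require a careful case analysis on the cyclic-set combinatorics underlying $\mathcal{C}$.
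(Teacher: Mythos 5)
Your plan reroutes the whole argument through a combinatorial model (indexing indecomposables of $\mathcal{C}$ by $(d+1)$-subsets and matching proper support-$d$-tilting modules with dissections), which is a genuinely different strategy from the one the paper takes. The paper's proof stays module-theoretic throughout: it introduces an auxiliary subcategory $\alpha(\mathcal{T})$ (Definition \ref{betadef}), partitions it into wide subcategories using a relation $\prec$ defined via $d$-exact sequences whose end terms escape $\alpha(\mathcal{T})$, and then uses the resonance-diagram machinery (Proposition \ref{resonance}, Example \ref{aresonance}) together with Lemmas \ref{chevelle}, \ref{skel2}, \ref{indec} and \ref{indec2} to verify (W1), (W2) and the resonance condition directly. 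Your step (1) $\leftrightarrow$ (2) is essentially identical to the paper's, but in (2) $\leftrightarrow$ (3) you and the paper build different objects from a torsion class: you take maximal sub-intervals of the complement of a dissection, while the paper partitions the intrinsically defined $\alpha(\mathcal{T})$. These may well agree for linearly oriented type $A_n$, but that agreement itself is something you would have to prove, and you never establish that the indecomposables of $\mathcal{C}$ and proper support-$d$-tilting modules actually admit the combinatorial parametrisation you invoke.

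The larger concern is that your plan defers exactly what the paper spends most of its proof on. The sentence ``what rigidifies the correspondence is the fine-grained compatibility across the collection encoded by Definition \ref{resonant}\ldots{} is the technical heart of the argument and will require a careful case analysis'' is not a proof of that heart; it is an acknowledgement that it is missing. The paper's corresponding content is the argument showing $\prec$ is a total order, that $\phi$ with $M_0,M_{d+1}\in\alpha(\mathcal{T})$ and an intermediate $M_i\notin\alpha(\mathcal{T})$ forces $M_{d+1}\prec M_0$, the passage through properly-supporting idempotents and the functor $F$, and the identification $\mathrm{Fac}(M)=\mathrm{Fac}(T')$ for the minimal summand $T'$. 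None of this work is replaced by an equivalent combinatorial argument in your proposal. Similarly, your appeal to a ``$d$-Nakayama duality'' that ``rotates the dissection'' for (1) $\leftrightarrow$ (4) and (4) $\leftrightarrow$ (5) is plausible but unsubstantiated; the paper instead relies on the fact (stated as a corollary after Lemma \ref{happel}) that proper support-$d$-tilting modules are also proper support-$d$-cotilting, and then dualises. So while the overall architecture of your proposal is reasonable, as it stands it is an outline with the decisive steps left open rather than a proof that could stand in for the paper's.
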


\section{Background and notation}

Consider a finite-dimensional algebra $A$ over a field $k$, and fix a positive integer $d$. An $A$-module will mean a finitely-generated left $A$-module; by $\mathrm{mod}(A)$ we denote the category of finite-dimensional left $A$-modules. The functor $D=\mathrm{Hom}_k(-,k)$ defines a duality, and we set $\tau_d=\tau\circ \Omega^{d-1}$ to be the $d$-Auslander-Reiten translation.  For an $A$-module $M$, let $\mathrm{add}(M)$ be the full subcategory of $\mathrm{mod}(A)$ composed of all $A$-modules isomorphic to direct summands of finite direct sums of copies of $M$. 

Define the \emph{dominant dimension} of $A$ $\mathrm{dom.dim}(A)$ to be the number $n$ such that for a minimal injective resolution of $A$:
$$0\rightarrow A \rightarrow I_0\rightarrow \ldots \rightarrow  I_{n-1}\rightarrow I_{n}\rightarrow \cdots$$ the modules $I_0,\cdots, I_{n-1}$ are projective-injective and $I_{n}$ is not projective.
A subcategory $\mathcal{C}$ of $\mathrm{mod}(A)$ is \emph{precovering} if for any $M\in \mathrm{mod}(A)$ there is an object $C_M\in\mathcal{C}$ and a morphism $f:C_M\rightarrow M$ such that for any morphism $X\rightarrow M$ with $X\in \mathcal{C}$ factors through $f$; that there is a commutative diagram:

$$\begin{tikzcd} 
\ &X\arrow{d}\arrow[dotted]{dl}[above]{\exists}\\
C_M\arrow{r}{f}&M
\end{tikzcd}$$

The dual notion of precovering is \emph{preenveloping}.  A subcategory $\mathcal{C}$ that is both precovering and preenveloping is called \emph{functorially finite}. For a finite-dimensional algebra $A$, a functorially-finite subcategory $\mathcal{C}$ of $\mathrm{mod}(A)$ is a \emph{$d$-cluster-tilting subcategory} if it satisfies the following conditions:
\begin{itemize}
\item $\mathcal{C}=\{M\in\mathrm{mod}(A)|\mathrm{Ext}^i_A(\mathcal{C},M)=0 \ \forall\ 0<i<d\}$.
\item  $\mathcal{C}=\{M\in\mathrm{mod}(A)|\mathrm{Ext}^i_A(M,\mathcal{C})=0 \ \forall\ 0<i<d\}$.
\end{itemize}
If there exists a $d$-cluster-tilting subcategory $\mathcal{C}\subseteq \mathrm{mod}(A)$, then $(A,\mathcal{C})$ is a \emph{$d$-homological pair} in the sense of \cite{hjv}. If $A$ is a finite-dimensional algebra such that $(A,\mathcal{C})$ is a $d$-homological pair and $\mathrm{gl.dim}(A)\leq d$, then $A$ is \emph{$d$-representation finite} in the sense of \cite{io}. The class of $d$-representation-finite algebras were characterised by Iyama as follows.

\begin{theorem}\cite[Proposition 1.3, Theorem 1.10]{iy2}\label{artheory}
Let $A$ be a finite-dimensional algebra such that $\mathrm{gl.dim}(A)\leq d$. Then there is a unique $d$-cluster-tilting subcategory $\mathcal{C}\subseteq\mathrm{mod}(A)$ if and only if $$\mathrm{dom.dim}(\mathrm{End}(M)^\mathrm{op})\geq d+1 \geq\mathrm{gl.dim}( \mathrm{End}(M)^\mathrm{op})$$ where $M$ is an additive generator of the subcategory 
$$\mathcal{C}=\mathrm{add}(\{\tau_d^i(DA)|i\geq 0\})\subseteq\mathrm{mod}(A).$$
\end{theorem}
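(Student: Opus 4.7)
The plan is to prove both directions via Iyama's higher Auslander correspondence, which reduces each implication to a homological analysis of $B=\mathrm{End}_A(M)^{\mathrm{op}}$ where $M$ is the additive generator of the candidate subcategory. The main tool is the $d$-Auslander-Reiten duality $D\mathrm{Ext}^d_A(X,Y)\cong \mathrm{Hom}_A(Y,\tau_dX)$, valid under the standing hypothesis $\mathrm{gl.dim}(A)\leq d$, together with the equivalence $\mathrm{Hom}_A(M,-)\colon \mathrm{add}(M)\xrightarrow{\sim} \mathrm{proj}(B)$.

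For the forward direction, suppose $\mathcal{C}\subseteq \mathrm{mod}(A)$ is a $d$-cluster-tilting subcategory. The Ext-vanishing axioms force $DA\subseteq \mathcal{C}$, because injectives have no outgoing Ext. Using the duality above, one shows that $\tau_d$ carries non-projective summands in $\mathcal{C}$ back to $\mathcal{C}$; iterating from $DA$ gives $\tau_d^i(DA)\in\mathcal{C}$ for all $i\geq 0$. The reverse containment, that every indecomposable of $\mathcal{C}$ lies in the $\tau_d$-orbit of $DA$, follows from the observation that each $\tau_d$-orbit in $\mathcal{C}$ begins at an injective and terminates at a projective under the finite global-dimension assumption. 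Once $\mathcal{C}=\mathrm{add}(M)$ is identified, the equivalence $\mathrm{Hom}_A(M,-)\colon \mathcal{C}\xrightarrow{\sim} \mathrm{proj}(B)$ transports homological data: a $d$-step approximation resolution in $\mathcal{C}$ of a simple top lifts to a projective resolution of length at most $d+1$ over $B$, yielding $\mathrm{gl.dim}(B)\leq d+1$; dually, because $DA\subseteq \mathcal{C}$, the first $d+1$ terms of a minimal injective resolution of $B$ are projective-injective, giving $\mathrm{dom.dim}(B)\geq d+1$.

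For the backward direction, assume the dimension bounds on $B$. The hypothesis $\mathrm{dom.dim}(B)\geq d+1\geq \mathrm{gl.dim}(B)$ ensures that the minimal projective-injective cogenerator $P$ of $\mathrm{mod}(B)$ is rich enough to recover $A\cong\mathrm{End}_B(P)^{\mathrm{op}}$, reversing the construction from the forward direction. Translating Ext groups across the equivalence then verifies that $\mathrm{add}(M)$ satisfies the Ext-vanishing and functorial finiteness required of a $d$-cluster-tilting subcategory. Uniqueness follows because any other $d$-cluster-tilting subcategory $\mathcal{C}'$ must, by the argument of the forward direction, contain the $\tau_d$-orbit of $DA$, hence contain $\mathcal{C}$; equality is forced by applying the Ext-vanishing axioms symmetrically. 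The main obstacle, and the heart of the theorem, is controlling the $\tau_d$-orbit of $DA$ without presupposing that $\mathcal{C}$ has finitely many indecomposables: one must argue, using the interplay between $\mathrm{gl.dim}(A)\leq d$ and the Ext-vanishing conditions, that the orbit terminates, at which point the existence of $M$ as a genuine $A$-module is secured and the entire correspondence with $B$ can be set up.
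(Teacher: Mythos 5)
The paper does not prove Theorem~\ref{artheory}; it is recalled verbatim from Iyama (the cited Proposition~1.3 and Theorem~1.10 of \cite{iy2}) as background material, and no proof appears in the text. There is therefore no ``paper's own proof'' to compare against, and your proposal should be read as a sketch of Iyama's argument rather than a reconstruction of anything in this paper.

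Judged on its own terms, your sketch does capture the correct overall strategy --- identify $\mathcal{C}$ with the $\tau_d$-orbit of $DA$, transport homological data across the equivalence $\mathrm{Hom}_A(M,-)\colon\mathrm{add}(M)\xrightarrow{\sim}\mathrm{proj}(\mathrm{End}(M)^{\mathrm{op}})$, and appeal to the higher Auslander correspondence for the converse --- but it is too coarse at the points that actually carry the weight. The claim that ``$\tau_d$ carries non-projective summands in $\mathcal{C}$ back to $\mathcal{C}$'' is itself a nontrivial theorem of Iyama (that $\tau_d$ and $\tau_d^-$ induce mutually inverse bijections between non-projective and non-injective indecomposables in a $d$-cluster-tilting subcategory), and cannot be dispatched by invoking the duality $D\mathrm{Ext}^d_A(X,Y)\cong\mathrm{Hom}_A(Y,\tau_dX)$ alone. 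Likewise, the assertion that each $\tau_d$-orbit ``begins at an injective and terminates at a projective under the finite global-dimension assumption'' conflates two things: that orbits are bounded between an injective and a projective is one claim, but that the orbit of $DA$ is \emph{finite} (so that an additive generator $M$ exists as a finite-dimensional module) is precisely the issue you flag at the end without resolving. Finally, the backward direction cannot simply invoke the higher Auslander correspondence as a black box, since that correspondence is essentially a rephrasing of the statement one is trying to prove; the content there is the explicit reconstruction of $(A,M)$ from $B$ via the minimal faithful projective-injective and the verification of the Ext-vanishing axioms, which you gesture at but do not carry out. In short: right skeleton, but the load-bearing steps are asserted rather than argued, and those are exactly the steps that make this a theorem of Iyama rather than an exercise.
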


For an $A$-module $M$, the annihilator of $M$ is the two-sided ideal $$\mathrm{ann}(M):=\{a\in A|Ma=0=aM\}.$$ The support of $M$, denoted $\mathrm{supp}(M)$, is defined to be the set of vertices such that for each vertex $i$, the module $S_i$ is contained in the composition series of $M$. 
For a module $M$, let 
\begin{align}
e_M&:=\sum_{i\notin\mathrm{supp}(M)}e_i\label{supp1},
\end{align}
and for a class of modules $\mathcal{T}\subseteq\mathcal{C}$ let 
\begin{align}
e_\mathcal{T}&:=\sum_{\{\not\exists M\in\mathcal{T}|i\in\mathrm{supp}(M)\}}e_i.\label{supp2}
\end{align}

An idempotent ideal $\langle e \rangle$ is \emph{$d$-idempotent} \cite[Section 1]{apt} if there are isomorphisms $\mathrm{Ext}^d_A(M,N)\cong \mathrm{Ext}^d_{A/\langle e \rangle}(M,N)$ for all $0\leq i\leq d$. Within higher Auslander-Reiten theory, $d$-idempotent ideals were heavily used in the construction of higher Nakayama algebras \cite[Lemma 1.20]{jk}. The singularity categories of higher Nakayama algebras were also described using idempotent ideals in \cite{mc2}. For the following result to make sense, we note that it is well-known that for a finite-dimensional algebra $A$ and injective $A$-module $I$, the $A/\langle e \rangle$-module $ \mathrm{Hom}_A(A/\langle e \rangle,I)$ is injective. Dually, for an injective $A$-module $P$, the $A/\langle e \rangle$-module $ P\otimes_AA/\langle e \rangle$ is projective.

\begin{prop}\cite[Proposition 1.1]{apt}\label{apt1}
 Let $A$ be a finite-dimensional algebra and let $e$ be an idempotent of $A$. Define the functor $F:=\mathrm{Hom}_A(A/\langle e \rangle,-)$ from $\mathrm{mod}(A)$ to $\mathrm{mod}(A/\langle e \rangle)$. Let $M$ be an $A$-module with minimal injective coresolution $$0\rightarrow M\rightarrow I_0\rightarrow \cdots\rightarrow I_d.$$ Then the following are equivalent:
\begin{enumerate}
\item The beginning of a minimal injective coresolution of $F(M)$ in $\mathrm{mod}(A/\langle e \rangle)$ is  $$0\rightarrow F(M)\rightarrow F(I_0)\rightarrow \cdots \rightarrow F(I_d).$$
\item $\mathrm{Ext}^i_A(A/\langle e \rangle,M)=0$ for all $1\leq i\leq d$.
\item For any $N\in\mathrm{mod}(A/\langle e \rangle)$ and for all $1\leq i\leq d$ there are isomorphisms $\mathrm{Ext}^i_A(N,M)\cong \mathrm{Ext}^i_{A/\langle e \rangle}(N,F(M))$ 
\end{enumerate}
\end{prop}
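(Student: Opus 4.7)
The plan is to establish the cycle $(1)\Rightarrow(3)\Rightarrow(2)\Rightarrow(1)$. The main tool is the adjunction $\mathrm{Hom}_A(N,X)\cong\mathrm{Hom}_{A/\langle e\rangle}(N,F(X))$ valid for every $N\in\mathrm{mod}(A/\langle e\rangle)$ and every $A$-module $X$, reflecting the fact that $F$ is right adjoint to the inclusion $\mathrm{mod}(A/\langle e\rangle)\hookrightarrow\mathrm{mod}(A)$. Its right derived functors are $R^iF=\mathrm{Ext}^i_A(A/\langle e\rangle,-)$, which vanish on injective $A$-modules; moreover $F$ sends injective $A$-modules to injective $A/\langle e\rangle$-modules, as recalled before the statement.

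For $(1)\Rightarrow(3)$, I would compute $\mathrm{Ext}^i_{A/\langle e\rangle}(N,F(M))$ by applying $\mathrm{Hom}_{A/\langle e\rangle}(N,-)$ to the coresolution given in (1), extending by one injective term on the right when $i=d$. The adjunction identifies the resulting cochain complex term-by-term with $\mathrm{Hom}_A(N,-)$ applied to $0\to I_0\to\cdots\to I_d$, whose cohomology computes $\mathrm{Ext}^i_A(N,M)$. For $(3)\Rightarrow(2)$, specialise to $N=A/\langle e\rangle$, which is projective over itself, so $\mathrm{Ext}^i_{A/\langle e\rangle}(A/\langle e\rangle,F(M))=0$ for all $i\geq 1$, and (3) transports this vanishing to (2).

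The main work is $(2)\Rightarrow(1)$. I would split the given coresolution into short exact sequences $0\to K_{j-1}\to I_j\to K_j\to 0$ with $K_{-1}=M$. Since $\mathrm{Ext}^i_A(A/\langle e\rangle,I_j)=0$ for $i\geq 1$, dimension-shifting yields $\mathrm{Ext}^i_A(A/\langle e\rangle,K_{j})\cong\mathrm{Ext}^{i+j+1}_A(A/\langle e\rangle,M)$, and the vanishing hypothesis forces these to vanish in the range relevant for the coresolution up to degree $d$. Applying $F$ therefore produces short exact sequences $0\to F(K_{j-1})\to F(I_j)\to F(K_j)\to 0$, which splice to yield the exactness of $0\to F(M)\to F(I_0)\to\cdots\to F(I_d)$; injectivity of each $F(I_j)$ is already known, so what remains is minimality. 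For that I would check that $F(K_{j-1})\hookrightarrow F(I_j)$ is an essential extension of $A/\langle e\rangle$-modules: any nonzero $A/\langle e\rangle$-submodule $N\subseteq F(I_j)$ is in particular a nonzero $A$-submodule of $I_j$, so by minimality of the original coresolution $N\cap K_{j-1}\neq 0$, and this intersection is itself an $A/\langle e\rangle$-module, hence lies in $F(K_{j-1})$.

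The main obstacle is this minimality verification, since minimality is not a formal consequence of the Ext machinery that gives exactness and injectivity; it requires transferring essentialness across the change of rings. The observation in the previous paragraph handles it uniformly at every step, so the remainder of the proof is bookkeeping of long exact sequences and adjunctions.
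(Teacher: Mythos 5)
The paper does not prove this proposition; it is imported verbatim from Auslander--Platzeck--Todorov \cite[Proposition 1.1]{apt}, so there is no in-paper argument to compare against. Your outline is, as far as I can tell, essentially the standard one (adjunction between $F$ and the exact inclusion $\mathrm{mod}(A/\langle e\rangle)\hookrightarrow\mathrm{mod}(A)$; $F$ preserves injectives; $R^iF=\mathrm{Ext}^i_A(A/\langle e\rangle,-)$; dimension shifting along the cosyzygy sequences; transfer of essentiality along $F$), and the cyclic organization $(1)\Rightarrow(3)\Rightarrow(2)\Rightarrow(1)$ is a sensible way to package it. Your treatment of $(3)\Rightarrow(2)$ (specialise to $N=A/\langle e\rangle$, projective over itself) and of $(2)\Rightarrow(1)$ (split into short exact sequences, use the vanishing to keep them exact after applying $F$, then check essentiality by observing that an $A/\langle e\rangle$-submodule of $F(I_j)$ is literally an $A$-submodule of $I_j$ annihilated by $\langle e\rangle$) are both correct and complete.

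The one step you should tighten is $(1)\Rightarrow(3)$ in the top degree $i=d$. You propose to ``extend by one injective term on the right,'' but the $(d+1)$-st term $J_{d+1}$ of the minimal injective coresolution of $F(M)$ over $A/\langle e\rangle$ is not a priori $F(I_{d+1})$, so the adjunction does not identify $\mathrm{Hom}_{A/\langle e\rangle}(N,J_{d+1})$ with $\mathrm{Hom}_A(N,I_{d+1})$, and the degree-$d$ cohomologies of the two complexes are not matched term-by-term as your sketch asserts. Concretely, the kernel of $\mathrm{Hom}_{A/\langle e\rangle}(N,F(I_d))\to\mathrm{Hom}_{A/\langle e\rangle}(N,J_{d+1})$ is $\mathrm{Hom}(N,\Sigma_d)$ where $\Sigma_d$ is the $d$-th cosyzygy of $F(M)$, whereas the kernel of $\mathrm{Hom}_A(N,I_d)\to\mathrm{Hom}_A(N,I_{d+1})$ corresponds under adjunction to $\mathrm{Hom}(N,F(K_{d-1}))$, and one only knows $\Sigma_d\subseteq F(K_{d-1})$ a priori; closing this gap is precisely where the content of $\mathrm{Ext}^d_A(A/\langle e\rangle,M)=0$ enters. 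You should either route this implication through $(1)\Rightarrow(2)\Rightarrow(3)$, where the dimension-shift isomorphism $\mathrm{Ext}^1_{A/\langle e\rangle}(N,F(K_{j-1}))\cong\mathrm{Ext}^1_A(N,K_{j-1})$ handles each degree without ever invoking a $(d+1)$-st injective term, or supply an explicit argument that in degree $d$ the two cohomologies agree. As written the sketch glosses over this point, which you correctly identify as the only non-formal spot but then dispatch too quickly.
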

This leads to the following result.
\begin{prop}\cite[Proposition 1.3]{apt}\label{apt2}
 Let $A$ be a finite-dimensional algebra and let $e$ be an idempotent of $A$. Let $d\geq 1$ be a positive integer. Then the following conditions are equivalent:
\begin{enumerate}
\item The ideal $\langle e \rangle$ is $d$-idempotent.
\item $\mathrm{Ext}^i_A(A/\langle e \rangle, M)=0$ for all $A/\langle e \rangle$-modules $M$ and all $1\leq i\leq d$.
\item $\mathrm{Ext}^i_A(A/\langle e \rangle, I)=0$ for all injective $A/\langle e \rangle$-modules $I$ and all $1\leq i\leq d$.
\end{enumerate}
\end{prop}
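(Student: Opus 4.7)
The plan is to establish the cycle $(1) \Rightarrow (2) \Rightarrow (3) \Rightarrow (2) \Rightarrow (1)$, where $(2)\Rightarrow(3)$ is obvious (injective $A/\langle e\rangle$-modules are a subclass of all $A/\langle e\rangle$-modules) and $(1)\Rightarrow(2)$ is essentially formal: since $A/\langle e\rangle$ is projective over itself, $\mathrm{Ext}^i_{A/\langle e\rangle}(A/\langle e\rangle,M)=0$ for all $i\geq 1$ and all $A/\langle e\rangle$-modules $M$, and the defining Ext-isomorphisms of $d$-idempotence transport this vanishing to $\mathrm{Ext}^i_A(A/\langle e\rangle,M)$ for $1\leq i\leq d$.

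The substantial direction is $(3)\Rightarrow(2)$, which I would prove by induction on $i$ using a one-step injective resolution in $\mathrm{mod}(A/\langle e\rangle)$. Given an $A/\langle e\rangle$-module $N$, embed it as $0\to N\to J\to N'\to 0$ with $J$ an injective $A/\langle e\rangle$-module and $N'$ the cokernel, which is again an $A/\langle e\rangle$-module. Applying $\mathrm{Hom}_A(A/\langle e\rangle,-)$ and using the canonical identification $\mathrm{Hom}_A(A/\langle e\rangle,X)=X$ when $X$ is an $A/\langle e\rangle$-module, the induced map on degree-zero Ext is the surjection $J\twoheadrightarrow N'$; combined with $\mathrm{Ext}^1_A(A/\langle e\rangle,J)=0$ from (3), the long exact sequence yields $\mathrm{Ext}^1_A(A/\langle e\rangle,N)=0$, giving the base case. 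For the inductive step, suppose the vanishing holds in degree $i-1$ for every $A/\langle e\rangle$-module; the long exact sequence contains
$$\mathrm{Ext}^{i-1}_A(A/\langle e\rangle,N')\to \mathrm{Ext}^i_A(A/\langle e\rangle,N)\to \mathrm{Ext}^i_A(A/\langle e\rangle,J),$$
where the left term vanishes by induction (since $N'$ is an $A/\langle e\rangle$-module) and the right term vanishes by (3), provided $i\leq d$. This sandwich gives the desired vanishing in degree $i$.

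Finally, $(2)\Rightarrow(1)$ is an invocation of Proposition \ref{apt1}: for any $A/\langle e\rangle$-module $N$, condition (2) gives exactly hypothesis (2) of that proposition, so its condition (3) yields $\mathrm{Ext}^i_A(M,N)\cong \mathrm{Ext}^i_{A/\langle e\rangle}(M,F(N))$ for all $A/\langle e\rangle$-modules $M$ and $0\leq i\leq d$. Since $F(N)=\mathrm{Hom}_A(A/\langle e\rangle,N)=N$ when $N$ is an $A/\langle e\rangle$-module, this is precisely the defining property of $d$-idempotence.

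The main obstacle is the induction in $(3)\Rightarrow(2)$, specifically making sure that the induction hypothesis is stated uniformly over \emph{all} $A/\langle e\rangle$-modules (so that it can be re-applied to the cokernel $N'$) while only using the vanishing from (3) on the injective middle term $J$; the base case, which hinges on the surjectivity of the connecting map $\mathrm{Hom}_A(A/\langle e\rangle,J)\to\mathrm{Hom}_A(A/\langle e\rangle,N')$, is where the passage from arbitrary modules to injective ones is actually acquired.
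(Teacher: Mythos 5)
This proposition is quoted in the paper with a citation to Auslander--Platzeck--Todorov (\cite[Proposition 1.3]{apt}); the paper does not reproduce a proof, so there is no in-paper argument to compare against. Your reconstruction is correct and is essentially the standard dimension-shifting argument that the cited source uses: $(1)\Rightarrow(2)$ transports the trivial vanishing of $\mathrm{Ext}^i_{A/\langle e\rangle}(A/\langle e\rangle,-)$ across the defining isomorphisms, $(2)\Rightarrow(3)$ is a specialization, $(3)\Rightarrow(2)$ is the induction on $i$ via $0\to N\to J\to N'\to 0$ with $J$ injective over $A/\langle e\rangle$ (the base case correctly exploits that $\mathrm{Hom}_A(A/\langle e\rangle,-)$ is the identity on $A/\langle e\rangle$-modules, so the map into the cokernel is onto), and $(2)\Rightarrow(1)$ is a direct invocation of Proposition \ref{apt1}(2)$\Rightarrow$(3) together with $F(N)=N$ for $A/\langle e\rangle$-modules and the trivial degree-zero case. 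The logic is sound and complete.
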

This has the following implication for $d$-homological pairs.
\begin{cor}\label{adapt}
Let $(A,\mathcal{C})$ be a $d$-homological pair such that $\mathrm{gl.dim}(A)\leq d$. Suppose that $A/\langle e \rangle\in \mathcal{C}$, and that for any injective $A/\langle e \rangle$-module $I$, both $I\in\mathcal{C}$ and $\mathrm{Ext}^d_A(A/\langle e \rangle, I)=0$ . Then the following hold:
\begin{enumerate}
\item The ideal $\langle e \rangle$ is $(d+1)$-idempotent.
\item $\mathrm{gl.dim}(A/\langle e \rangle)\leq d$.
\end{enumerate}
\end{cor}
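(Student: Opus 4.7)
The plan is to deduce part (1) from Proposition \ref{apt2}(3), and then use part (1) together with Proposition \ref{apt1} to establish part (2). The main technical point in part (1) is checking the Ext-vanishing at the single exponent $i = d$, since the other degrees are handled either by the defining axioms of a $d$-cluster-tilting subcategory or by the global dimension hypothesis.

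For part (1), by Proposition \ref{apt2} (applied with ``$d$'' there equal to $d+1$) it suffices to show that $\mathrm{Ext}^i_A(A/\langle e\rangle, I) = 0$ for every injective $A/\langle e\rangle$-module $I$ and every $1 \leq i \leq d+1$. I would split this into three ranges. For $1 \leq i \leq d-1$, both $A/\langle e\rangle$ and $I$ lie in $\mathcal{C}$ by hypothesis, so the vanishing is immediate from the $d$-cluster-tilting property of $\mathcal{C}$. For $i = d$, the vanishing is given as a hypothesis of the corollary. For $i = d+1$, the vanishing is forced by the assumption $\mathrm{gl.dim}(A) \leq d$. This exhausts the required range.

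For part (2), fix an arbitrary $A/\langle e\rangle$-module $M$ and take a minimal injective coresolution of $M$ as an $A$-module
\[
0 \rightarrow M \rightarrow I_0 \rightarrow I_1 \rightarrow \cdots \rightarrow I_d \rightarrow 0,
\]
which is of length at most $d$ because $\mathrm{gl.dim}(A) \leq d$. By part (1), the ideal $\langle e\rangle$ is $(d+1)$-idempotent, so by Proposition \ref{apt2} we have $\mathrm{Ext}^i_A(A/\langle e\rangle, M) = 0$ for all $1 \leq i \leq d+1$, and in particular for $1 \leq i \leq d$. Thus $M$ satisfies condition (2) of Proposition \ref{apt1}, so condition (1) of that proposition applies: the sequence obtained by applying $F = \mathrm{Hom}_A(A/\langle e\rangle, -)$ is the beginning of a minimal injective coresolution of $F(M) = M$ in $\mathrm{mod}(A/\langle e\rangle)$. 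Since the original $A$-resolution terminates after at most $d$ steps, so does its image under $F$, giving $\mathrm{inj.dim}_{A/\langle e\rangle}(M) \leq d$. As $M$ was arbitrary, $\mathrm{gl.dim}(A/\langle e\rangle) \leq d$.

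The only step that really uses the specific geometry of the situation is the $i = d$ case in part (1), which is exactly the hypothesis one is forced to impose because neither cluster-tilting nor the global dimension bound covers that degree. Everything else is bookkeeping via the Auslander--Platzeck--Todorov machinery recalled in Propositions \ref{apt1} and \ref{apt2}.
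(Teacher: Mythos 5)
Your proof of part (1) follows the paper exactly: split the range $1 \leq i \leq d+1$ into the three sub-ranges covered by the cluster-tilting axiom, the explicit hypothesis at degree $d$, and the global dimension bound, then invoke Proposition~\ref{apt2}.

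For part (2) you take a genuinely different route. The paper uses the definition of $(d+1)$-idempotent directly: for all $M, N \in \mathrm{mod}(A/\langle e\rangle)$ one has $\mathrm{Ext}^{d+1}_{A/\langle e\rangle}(M,N) \cong \mathrm{Ext}^{d+1}_A(M,N) = 0$ because $\mathrm{gl.dim}(A) \leq d$, and this alone forces $\mathrm{gl.dim}(A/\langle e\rangle) \leq d$. That is a one-line argument. You instead push a minimal $A$-injective coresolution of $M$ through $F = \mathrm{Hom}_A(A/\langle e\rangle,-)$ via Proposition~\ref{apt1}. This works, but it is worth flagging a small gap in the final step. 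Proposition~\ref{apt1}(1), applied with the same parameter $d$, only tells you that $0 \to F(M) \to F(I_0) \to \cdots \to F(I_d)$ is the \emph{beginning} of a minimal injective coresolution of $F(M)$; it does not by itself say that the coresolution stops at $F(I_d)$, i.e.\ that $F(I_{d-1}) \to F(I_d)$ is onto. Your claim ``since the original resolution terminates after at most $d$ steps, so does its image under $F$'' therefore needs a justification. The cleanest fix is to exploit the vanishing you already established: extend the $A$-coresolution by $I_{d+1} = 0$ and apply Proposition~\ref{apt1} with parameter $d+1$, using $\mathrm{Ext}^i_A(A/\langle e\rangle, M) = 0$ for $1 \leq i \leq d+1$; then the beginning of the minimal coresolution of $F(M)$ is $0 \to F(M) \to F(I_0) \to \cdots \to F(I_d) \to 0$, giving $\mathrm{inj.dim}_{A/\langle e\rangle}(M) \leq d$ directly. (Equivalently, a dimension shift shows $\mathrm{Ext}^1_A(A/\langle e\rangle, C_{d-1}) \cong \mathrm{Ext}^d_A(A/\langle e\rangle, M) = 0$ where $C_{d-1}$ is the penultimate cosyzygy, so $F$ is exact on the last short exact sequence.) With this repair your argument is correct, just longer than the paper's, at the cost of no extra generality; the one thing it does provide that the paper's proof does not make explicit is that minimal $A$-injective coresolutions of $A/\langle e\rangle$-modules descend termwise to minimal $A/\langle e\rangle$-injective coresolutions.
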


\begin{proof}
For any injective $A/\langle e \rangle$-module $I$, since both $A/\langle e \rangle\in \mathcal{C}$ and $I\in\mathcal{C}$ we have that $\mathrm{Ext}^i_A(A/\langle e \rangle, I)=0$ for all $0<i<d$. By assumption we also have that $\mathrm{Ext}^d_A(A/\langle e \rangle, I)=0$. Finally $\mathrm{gl.dim}(A)\leq d$ implies $\mathrm{Ext}^i_A(A/\langle e \rangle, I)=0$ for all $i>d$. By Proposition \ref{apt2}, this means the ideal $\langle e\rangle $ is $(d+1)$-idempotent. A consequence is that $\mathrm{Ext}^{d+1}_A(M,N)=\mathrm{Ext}^{d+1}_{A/\langle e \rangle}(M,N)=0$ for all $M,N\in\mathrm{mod}(A/\langle e \rangle)$. Therefore $\mathrm{gl.dim}(A/\langle e \rangle)\leq d$.
\end{proof}

Since $\langle e \rangle$ is $(d+1)$-idempotent, it would not be unreasonable to expect that $\mathcal{C}\cap\mathrm{mod}(A/\langle e\rangle)$ is a $d$-cluster-tilting subcategory of $\mathrm{mod}(A/\langle e\rangle)$. In fact the construction of higher Nakayama algebras relies on this property (see the usage of \cite[Lemma 1.20]{jk}. More generally, the following is true.

\begin{defn}\label{psupp}
 Let $(A,\mathcal{C})$ be a $d$-homological pair such that $\mathrm{gl.dim}(A)\leq d$. Then a subcategory $\mathcal{T}\subseteq \mathcal{C}$ is \emph{properly supported} and the idempotent $e_{\mathcal{T}}$ \emph{properly supporting} (for the idempotent $e_{\mathcal{T}}$ defined in equation \eqref{supp1}) if:
\begin{enumerate}
\item $\mathrm{Ext}^d_A(A/\langle e_\mathcal{T} \rangle, I)=0$ for all injective $A/\langle e_\mathcal{T} \rangle$-modules.
\item $\mathcal{C}\cap\mathrm{mod}(A/\langle e_\mathcal{T} \rangle)$ is a $d$-cluster-tilting subcategory of $\mathrm{mod}(A/\langle e_\mathcal{T}\rangle)$ such that \begin{align*}
\mathcal{C}\cap\mathrm{mod}(A/\langle e_\mathcal{T} \rangle)&\cong\{\mathrm{Hom}_A(A/\langle e\rangle, M)|M\in\mathcal{C}\}\\
&\cong\{M\otimes_A A/\langle e\rangle|M\in\mathcal{C}\}.
\end{align*}\label{suppg}
\end{enumerate}
If for some $T\in\mathcal{C}$ the subcategory $\mathrm{add}(T)\subseteq \mathcal{C}$ is properly supported, then we say that $T$ is \emph{properly supported}. An idempotent $e$ is \emph{left properly supporting} if $e$ is properly supporting and $I\in\mathrm{add}(DA\otimes_A A/\langle e \rangle)$ for every injective $A/\langle e \rangle$ module $I$. Dually, a properly-supporting idempotent $e$ is \emph{right properly supporting} if there is an identification $\mathrm{add}(\mathrm{Hom}_A(A/\langle e \rangle,A)) \cong \mathrm{add}(\mathrm A/\langle e \rangle)$.
\end{defn}

Recall that for a finite-dimensional algebra $A$ and any two morphisms \\ $f:X\rightarrow M$, $g:X\rightarrow N$ between objects $M, N, X\in\mathrm{mod}(A)$, there is a \emph{pushout of $f$ and $g$} consisting of an object $P$ and morphisms $f^\prime:M\rightarrow P$, $g^\prime:N\rightarrow P$ such that $f^\prime\circ f\cong g^\prime\circ g$ and $P$ is universal with this property: for any $P_1$ and morphisms $f_1:M\rightarrow P_1$, $g_1:N\rightarrow P_1$ such that $f_1\circ f\cong g_1\circ g$, then $f_1$ factors through $f^\prime$ and $g_1$ factors through $g^\prime$.
 One property of the pushout is that there is an exact sequence $$X\rightarrow M\oplus N\rightarrow P.$$ More generally, in a functorially-finite subcategory $\mathcal{C}\subseteq \mathrm{mod}(A)$ and any two morphisms $f:X\rightarrow M$, $g:X\rightarrow N$ between objects $M, N, X\in \mathcal{C}$, the property of being preenveloping implies that there is an object $P\in \mathcal{C}$ and morphisms $f^\prime:M\rightarrow P$, $g^\prime:N\rightarrow P$ such that $f^\prime\circ f\cong g^\prime\circ g$, that $P$ is universal with this property and there is an exact sequence $$X\rightarrow M\oplus N\rightarrow P.$$

Returning to the category $\mathrm{mod}(A)$, if $P$ is the pushout of two morphisms $f$ and $g$ such that $f$ is an injective morphism, then there is a commutative diagram 
$$\begin{tikzcd}
0\arrow{r}&X\arrow{r}{f}\arrow{d}{g}&M\arrow{r}\arrow{d}&C\arrow{r}\arrow[equals]{d}&0\\
0\arrow{r}&N\arrow{r}&P\arrow{r}&C\arrow{r}&0
\end{tikzcd}$$ for some $C\in \mathrm{mod}(A)$. This concept (and the dual notion of a pullback) was generalised to $d$-pushout and $d$-pullback diagrams in \cite{jasso}, see also the survey article \cite{jkv}. First, an exact sequence is \emph{$d$-exact} if it can be written in the form $$0\rightarrow M_0\rightarrow M_1\rightarrow \cdots \rightarrow M_{d+1}\rightarrow 0.$$ The result we need is the following:

\begin{prop}\cite[Proposition 3.8]{jasso}\label{jasso}
Let $(A,\mathcal{C})$ be a $d$-homological pair. For any $d$-exact sequence in $\mathcal{C}$
$$0\rightarrow X_0\rightarrow X_1\rightarrow\cdots \rightarrow X_{d+1}\rightarrow 0$$ and any morphism $f:X_0\rightarrow Y_0$ there exists a commutative diagram in $\mathcal{C}$:

$$\begin{tikzcd}
0\arrow{r}&X_0\arrow{r}\arrow{d}{f}&X_1\arrow{r}\arrow{d}&\cdots \arrow{r}&X_{d}\arrow{d}\arrow{r}&X_{d+1}\arrow{r}\arrow[equals]{d}&0\\
0\arrow{r}&Y_0\arrow{r}&Y_1\arrow{r}&\cdots \arrow{r}&Y_{d}\arrow{r}&X_{d+1}\arrow{r}&0
\end{tikzcd}$$
such that there is an induced $d$-exact sequence
$$0\rightarrow X_0\rightarrow X_1\oplus Y_0\rightarrow X_2\oplus Y_1\rightarrow \cdots\rightarrow X_d\oplus Y_{d-1}\rightarrow Y_d\rightarrow 0$$
\end{prop}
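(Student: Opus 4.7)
I would prove this by induction on $d$, taking the classical pushout of $f$ along the monomorphism $X_0 \to X_1$ in $\mathrm{mod}(A) = \mathcal{C}$ as the base case $d = 1$; there the induced sequence $0 \to X_0 \to X_1 \oplus Y_0 \to Y_1 \to 0$ is just the standard mapping-cylinder exact sequence.

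For the inductive step, the first move is to mimic the classical pushout inside $\mathcal{C}$. Since $\mathcal{C}$ is functorially finite, the preenveloping property supplies a weak pushout of $f$ and the first differential $d_0 \colon X_0 \to X_1$ in $\mathcal{C}$, yielding an object $Y_1 \in \mathcal{C}$ together with morphisms $Y_0 \to Y_1$ and $X_1 \to Y_1$ making the first square of the target diagram commute, and for which there is an exact sequence $X_0 \to X_1 \oplus Y_0 \to Y_1 \to 0$. Injectivity of $d_0$ upgrades this to a short exact sequence with $X_0$ on the left, so that setting $C_1 := \mathrm{coker}(d_0)$ we obtain $C_1 \cong \mathrm{coker}(Y_0 \to Y_1)$, and by $d$-exactness of the original sequence $C_1$ embeds into $X_2$ as the image of the next differential.

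Splicing $0 \to Y_0 \to Y_1 \to C_1 \to 0$ with $0 \to C_1 \to X_2 \to \cdots \to X_{d+1} \to 0$ produces an exact sequence $0 \to Y_0 \to Y_1 \to X_2 \to \cdots \to X_{d+1} \to 0$ in $\mathrm{mod}(A)$ whose initial segment lies in $\mathcal{C}$. The plan is now to iterate: at the $i$-th stage, apply the analogous weak-pushout construction in $\mathcal{C}$ to the differential out of the most recently produced $Y_i$ in order to replace the next $X_{i+1}$ by an object $Y_{i+1} \in \mathcal{C}$ while preserving the tail. After $d-1$ further iterations one obtains the desired bottom row $0 \to Y_0 \to Y_1 \to \cdots \to Y_d \to X_{d+1} \to 0$ in $\mathcal{C}$ together with the required chain map from the top row. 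The induced $d$-exact sequence $0 \to X_0 \to X_1 \oplus Y_0 \to X_2 \oplus Y_1 \to \cdots \to X_d \oplus Y_{d-1} \to Y_d \to 0$ is then the totalisation (mapping cone) of this chain map, whose exactness one verifies degree by degree using the snake lemma applied to the short exact sequences produced at each weak-pushout step.

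The main obstacle is precisely the iteration: showing that the successive weak pushouts in $\mathcal{C}$ genuinely assemble into a $d$-exact sequence rather than merely a complex, and that the resulting mapping cone is itself $d$-exact. Here the defining Ext-vanishing $\mathrm{Ext}^j_A(\mathcal{C},\mathcal{C}) = 0$ for $0 < j < d$ of a $d$-cluster-tilting subcategory is essential, since it controls the intermediate obstructions to exactness at the newly introduced objects $Y_i \in \mathcal{C}$. Once these obstructions are shown to vanish, the rest reduces to routine diagram-chasing with the short exact sequences furnished by the individual weak-pushout squares.
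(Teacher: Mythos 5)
The paper does not prove this statement itself: Proposition \ref{jasso} is cited verbatim from Jasso, so there is no in-paper proof to compare against, and I evaluate the proposal against the argument in the cited source.

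Your outline has a genuine gap at its first inductive step, and the gap propagates. You construct $Y_1$ as a ``weak pushout'' of $f$ and $d_0\colon X_0\to X_1$ inside $\mathcal{C}$ via preenvelopes, and assert that $X_0\to X_1\oplus Y_0\to Y_1\to 0$ is exact. That terminal surjectivity is not available: the weak pushout in $\mathcal{C}$ is a $\mathcal{C}$-preenvelope of the cokernel of $X_0\to X_1\oplus Y_0$ taken in $\mathrm{mod}(A)$, and preenvelopes are not epimorphisms in general. Indeed the paper's own recollection of this construction in a functorially finite subcategory is careful to state only that $X\to M\oplus N\to P$ is exact, with no trailing ``$\to 0$''. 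Without surjectivity onto $Y_1$ you cannot identify $\mathrm{coker}(Y_0\to Y_1)$ with $C_1=\mathrm{coker}(d_0)$, so the splicing step does not go through; moreover the intermediate cokernels you want to splice through live in $\mathrm{mod}(A)$ and need not be in $\mathcal{C}$, so the inductive hypothesis does not even apply to them. You flag this iteration problem yourself as ``the main obstacle'' and then assume it is overcome by the Ext-vanishing; since that is essentially what the proposition asserts, the argument as written is circular.

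The route Jasso actually takes is not to build $Y_1,\ldots,Y_d$ one at a time by repeated pushouts, but to take a $d$-cokernel all at once: since $d_0$ is a monomorphism, so is $(d_0,-f)\colon X_0\to X_1\oplus Y_0$, and the $d$-abelian axioms satisfied by a $d$-cluster-tilting subcategory guarantee that this monomorphism has a $d$-cokernel $Z_1\to\cdots\to Z_d$ lying entirely in $\mathcal{C}$, giving a $d$-exact sequence $0\to X_0\to X_1\oplus Y_0\to Z_1\to\cdots\to Z_d\to 0$. One then applies the comparison lemma for $d$-cokernels to produce the commutative ladder ending in the identity on $X_{d+1}$ and to massage the $Z_i$, up to homotopy equivalence, into the shape $X_{i+1}\oplus Y_i$ and $Y_d$; the ladder and the mapping-cone sequence then fall out together. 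If you want to salvage your approach, replace the ad hoc iterated weak pushouts with this single $d$-cokernel construction and use the comparison lemma rather than the snake lemma.
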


The commutative diagram 
$$\begin{tikzcd}
X_0\arrow{r}\arrow{d}{f}&X_1\arrow{r}\arrow{d}&\cdots \arrow{r}&X_{d}\arrow{d}\\
Y_0\arrow{r}&Y_1\arrow{r}&\cdots \arrow{r}&Y_{d}
\end{tikzcd}$$
is a \emph{$d$-pushout diagram}. Dually, there is the notion of a \emph{$d$-pullback diagram}. For an algebra $A$ with global dimension $d$ any any two $A$-modules $M$ and $N$, then as in \cite[Theorem 2.3.1]{iy1} there is an isomorphism $$\mathrm{Hom}_A(M,\tau_d(N))\cong \mathrm{Ext}_A^d(N,M).$$ 
For a $d$-representation-finite algebra $A$ with $d$-cluster-tilting subcategory $\mathcal{C}$, there is an alternative way of phrasing this isomorphism. For any $M,N\in\mathcal{C}$ and any non-zero morphism $f\in\mathrm{Hom}(M,\tau_d(N))$, then a $d$-pushout diagram induces the following commutative diagram:

$$\begin{tikzcd}
0\arrow{r}&M\arrow{r}\arrow{d}{f}&X_1\arrow{r}\arrow{d}&\cdots \arrow{r}&X_d \arrow{r}\arrow{d}&N\arrow[equals]{d}\arrow{r}&0\\
0\arrow{r}&\tau_d(N)\arrow{r}&Y_1\arrow{r}&\cdots \arrow{r}&Y_{d}\arrow{r}&N\arrow{r}&0
\end{tikzcd}$$
 So Proposition \ref{jasso} implies that for any modules $M,N\in\mathcal{C}$, then any $d$-exact sequence  $$0\rightarrow M\rightarrow E_1\rightarrow\cdots\rightarrow E_d\rightarrow N\rightarrow 0$$ is Yoneda equivalent to a $d$-exact sequence 
  $$0\rightarrow N\rightarrow X_1 \rightarrow \cdots \rightarrow X_d\rightarrow M\rightarrow 0$$ such that $X_1,\ldots, X_d\in \mathcal{C}$.

\section{Almost directed subcategories and resonance diagrams}
We desire a version of directedness in order to be able to generalise torsion classes.

\begin{defn}\label{almost} 
A $d$-homological pair $(A,\mathcal{C})$ is \emph{almost directed} if the following conditions are satisfied.
\begin{enumerate}
\item The algebra $A$ is given by a quiver with relations whose commutativity relations and zero relations are generated by paths of length two. \label{nak0}
\item For any indecomposable modules $M,N\in\mathcal{C}$ and any integer $i\geq 1$, then $\mathrm{dim}(\mathrm{Ext}_A^i(M,N))\leq 1$.\label{nak1}
\item For any $M\in\mathcal{C}$, there exists a left properly-supporting idempotent $e$ such that either $M$ is projective as an $A/\langle e \rangle$-module, or for any \\ $N\in\mathcal{C}\cap\mathrm{mod}(A/\langle e \rangle)$ then $\mathrm{Hom}_A(M,N)\ne 0$ implies that $N$ is injective as an $A$-module. \label{nak2}
\item For any $M\in\mathcal{C}$, there exists a right properly-supporting idempotent $e$ such that either $M$ is injective as an $A/\langle e \rangle$-module, or for any \\ $N\in\mathcal{C}\cap\mathrm{mod}(A/\langle e \rangle)$ then $\mathrm{Hom}_A(N,M)\ne 0$ implies that $N$ is projective as an $A$-module. \label{nak6}
\end{enumerate}
\end{defn}

Observe that if a $d$-homological pair $(A,\mathcal{C})$ is almost directed, then for any properly-supporting idempotent $e$, then the $d$-homological pair $(A/\langle e\rangle,\mathcal{C} \cap\mathrm{mod}(A/\langle e\rangle))$ is almost directed. A wide range of algebras with $d$-cluster-tilting subcategories that do not satisfy property (\ref{nak0}) can be found in \cite{vaso2}.  

\begin{prop}\label{resonance}
Let $(A,\mathcal{C})$ be an almost-directed $d$-homological pair such that $\mathrm{gl.dim}(A)\leq d$. Suppose that 
$$\phi_M:0\rightarrow M_0\rightarrow M_1\rightarrow \cdots \rightarrow M_{d+1}\rightarrow 0$$ 
$$\phi_N:0\rightarrow N_0\rightarrow N_1\rightarrow \cdots \rightarrow N_{d+1}\rightarrow 0$$ are $d$-exact sequences in $\mathcal{C}$ such that $M_i=N_j$ for some $0\leq i\leq j\leq d+1$.  Then there exists a commutative diagram containing $\phi_M$, $\phi_N$ and $(d-1)$ many other $d$-exact sequences $$\phi_k:X_{0,k}\rightarrow X_{1,k}\rightarrow \cdots \rightarrow X_{d+1,k}\rightarrow 0$$ where $1\leq k\leq d-1$ and such that each module $X_{l,k}$ is a term in either $\phi_M$, $\phi_N$ or $\phi_m$ for some $1\leq m\leq d-1$ (where $l\ne m$) and this is distinct for each $0\leq l\leq d+1$.
\end{prop}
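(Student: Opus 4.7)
My plan is to construct the $d-1$ additional sequences $\phi_1,\ldots,\phi_{d-1}$ iteratively via $d$-pushouts, using Proposition \ref{jasso} as the main engine. The identification $M_i = N_j$ provides the seed: it is an identity morphism that I can plug into Proposition \ref{jasso} after some repositioning of the two sequences.

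The first step is to exploit the shared term to produce a comparison map between (truncations of) $\phi_M$ and $\phi_N$. By the Yoneda-equivalence remark following Proposition \ref{jasso}, I may convert either $d$-exact sequence into its reversed form via $\tau_d$, so up to relabeling I can position the shared module at a chosen endpoint of one of the sequences. In this configuration, the identity $M_i = N_j$ extends, via Proposition \ref{jasso}, to a morphism of $d$-exact sequences, and the induced $d$-exact sequence in the conclusion of that proposition becomes the first new sequence $\phi_1$, embedded in a commutative diagram that also contains $\phi_M$ and $\phi_N$.

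I then iterate: each newly built $\phi_k$ shares at least one term with the already-constructed sequences, and these shared terms supply further morphisms to which Proposition \ref{jasso} applies. After $d-1$ iterations, all required $\phi_k$'s are present and fit into a single commutative diagram. To finish I verify commutativity and the distinctness condition using almost-directedness. Condition (\ref{nak1}), $\dim \mathrm{Ext}^i(M,N)\leq 1$, is the key input: it pins down each $d$-exact sequence up to scalar by its endpoints, so the iteratively-constructed sequences are essentially canonical and the resonance pattern emerges automatically. Conditions (\ref{nak0}), (\ref{nak2}) and (\ref{nak6}) rigidify the Hom-spaces further and control the shape of the $d$-pushouts so that indecomposable summands in the constructed diagrams match those already present in $\phi_M$, $\phi_N$ or an earlier $\phi_m$.

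The principal obstacle will be the combinatorial bookkeeping of the iterative step: verifying that each term $X_{l,k}$ in the newly constructed $\phi_k$ lies in precisely one of the other sequences (and lies in distinct ones as $l$ varies over $0 \leq l \leq d+1$). This amounts to checking, at each iteration, that the $d$-pushout of Proposition \ref{jasso} does not introduce spurious direct summands and does place each $X_{l,k}$ in the intended sequence. The almost-directedness hypotheses are exactly what rule out such degenerate behavior: they ensure the $d$-exact sequences involved are determined by their endpoints, so the induced sequences of the $d$-pushouts are forced to have the modules dictated by the resonance pattern rather than arbitrary combinations.
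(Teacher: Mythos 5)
The paper's proof is not built on Proposition~\ref{jasso}. It constructs the intermediate sequences by iterated \emph{ordinary} pullbacks in $\mathcal{C}$: starting from the identification $M_i=N_j$, it forms $P_{i-1}$ as the pullback of $M_{i-1}\to M_i$ and $N_{j-1}\to N_j$, then $P_{i-2}$ as a pullback one step further back, and so on. The crux of the argument is the vanishing of the short composite $g_{i-1}\circ g_{i-2}$: one observes that a length-three composite factoring through $\phi_M$ is zero, and then invokes condition~(\ref{nak0}) of Definition~\ref{almost} (all zero relations are generated by paths of length two) to conclude that a length-two piece already vanishes, which is what makes $P_{i-2}\rightarrow P_{i-1}\rightarrow N_{j-1}$ exact and lets the induction produce the $\phi_k$'s. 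You never touch that mechanism, and your claim that condition~(\ref{nak1}) is ``the key input'' does not match how the argument actually closes.

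More seriously, your opening move contains a gap that the rest of the proposal does not repair. You want to ``position the shared module at a chosen endpoint'' by replacing $\phi_M$ or $\phi_N$ with its $\tau_d$-reversed, Yoneda-equivalent form so that $M_i=N_j$ sits where Proposition~\ref{jasso} expects a morphism of first terms. But the Yoneda-equivalence remark only controls the two \emph{endpoints}: replacing $0\to M\to E_1\to\cdots\to E_d\to N\to 0$ by a sequence $0\to N\to X_1\to\cdots\to X_d\to M\to 0$ changes all the middle terms $E_\ell$ into some new $X_\ell$. If the shared object $M_i$ is an interior term of $\phi_M$ (which is the generic case here), it need not appear at all in the reversed sequence, and you lose the very datum you were trying to exploit. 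Conversely, if $M_i=N_j$ already sits at an endpoint of both sequences, the only morphism available at that position is the identity, and the $d$-pushout of Proposition~\ref{jasso} along the identity is trivial and produces nothing new. Either way you do not get the morphism of $d$-exact sequences you need, so the iteration never starts. To make Proposition~\ref{jasso} usable you would first have to \emph{truncate} the two sequences at $M_i=N_j$ and supply the connecting morphisms between the truncations step by step --- and that is precisely the ordinary-pullback induction the paper performs, with condition~(\ref{nak0}) doing the work that guarantees the truncated pieces remain exact.
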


\begin{proof}
Because $\mathcal{C}$ is functorially finite, we may take the ``pullback" $P_{i-1}$ of $M_i=N_j$ along the morphisms $M_{i-1}\rightarrow M_i$ and $h:N_{j-1}\rightarrow N_j$ to obtain morphisms \\ $f_{i-1}:P_{i-1}\rightarrow M_{i-1}$, $g_{i-1}:P_{i-1}\rightarrow N_{j-1}$. Now take the ``pullback" $P_{i-2}$ of \\ $M_{i-1}\rightarrow M_i$ and $f_{i-1}$ to obtain morphisms $f_{i-2}:P_{i-2}\rightarrow M_{i-2}$, $g_{i-2}:P_{i-2}\rightarrow P_{i-1}$. Since the composition of morphisms $h\circ g_{i-1}\circ g_{i-2}$ factors through $f_{i-1}\circ f_{i-2}$, we must have that $h\circ g_{i-1}\circ g_{i-2}=0$. Since all zero relations are of length at most two, this means the composition $g_{i-1}\circ g_{i-2}=0$. Now, as the sequences $$P_{i-2}\rightarrow P_{i-1}\oplus M_{i-2}\rightarrow M_{i-1}$$ 
$$P_{i-1}\rightarrow N_{j-1}\oplus M_{i-1}\rightarrow M_{i}$$ are exact and $g_{i-1}\circ g_{i-2}=0$ by uniqueness of $P_{i-1}$, we must have that $$P_{i-2}\rightarrow P_{i-1}\rightarrow N_{j-1}$$ is exact. Continue this process through $\phi_M$, to $\phi_N$ and dually.  Inductively we obtain more $d$-exact sequences, and hence every induced module must be contained in two of the induced $d$-exact sequences. 
\end{proof}

A diagram as in Proposition \ref{resonance} will be called a \emph{resonance diagram}.
Fix an integer $n$ and consider the $d$-Auslander algebra $A$ of linearly oriented type $A_n$.  There is a $d$-cluster-tilting subcategory $\mathcal{C}\subseteq \mathrm{mod}(A)$ that can be described by Theorem \ref{artheory} and that has also been described further through results on higher Nakayama algebra in \cite{jk}. It follows from these descriptions that $\mathcal{C}$ is almost directed.

\begin{eg}\label{eg1}
Let $A$ be the Auslander algebra of linearly oriented type $A_3$

	$$\begin{tikzpicture}[xscale=5,yscale=2.5]
	\node(xa) at (-2,1){$1$};
	\node(xb) at (-1.6,1){$4$};
	\node(xc) at (-1.2,1){$6$};
	
	\node(xab) at (-1.8,1.2){$2$};
	\node(xac) at (-1.4,1.2){$5$};
	
	\node(xbb) at (-1.6,1.4){$3$};
	
	\draw[->](xa) edge(xab);
	\draw[->](xb) edge(xac);
	
	\draw[->](xbb) edge(xac);
		
	\draw[->](xab) edge(xb);
	\draw[->](xac) edge(xc);
	
	\draw[->](xab) edge(xbb);
	
	\draw[-,dotted](xc) edge(xb);
	\draw[-,dotted](xb) edge(xa);
	\draw[-,dotted](xac) edge(xab);

	\end{tikzpicture}$$
	and let $\mathcal{C}$ be the cluster-tilting subcategory $$\begin{matrix} 1\\2\\3\end{matrix},\ \begin{matrix} 1\\2\end{matrix},\ \begin{matrix} 1\end{matrix},\ \begin{matrix} &2&\\3&&4\\&5&\end{matrix},\ \begin{matrix} 2\\4\end{matrix},\ \begin{matrix} 3\\5\\6\end{matrix},\ \begin{matrix} 4\\5\end{matrix},\ \begin{matrix} 4\end{matrix},\ \begin{matrix} 5\\6\end{matrix},\ \begin{matrix} 6\end{matrix}.$$
	It may be easily seen that $(A,\mathcal{C})$ is an almost-directed $d$-homological pair.
	\end{eg}
	
For algebras of linearly oriented type $A_n$, we may give a much more explicit description of their resonance diagrams.

\begin{eg}\label{aresonance}
Let $A$ be a $d$-Auslander algebra of linearly oriented type $A_n$ with unique $d$-cluster-tilting subcategory $\mathcal{C}\subseteq\mathrm{mod}(A)$. Suppose that 
$$\phi_M:0\rightarrow M_0\rightarrow M_1\rightarrow \cdots \rightarrow M_{d+1}\rightarrow 0$$ 
$$\phi_N:0\rightarrow N_0\rightarrow N_1\rightarrow \cdots \rightarrow N_{d+1}\rightarrow 0$$ are $d$-exact sequences in $\mathcal{C}$ such that $M_i=N_j$ for some $0\leq i\leq j\leq d+1$. 
\begin{figure}
\caption{Resonance diagram for a $d$-Auslander algebra of linearly oriented type $A_n$}
$$\begin{tikzcd}[row sep=small, column sep=tiny]\label{resii}
X_{0,0}\arrow[]{d}\arrow{dr}\\
X_{0,1}\arrow{d}\arrow{r}&X_{1,1}\arrow{d}\arrow{dr}\\
\vdots \arrow{d}&\vdots\arrow{d}&\ddots\arrow{dr}\\
X_{0,i-1} \arrow{r}\arrow{d}&X_{1,i-1}\arrow{d}\arrow{r}&\cdots\arrow{r}&X_{i-1,i-1}\arrow{dr}\\
\vdots&\vdots &&&X_{i,i}\arrow{d}\arrow{dr}\\
\vdots\arrow{d}&\vdots\arrow{d}&&&\vdots\arrow{d}&\ddots\arrow{dr}\\
X_{0,j} \arrow{r}\arrow{d}&X_{1,j}\arrow{d}\arrow{r}&\cdots&\cdots\arrow{r}&X_{i,j}\arrow{d}\arrow{r}&\cdots\arrow{r}&X_{j,j}\arrow{dr}\\
\vdots&\vdots &&&\vdots&&&X_{j+1,j+1}\arrow{d}\arrow{dr}\\
\vdots\arrow{d}&\vdots\arrow{d}&&&\vdots\arrow{d}&&&\vdots\arrow{d}&\ddots\arrow{dr}	\\
X_{0,d+1} \arrow{r}&X_{1,d+1}\arrow{r}&\cdots&\cdots\arrow{r}&X_{i,d+1}\arrow{r}&\cdots&\cdots\arrow{r}&X_{j+1,d+1}\arrow{r}&\cdots\arrow{r}&X_{d+1,d+1}
\end{tikzcd}$$ 
\end{figure}
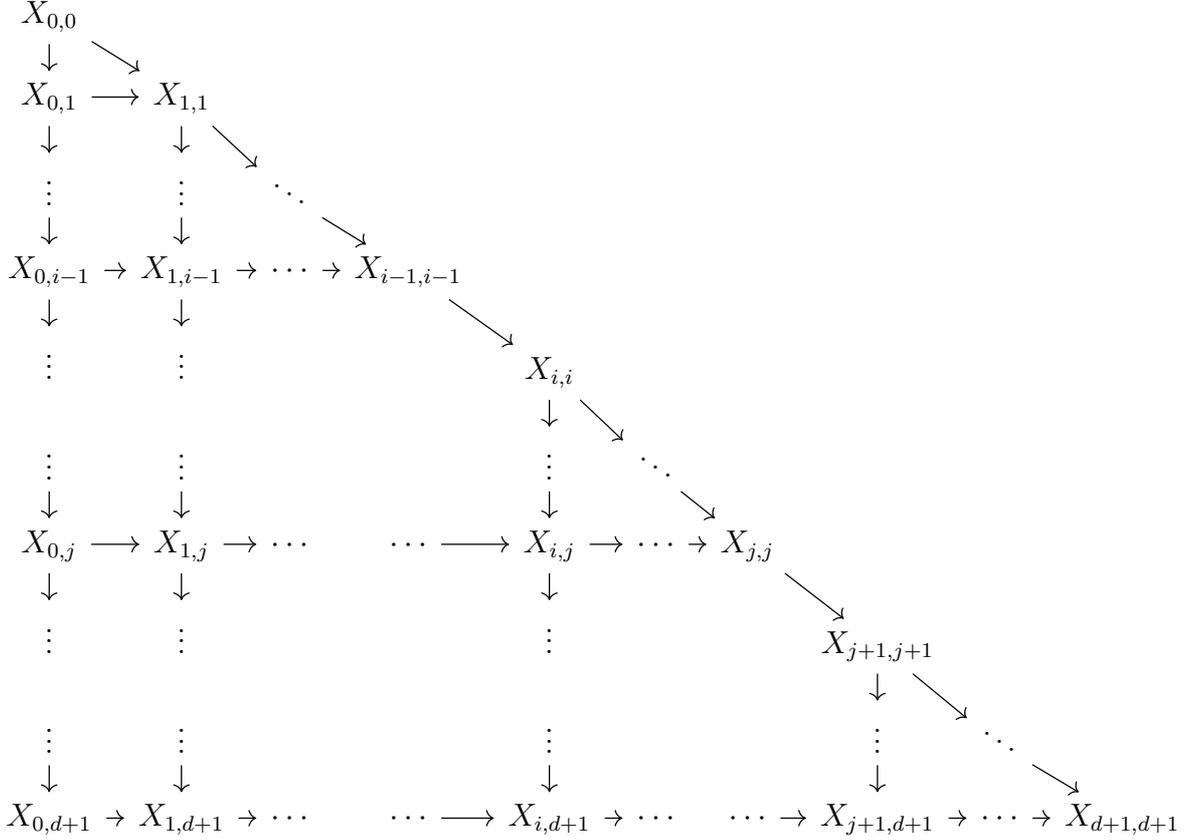  Then there is a commutative diagram as in Figure \ref{resii} 
such that 
\begin{itemize}
\item For each $0\leq k\leq d+1$ the sequence $$\phi_k:0\rightarrow X_{0,k}\rightarrow X_{1,k}\rightarrow  \cdots \rightarrow X_{k,k}\rightarrow X_{k+1,k+1}\rightarrow X_{k+1,k+2}\rightarrow \cdots \rightarrow X_{k+1,d+1}\rightarrow 0$$ is $d$-exact. \label{y1}
\item Either $\phi_M=\phi_{i-1}$ and $\phi_N=\phi_j$ or $\phi_N=\phi_{i-1}$ and $\phi_M=\phi_i$. \label{y3}
\end{itemize}
\end{eg}

\begin{remark}\label{resin}
Observe that for each $0\leq k \leq d+1$, by construction the sequence $$\phi_{k}^\prime:0\rightarrow X_{0,k}\rightarrow X_{1,k}\oplus X_{0,k+1}\rightarrow \cdots \rightarrow X_{k-1,d+1}\oplus X_{k,d}\rightarrow X_{k,d+1}\rightarrow 0$$ is $d$-exact. In fact this $d$-exact sequence is obtained via a $d$-pushout diagram of $\phi_{k-1}$ along the morphism $X_{0,k-1}\rightarrow X_{0,k}$. Likewise, it may also be obtained via a $d$-pullback diagram of $\phi_k$ along the morphism $X_{k,d+1}\rightarrow X_{k+1,d+1}$. 
\end{remark}

For an arbitrary resonance diagram we call $d$-exact sequences that arise from either a $d$-pushout diagram or a $d$-pullback diagram, or both, as \emph{intermediate $d$-exact sequences}.

\section{Support $d$-tilting modules} 

The main objective of this paper is to understand support $d$-tilting theory. Recall that an $A$-module $T$ is a \emph{pre-$d$-tilting module} \cite{ha}, \cite{miya} if:

\begin{itemize}
\item $\mathrm{proj.dim}(T)\leq d$.
\item $\mathrm{Ext}^i_A(T,T)=0$ for all $0<i\leq d$.
\end{itemize}
Then $T$ is in addition \emph{$d$-tilting} if there exists an exact sequence $$0\rightarrow A\rightarrow T_0\rightarrow T_1\rightarrow \cdots \rightarrow T_d\rightarrow 0$$ 
where $T_0,\ldots, T_d\in \mathrm{add}(T)$. 
Dually, an $A$-module $C$ is a \emph{$d$-cotilting module} if:
\begin{itemize}
\item $\mathrm{inj.dim}(C)\leq d$.
\item $\mathrm{Ext}^i_A(C,C)=0$ for all $0<i\leq d$.
\item There exists an exact sequence $$0\rightarrow C_d\rightarrow \cdots \rightarrow C_1\rightarrow C_0 \rightarrow DA\rightarrow 0$$ such that $C_0,\ldots, C_d\in\mathrm{add}(C)$.
\end{itemize} 

The notion of support-$d$-tilting follows naturally, but we will be primarily interested in modules that are properly supported.

\begin{defn}\label{suppt}
 Let $(A,\mathcal{C})$ be a $d$-homological pair such that $\mathrm{gl.dim}(A)\leq d$ and that $\mathcal{C}$ is almost directed. Then an $A$-module $T\in \mathcal{C}$ is \emph{support-$d$-tilting} if
	\begin{enumerate}
	\item  $T$ is $d$-tilting as an $A/\mathrm{ann}(T)$-module.
	\item $\mathrm{ann}(T)=\langle e_T\rangle$,
	\end{enumerate}
where $e_T$ is the idempotent defined in equation \eqref{supp1}. If $T$ is properly supported, then we say that $T$ is a \emph{proper support-$d$-tilting module}. 
Dually an $A$-module $C\in \mathcal{C}$ is \emph{proper support-$d$-cotilting} if
 \begin{itemize}
 \item $C$ is $d$-cotilting as an $A/\mathrm{ann}(C)$-module.
 \item $\mathrm{ann}(C)=\langle e_C\rangle$,
 \item $C$ is properly supported.
\end{itemize} then $C$ is a \emph{proper support-$d$-cotilting module}. 
\end{defn}

\begin{lm}[Happel \cite{ha}, as found in Lemma 3.5 of \cite{io}]\label{happel}
Let $A$ be a finite-dimensional algebra satisfying  $\mathrm{gl.dim}(A)\leq d$. Let $T$ be a $d$-tilting $A$-module. Assume that $M\in \mathrm{mod}(A)$ satisfies $\mathrm{Ext}_A^i(T,M)=0$ for all $i>0$. Then there exists an exact sequence $$0\rightarrow T_d\rightarrow \cdots \rightarrow T_1\rightarrow T_0\rightarrow M\rightarrow 0$$ such that $T_j\in\mathrm{add}(T)$ for all $0\leq j\leq d$.
\end{lm}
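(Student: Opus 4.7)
The plan is to build the resolution inductively by successively taking right $\mathrm{add}(T)$-approximations, and then verify that the process terminates after at most $d$ steps. Write $T^{\perp} := \{X \in \mathrm{mod}(A) \mid \mathrm{Ext}^i_A(T,X) = 0 \text{ for all } i > 0\}$, so by assumption $M \in T^{\perp}$.

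The key claim is that every $X \in T^{\perp}$ admits a surjection from an object of $\mathrm{add}(T)$, and that, when this surjection is chosen to be a right $\mathrm{add}(T)$-approximation, its kernel stays in $T^{\perp}$. For the surjectivity I would apply $\mathrm{Hom}_A(-, X)$ to the defining $d$-tilting coresolution $0 \to A \to T^0 \to \cdots \to T^d \to 0$. Breaking this long exact sequence into short exact pieces and using $\mathrm{Ext}^i_A(T^j, X) = 0$ for all $i > 0$, a dimension-shifting chain reduces every higher Ext obstruction along the coresolution to $\mathrm{Ext}^d_A(T^d, X) = 0$, so the induced map $\mathrm{Hom}_A(T^0, X) \to \mathrm{Hom}_A(A, X) \cong X$ is surjective. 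Finitely many preimages of generators of $X$ then produce a surjection $(T^0)^k \twoheadrightarrow X$, which can be absorbed as a summand of any right $\mathrm{add}(T)$-approximation $f \colon T_0 \to X$, forcing $f$ itself to be surjective. Setting $X_1 := \ker f$ and applying $\mathrm{Hom}_A(T, -)$ to $0 \to X_1 \to T_0 \to X \to 0$, the resulting long exact sequence, combined with the surjectivity of $\mathrm{Hom}_A(T, T_0) \to \mathrm{Hom}_A(T, X)$ coming from the approximation property, yields $\mathrm{Ext}^i_A(T, X_1) = 0$ for all $i > 0$, so $X_1 \in T^{\perp}$.

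Iterating this construction starting from $M_0 := M$ produces an exact sequence $0 \to M_d \to T_{d-1} \to \cdots \to T_0 \to M \to 0$ with each $T_j \in \mathrm{add}(T)$ and $M_d \in T^{\perp}$. Because $\mathrm{gl.dim}(A) \leq d$, the module $M_d$ is a $d$-th syzygy of $M$ and is therefore projective. It then suffices to show that any projective $P \in T^{\perp}$ lies in $\mathrm{add}(T)$. As a direct summand of a free module, $P$ inherits a coresolution by $\mathrm{add}(T)$ of length $d$ from the coresolution of $A$. Applying $\mathrm{Hom}_A(-, P)$ to this coresolution and invoking $\mathrm{Ext}^{>0}_A(T, P) = 0$, the same dimension-shifting argument shows that the map $\mathrm{Hom}_A(T'^0, P) \to \mathrm{Hom}_A(P, P)$ is surjective, so $\mathrm{id}_P$ factors through $T'^0$ and $P$ is a direct summand of an object of $\mathrm{add}(T)$.

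The main obstacle I anticipate is the dimension-shifting step producing surjectivity from the $d$-tilting coresolution onto an arbitrary object of $T^{\perp}$; this is the mechanism that makes the inductive construction work, and it is reused to identify the terminal projective syzygy with an object of $\mathrm{add}(T)$. The rest of the argument is a routine induction combined with the standard behaviour of long exact sequences of $\mathrm{Ext}$.
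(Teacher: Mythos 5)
The paper does not prove this lemma; it is quoted from Happel and from Lemma~3.5 of Iyama--Oppermann, so your argument has to stand on its own. The inductive construction via right $\mathrm{add}(T)$-approximations is the right idea, and the two ingredients you establish (surjectivity of the approximation, obtained by dimension shifting along the tilting coresolution of $A$; and $\ker f \in T^{\perp}$, via the long exact sequence and the approximation property) are correct. The problem is the termination step. You write that ``the module $M_d$ is a $d$-th syzygy of $M$ and is therefore projective.'' This is false: $M_d$ is the $d$-th syzygy in an $\mathrm{add}(T)$-\emph{resolution}, not in a projective resolution, and the $T_j$ have projective dimension up to $d$, not $0$. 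There is no reason for $M_d$ to be projective; indeed, the whole point is that $M_d$ should land in $\mathrm{add}(T)$, which in general contains non-projective modules, so a proof forcing $M_d$ to be projective would be proving too much. Your final paragraph (projective $P \in T^{\perp}$ implies $P \in \mathrm{add}(T)$) is a correct argument, but it is being applied to a module that need not be projective, so the proof does not close.

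The correct way to finish is to dimension-shift along the $\mathrm{add}(T)$-resolution you have built, using that each $T_j\in\mathrm{add}(T)$ is $\mathrm{Ext}$-orthogonal to $T^{\perp}$. For any $N\in T^{\perp}$, the short exact sequences $0\to M_{j+1}\to T_j\to M_j\to 0$ give $\mathrm{Ext}^{i}_A(M_{j+1},N)\cong\mathrm{Ext}^{i+1}_A(M_j,N)$ for all $i\geq 1$, since $\mathrm{Ext}^{\geq 1}_A(T_j,N)=0$. Iterating, $\mathrm{Ext}^{1}_A(M_d,N)\cong\mathrm{Ext}^{d+1}_A(M,N)=0$ because $\mathrm{gl.dim}(A)\leq d$. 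Now take one more right $\mathrm{add}(T)$-approximation $0\to M_{d+1}\to T_d\to M_d\to 0$; since $M_{d+1}\in T^{\perp}$ and $\mathrm{Ext}^{1}_A(M_d,M_{d+1})=0$, this sequence splits, so $M_d$ is a direct summand of $T_d$ and hence $M_d\in\mathrm{add}(T)$. This repairs the gap while keeping the overall structure of your argument.
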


Consider a $d$-tilting $A$-module $T$. Since every injective module $I$ satisfies \\ $\mathrm{Ext}_A^i(T,I)=0$ for all $i>0$ we have the following corollary.

\begin{cor}
Let $(A,\mathcal{C})$ be a $d$-homological pair such that $\mathrm{gl.dim}(A)\leq d$. If $T$ is a support-$d$-tilting module, then every (proper) support-$d$-tilting module is also (proper) support-$d$-cotilting. 
\end{cor}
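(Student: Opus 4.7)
The plan is to fix an arbitrary (proper) support-$d$-tilting module $T$, pass to the quotient algebra $B := A/\mathrm{ann}(T) = A/\langle e_T\rangle$, and verify the three defining conditions of a $d$-cotilting $B$-module for $T$, invoking Happel's Lemma \ref{happel} for the key construction of the coresolution of $DB$.

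First I would record that $B$ has finite global dimension. Because $T\in\mathcal{C}$ is properly supported, the idempotent $e_T$ is properly supporting, so the hypotheses of Corollary \ref{adapt} apply and give $\mathrm{gl.dim}(B)\leq d$. In particular every $B$-module has injective dimension at most $d$, which takes care of the first $d$-cotilting condition $\mathrm{inj.dim}_B(T)\leq d$. The self-Ext vanishing $\mathrm{Ext}^i_B(T,T)=0$ for $0<i\leq d$ is already part of the assumption that $T$ is $d$-tilting over $B$, so nothing is to be done there.

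Next I would produce the coresolution of $DB$ by elements of $\mathrm{add}(T)$. The module $DB$ is an injective $B$-module, hence $\mathrm{Ext}^i_B(X,DB)=0$ for every $X\in\mathrm{mod}(B)$ and every $i>0$; in particular $\mathrm{Ext}^i_B(T,DB)=0$ for all $i>0$. Since $T$ is $d$-tilting over $B$ and $\mathrm{gl.dim}(B)\leq d$, Lemma \ref{happel} (applied to the algebra $B$, the tilting module $T$, and the module $M=DB$) provides an exact sequence
$$0\rightarrow T_d\rightarrow\cdots\rightarrow T_1\rightarrow T_0\rightarrow DB\rightarrow 0$$
with each $T_j\in\mathrm{add}(T)$. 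This is the third $d$-cotilting condition, so $T$ is $d$-cotilting as a $B$-module.

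Finally I would transfer this back to the ``proper support-$d$-cotilting'' statement. The annihilator condition $\mathrm{ann}(T)=\langle e_T\rangle$ is intrinsic to $T$ and identical in both the tilting and cotilting definitions, and ``properly supported'' is a condition on the subcategory $\mathrm{add}(T)\subseteq\mathcal{C}$, independent of whether $T$ is viewed as tilting or cotilting. Thus if $T$ is proper support-$d$-tilting, it is automatically proper support-$d$-cotilting. I do not expect any serious obstacle here: the only nontrivial input is Happel's lemma, and its hypotheses are verified essentially for free once one has noticed that $DB$ is injective and that Corollary \ref{adapt} supplies the global dimension bound on $B$.
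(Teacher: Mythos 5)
Your proof is correct and follows the same underlying idea as the paper's, but it is substantially more careful. The paper's entire justification is the one-sentence remark that precedes the corollary: ``Consider a $d$-tilting $A$-module $T$. Since every injective module $I$ satisfies $\mathrm{Ext}_A^i(T,I)=0$ for all $i>0$ we have the following corollary.'' That remark is phrased over $A$, whereas a support-$d$-tilting module is only $d$-tilting over the quotient $B = A/\langle e_T\rangle$, so one really must pass to $B$; your write-up makes this explicit. You also correctly identify that Lemma~\ref{happel} is stated under the hypothesis $\mathrm{gl.dim}\leq d$, and you supply $\mathrm{gl.dim}(B)\leq d$ by verifying that the hypotheses of Corollary~\ref{adapt} hold when $e_T$ is properly supporting (with $A/\langle e_T\rangle$ and the injective $A/\langle e_T\rangle$-modules lying in $\mathcal{C}$ by the $d$-cluster-tilting condition in Definition~\ref{psupp}). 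The paper silently elides this global-dimension check. The remaining two cotilting axioms and the persistence of the ``properly supported'' condition are handled exactly as the paper intends. One small caveat: your global-dimension step uses Corollary~\ref{adapt}, which requires the proper-support hypothesis, so the argument as written covers the parenthetical ``proper'' version; if the corollary is also meant to apply to support-$d$-tilting modules that are not proper (as the parenthetical phrasing might suggest), then $\mathrm{gl.dim}(B)\leq d$ would need a separate justification, a gap already present in the paper's own sketch rather than something you introduced.
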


It is not true in general that every pre-$d$-tilting module is a summand of a $d$-tilting module - in other words Bongartz' Lemma cannot be fully realised in this context. For a pre-$d$-tilting $A$-module $T$ such that there is no indecomposable $A$-module $M$ such that $T\oplus M$ is also pre-$d$-tilting, we say that $T$ is \emph{maximal pre-$d$-tilting}. 
The following result is useful for determining support-$d$-tilting modules. The notion of \emph{maximal support-pre-$d$-tilting} can be defined similarly.

\begin{theorem}\cite[Theorem 1.19]{miya}
For a finite-dimensional algebra $A$, if $M$ is a $d$-tilting module, then $|M|=|A|$, where $|M|$ is the number of indecomposable summands of $M$. 
\end{theorem}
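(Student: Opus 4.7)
Write $M = M_1 \oplus \cdots \oplus M_m$ with pairwise non-isomorphic indecomposable summands, and set $n = |A|$. Both numbers can be phrased as Grothendieck-group ranks: $n$ is the rank of $K_0(\mathrm{mod}\, A)$ (a free $\mathbb{Z}$-module with basis the simples), while $m$ is the rank of $K_0(\mathrm{mod}\, B)$ for $B := \mathrm{End}_A(M)^{\mathrm{op}}$, since the indecomposable projective $B$-modules $\mathrm{Hom}_A(M, M_j)$ are in bijection with the $M_j$. The plan is to show that these two ranks coincide by a Grothendieck-group calculation carried out inside $K_0(\mathrm{mod}\, A)$, working under the paper's standing hypothesis $\mathrm{gl.dim}(A) \leq d$ so that both Lemma \ref{happel} and a well-defined Euler form are available.

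For the inequality $m \geq n$: every injective $A$-module $I$ satisfies $\mathrm{Ext}^i_A(M, I) = 0$ for $i > 0$, so Lemma \ref{happel} produces a resolution $0 \to M^d \to \cdots \to M^0 \to I \to 0$ with $M^j \in \mathrm{add}(M)$. This gives $[I] = \sum_{j=0}^{d} (-1)^j [M^j] \in \mathrm{Span}_{\mathbb{Z}}\{[M_1], \ldots, [M_m]\}$ in $K_0(\mathrm{mod}\, A)$. Since every $A$-module admits a finite injective coresolution, its class is an alternating sum of injective classes and hence also lies in this span; in particular every simple does, forcing $m \geq n$.

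For the reverse inequality $m \leq n$, the plan is to prove that $[M_1], \ldots, [M_m]$ are $\mathbb{Z}$-linearly independent in $K_0(\mathrm{mod}\, A)$ using the Euler form $\langle X, Y \rangle := \sum_i (-1)^i \dim_k \mathrm{Ext}^i_A(X, Y)$, which is a well-defined bilinear pairing on $K_0(\mathrm{mod}\, A)$ because $\mathrm{gl.dim}(A) < \infty$. The vanishing $\mathrm{Ext}^i_A(M, M) = 0$ for $i > 0$ reduces $\langle M_j, M_k \rangle$ to $\dim_k \mathrm{Hom}_A(M_j, M_k)$, and the resulting matrix is precisely the Cartan matrix of $B$. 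The main obstacle is the non-singularity of this Cartan matrix: I would invoke the classical Miyashita bound $\mathrm{gl.dim}(B) \leq \mathrm{gl.dim}(A) + d < \infty$ (proved by transporting finite projective resolutions through $\mathrm{Hom}_A(M,-)$ using the tilting sequence and the $\mathrm{Ext}$-vanishing), from which the Cartan matrix of $B$ is automatically non-degenerate over $\mathbb{Q}$. A relation $\sum_j c_j [M_j] = 0$ then pairs against each $M_k$ to give $\sum_j c_j \dim_k \mathrm{Hom}_A(M_j, M_k) = 0$, whence $c_j = 0$, yielding $m \leq n$ and completing the proof.
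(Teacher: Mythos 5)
The paper itself gives no proof of this result; it is cited directly from Miyashita, so there is no internal argument to compare against. Your Grothendieck-group proof is correct, and the two halves fit together cleanly: the $\mathrm{add}(M)$-resolutions of injectives plus finite injective coresolutions show $\{[M_1],\dots,[M_m]\}$ generate $K_0(\mathrm{mod}\,A)\cong\mathbb{Z}^n$, hence $m\geq n$; and the Euler form, which collapses to the Cartan matrix of $B=\mathrm{End}_A(M)^{\mathrm{op}}$ by the $\mathrm{Ext}$-vanishing, plus invertibility of that Cartan matrix via $\mathrm{gl.dim}(B)\leq\mathrm{gl.dim}(A)+d<\infty$, shows the $[M_j]$ are $\mathbb{Z}$-independent, hence $m\leq n$. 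The sign computation $[I]=\sum_j(-1)^j[M^j]$, the identification of $\langle M_j,M_k\rangle$ with $\dim_k\mathrm{Hom}_A(M_j,M_k)$, and the use of Eilenberg's theorem that finite global dimension forces unimodular Cartan matrix are all right.

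The one thing to be aware of is a scope restriction you already flag yourself: the theorem as stated (and as Miyashita proved it) holds for an arbitrary finite-dimensional algebra, with no hypothesis on $\mathrm{gl.dim}(A)$, whereas your proof uses $\mathrm{gl.dim}(A)<\infty$ at three independent points — Lemma \ref{happel} is stated only under $\mathrm{gl.dim}(A)\leq d$, the passage from injectives to all simples needs finite injective coresolutions, and the non-degeneracy of the Cartan matrix of $B$ is deduced from $\mathrm{gl.dim}(B)<\infty$, which in turn uses $\mathrm{gl.dim}(A)<\infty$. When $\mathrm{gl.dim}(A)=\infty$ the Euler form is not defined on all of $K_0(\mathrm{mod}\,A)$ and a more delicate argument is needed (working with the partial pairing on the image of $K_0(\mathrm{proj}\,A)$, or exploiting the tilting bimodule on both sides as Miyashita does). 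Since every application in this paper is to a $d$-homological pair with $\mathrm{gl.dim}(A)\leq d$, your proof covers exactly what is needed here — but it is worth being explicit that it establishes a weaker statement than the one being cited.
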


From this result, we would hope that an alternative definition of a support-$d$-tilting $A$-module to be a module $T$ such that $T$ is a tilting $A/\langle e_T\rangle$-module and $|T|=|A/\langle e_T\rangle|$.

\section{Strong torsion classes}

\begin{defn}\label{torsion}
Let $(A,\mathcal{C})$ be a $d$-homological pair. Then a full subcategory $\mathcal{T}\subseteq \mathcal{C}$ is a \emph{$d$-strong torsion class in $\mathcal{C}$} if
\begin{enumerate}[(T1)]
\item For any $T\in\mathcal{T}$, $M\in \mathcal{C}$ and surjective morphism $T\twoheadrightarrow M$, then $M\in \mathcal{T}$.\label{torsi}
\item For each $d$-exact sequence in $\mathcal{C}$ 
$$0\rightarrow M_0\rightarrow M_1\rightarrow M_2\rightarrow \cdots \rightarrow M_d\rightarrow M_{d+1} \rightarrow 0$$ such that $M_{i-1},M_{i+1}\in\mathcal{T}$ for some $1\leq i\leq  d$, then also $M_i\in \mathcal{T}$\label{torsii}
\end{enumerate}
 Dually, a full subcategory $\mathcal{F}\subseteq \mathcal{C}$ is a \emph{$d$-strong torsion-free class in $\mathcal{C}$} if 
 
\begin{enumerate}[(C1)]
\item For any $F\in\mathcal{F}$, $M\in \mathcal{C}$ and injective morphism $M\hookrightarrow F$, then $M\in \mathcal{F}$.\label{coti}
\item For each $d$-exact sequence in $\mathcal{C}$ 
$$0\rightarrow M_0\rightarrow M_1\rightarrow M_2\rightarrow \cdots \rightarrow M_d\rightarrow M_{d+1} \rightarrow 0$$ such that $M_{i-1},M_{i+1}\in\mathcal{F}$ for some $1\leq i\leq  d$, then also $M_i\in \mathcal{F}$\label{cotii}
\end{enumerate}
\end{defn}

An alternative version of a torsion class for higher homological algebra was introduced in \cite{jorg}. It would be interesting to find out how closely the two notions are related in general. We require two preliminary results.

\begin{lm}\label{chevelle}
Let $(A,\mathcal{C})$ be an almost-directed $d$-homological pair such that \\ $\mathrm{gl.dim}(A)\leq d$. Let $T$ be a $d$-tilting $A$-module and suppose that
$$P_0\rightarrow P_1\rightarrow \cdots \rightarrow P_l$$ be an exact sequence such that $P_i$ is an indecomposable projective for each \\$0\leq i\leq l$.  If $P_0, P_l\in\mathrm{add}(T)$, then $P_i\in\mathrm{add}(T)$ for all $0\leq i\leq l$.
\end{lm}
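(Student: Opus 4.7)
I proceed by induction on $l$, with the cases $l\leq 1$ vacuous. For $l\geq 2$ the plan is to locate some intermediate index $j$ with $1\leq j\leq l-1$ and $P_j\in\mathrm{add}(T)$, split the sequence at $P_j$, and apply the induction hypothesis to the two shorter exact subsequences $P_0\to\cdots\to P_j$ and $P_j\to\cdots\to P_l$. This reduces the problem to the base case $l=2$: given $P_0\xrightarrow{f}P_1\xrightarrow{g}P_2$ exact at $P_1$ with $P_0,P_2$ indecomposable projectives in $\mathrm{add}(T)$, show $P_1\in\mathrm{add}(T)$. To propagate $\mathrm{add}(T)$-membership inward from the two endpoints through a longer sequence, I would combine the base case with the $d$-tilting resolution $0\to A\to T_0\to\cdots\to T_d\to 0$, lifting the given sequence of projectives through this resolution so that successive intermediate $P_i$ are identified with summands of some $T_k$, and then iterating the length-two case.

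The base case is the crux. Exactness at $P_1$ gives $g\circ f=0$, and the sequence decomposes into three short exact pieces
\[0\to K\to P_0\to\mathrm{im}\,f\to 0,\quad 0\to\mathrm{im}\,f\to P_1\to\mathrm{im}\,g\to 0,\quad 0\to\mathrm{im}\,g\to P_2\to C\to 0,\]
exhibiting $P_1$ as an extension of a submodule of $P_2$ by a quotient of $P_0$. Property (\ref{nak0}) forces the relation $gf=0$ to arise from length-two zero and commutativity relations, which sharply constrains $f$ and $g$; and (\ref{nak1}) makes the governing Ext-space at most one-dimensional, so the isomorphism class of $P_1$ as an extension of $\mathrm{im}\,g$ by $\mathrm{im}\,f$ is rigidly pinned down.

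To identify this rigidly-determined $P_1$ with a summand of $T$, I would invoke (\ref{nak2}) and (\ref{nak6}) to choose left and right properly-supporting idempotents $e$ with respect to which $P_1$ becomes projective or injective over $A/\langle e\rangle$ while $T$ remains tilting over that quotient. Happel's Lemma \ref{happel}, together with the cotilting dual supplied by the corollary immediately following it, then produces an $\mathrm{add}(T)$-resolution of $P_1$; the rigidity from the previous paragraph forces this resolution to collapse, exhibiting $P_1$ itself as a summand of $T$.

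The main obstacle is precisely this base case $l=2$: the inductive step is routine once it is in place. Since Bongartz's lemma is unavailable in the higher setting (as noted just above the statement of Lemma \ref{chevelle}), one cannot simply extend the pre-$d$-tilting $P_0\oplus P_2$ to a $d$-tilting module and argue by maximality. Instead $P_1$ must be identified directly as a summand of $T$, and that identification crucially couples all four almost-directedness properties with the $d$-tilting resolution of $A$.
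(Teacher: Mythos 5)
Your inductive reduction to the case $l=2$ does not go through. You propose to ``locate some intermediate index $j$ with $P_j\in\mathrm{add}(T)$, split the sequence, and apply the induction hypothesis,'' but the existence of such a $j$ is precisely what the lemma asserts and is not available at the outset. The base case $l=2$ only applies when the two \emph{adjacent} endpoints of a length-two subsequence are known to lie in $\mathrm{add}(T)$, and for $l\geq 3$ you have no information about $P_1,\dots,P_{l-1}$. Your suggestion to ``lift through the $d$-tilting resolution of $A$ so that successive $P_i$ are identified with summands of some $T_k$'' is a sketch of a fix, but as stated it does not produce the adjacent $\mathrm{add}(T)$-membership the $l=2$ case requires, so the reduction is not in fact routine. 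Notice that the paper's proof handles exactly the scenario your induction cannot reach: it begins by assuming that \emph{every} intermediate $P_i$ for $1\leq i\leq l-1$ lies outside $\mathrm{add}(T)$ and then derives a contradiction.

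The $l=2$ argument itself is also too vague to stand. You assert that condition (\ref{nak1}), via the one-dimensionality of the relevant $\mathrm{Ext}$-space, ``rigidly pins down'' $P_1$ as an extension of $\mathrm{im}\,g$ by $\mathrm{im}\,f$; but (\ref{nak1}) is a statement about indecomposable objects of $\mathcal{C}$, and the modules $\mathrm{im}\,f$, $\mathrm{im}\,g$ need not lie in $\mathcal{C}$. Even granting a one-dimensional $\mathrm{Ext}^1$-space, it contains both the zero and a nonzero class, so the extension is not uniquely determined. The final step --- producing an $\mathrm{add}(T)$-(co)resolution of $P_1$ via Lemma \ref{happel} and claiming it ``must collapse'' by rigidity --- is unjustified: no mechanism is given for why the resolution should have length zero. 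The paper's proof avoids this entirely: from $P_i\notin\mathrm{add}(T)$ it takes the $d$-exact $\mathrm{add}(T)$-coresolution $\phi_i\colon 0\to P_i\to T_{0,i}\to\cdots\to T_{d,i}\to 0$, uses $\mathrm{Ext}^d_A(T,P_l)=0$ to see that the map $P_{l-1}\to P_l$ factors through $T_{0,l-1}$ and that $\mathrm{Ext}^d_A(T_{d,l-1},P_{l-2})\neq 0$, and then packages the $\phi_i$ into a resonance diagram. Chasing the intermediate $d$-exact sequences of that diagram propagates the $\mathrm{Ext}^d$-nonvanishing step by step down to $P_0$, contradicting $P_0\in\mathrm{add}(T)$. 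This is a concrete $\mathrm{Ext}$-chase through Proposition \ref{resonance}, not an extension-rigidity argument, and it is the machinery your proposal would need to supply to close both gaps.
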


\begin{proof}
We may assume that for each $1\leq i\leq l-1$, then $P_i\notin\mathrm{add}(T)$. Then, since $T$ is $d$-tilting,  for each $1\leq i\leq l-1$ there exists a $d$-exact sequence $$\phi_i:0\rightarrow P_i\rightarrow T_{0,i}\rightarrow T_{1,i}\rightarrow \cdots \rightarrow T_{d,i}\rightarrow 0$$ such that $T_{j,i}\in\mathrm{add}(T)$ for all $0\leq j\leq d$. As $\mathrm{Ext}^d_A(T,P_l)=0$, we must have that $P_{l-1}\rightarrow P_l$ factors through $T_{0,l-1}$, as well as that $\mathrm{Ext}^d_A(T_{d,l-1},P_{l-2})\ne 0$. So we may assume that $T_{d,l-1}\cong T_{d,l-2}$ and that the $d$-exact sequences $\phi_l$ and $\phi_{l-1}$ are contained in some resonance diagram. However, by taking an intermediate $d$-exact sequence we must have that $\mathrm{Ext}^d_A(T_{d-1,l-1},P_{l-2})\ne 0$. This further means that $\mathrm{Ext}^d_A(T_{d-1,l-1},P_{l-3})\ne 0$, in other words that $T_{d,l-3}\cong T_{d-1,l-1}$. Continuing this argument further implies that $P_0\notin\mathrm{add}(T)$, a contradiction.
\end{proof}

\begin{lm}\label{skel2}
Let $(A,\mathcal{C})$ be an almost-directed $d$-homological pair such that \\$\mathrm{gl.dim}(A)\leq d$. Let $T$ be a $d$-tilting $A$-module. Then for any left properly-supporting idempotent $e$, the $A/\langle e \rangle$-module $T\otimes_A A/\langle e \rangle$ is $d$-tilting. Dually for any right properly-supporting idempotent $e$,  the $A/\langle e \rangle$-module $\mathrm{Hom}_A(A/\langle e \rangle, T)$ is $d$-tilting.
\end{lm}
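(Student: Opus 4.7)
Let $\bar A:=A/\langle e\rangle$ and $\bar T:=T\otimes_A\bar A$. The plan is to verify the three defining properties of $d$-tilting for $\bar T$ as a $\bar A$-module, and then obtain the dual statement by the symmetric argument using $\mathrm{Hom}_A(\bar A,-)$ in place of $-\otimes_A\bar A$ and right properly-supporting in place of left. By Corollary \ref{adapt}, $\mathrm{gl.dim}(\bar A)\leq d$ and $\langle e\rangle$ is $(d+1)$-idempotent, so the projective dimension bound $\mathrm{proj.dim}_{\bar A}(\bar T)\leq d$ is immediate. For the Ext vanishing $\mathrm{Ext}^i_{\bar A}(\bar T,\bar T)=0$ for $0<i\leq d$, I would combine Proposition \ref{apt1}(3) with $(d+1)$-idempotence to identify $\mathrm{Ext}^i_{\bar A}(\bar T,\bar T)\cong \mathrm{Ext}^i_A(\bar T,\bar T)$ across this range; proper support (Definition \ref{psupp}) then places $\bar T\in \mathcal{C}$, so the $d$-cluster-tilting property of $\mathcal{C}$ kills the Ext groups at once for $0<i<d$.

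For $i=d$, I plan to apply $\mathrm{Hom}_A(-,\bar T)$ to the given $d$-tilting coresolution $0\to A\to T_0\to\cdots\to T_d\to 0$ and dimension-shift. Since each $T_j\in \mathrm{add}(T)\subseteq \mathcal{C}$, lower Ext groups between $T_j$ and $\bar T$ vanish, and the top-degree Ext can be related to $\mathrm{Ext}^d_A(T,T)=0$ (from the $d$-tilting hypothesis) via the canonical surjection $T\twoheadrightarrow \bar T$ arising from $A\twoheadrightarrow \bar A$. For the tilting coresolution of $\bar A$ itself, I would apply $-\otimes_A\bar A$ to the same coresolution of $A$: the target begins with $\bar A=A\otimes_A\bar A$ and has each $T_j\otimes_A\bar A\in \mathrm{add}(\bar T)$, leaving only exactness to verify. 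This is equivalent to $\mathrm{Tor}^A_i(T_j,\bar A)=0$ for $i\geq 1$ and $0\leq j\leq d$, and here the \emph{left} properly-supporting condition --- that every injective $\bar A$-module lies in $\mathrm{add}(DA\otimes_A\bar A)$ --- becomes essential, dualising the Ext isomorphism of $(d+1)$-idempotence.

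The dual statement is obtained by the parallel argument with $\mathrm{Hom}_A(\bar A,-)$ applied to the $d$-cotilting resolution $0\to C_d\to\cdots\to C_0\to DA\to 0$ (which is available since $T$ is automatically $d$-cotilting in this setting, by the corollary to Lemma \ref{happel}), using the right properly-supporting identification $\mathrm{add}(\mathrm{Hom}_A(\bar A,A))\cong \mathrm{add}(\bar A)$ to handle the projective end of the new coresolution. The main obstacle throughout is the top degree: lower-degree Ext and Tor vanishings are controlled cleanly by the $d$-cluster-tilting property of $\mathcal{C}$ together with Proposition \ref{apt2}, but the passage to degree $d$ requires the properly-supporting hypothesis in an essential way, bridging $A$-side properties of $T$ to $\bar A$-side properties of $\bar T$.
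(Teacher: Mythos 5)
The statement is precisely the point at which the paper leans on the \emph{almost-directed} hypothesis, so any proof must account for that. Your plan and the paper's proof diverge structurally. The paper verifies the \emph{cotilting} axioms for $G(T):=T\otimes_A\bar A$ (enough, since $\mathrm{gl.dim}\le d$ gives tilting $\iff$ cotilting): using Lemma~\ref{happel} it takes an $\mathrm{add}(T)$-resolution of an injective $A$-module $I$, and then — this is the heart of the matter — uses a resonance diagram between that resolution and the projective resolutions of its terms, combined with Proposition~\ref{apt1}, to show that $G$ preserves its exactness; the left properly-supporting condition then says every injective $\bar A$-module is hit. You instead verify the \emph{tilting} axioms directly, applying $G$ to the coresolution $0\to A\to T_0\to\cdots\to T_d\to 0$ and reducing exactness of the image to a $\mathrm{Tor}$-vanishing claim. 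That is a genuinely different route, and in principle a cleaner-looking one. But there are two real gaps.

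First, the $\mathrm{Tor}$ claim is the whole problem, and you assert it rather than prove it. Via $D\,\mathrm{Tor}^A_i(\bar A,T_j)\cong\mathrm{Ext}^i_A(T_j,D\bar A)$ and $D\bar A\in\mathrm{add}(DA\otimes_A\bar A)$ one is reduced to showing $\mathrm{Ext}^i_A(T_j,\,G(I))=0$ for injective $A$-modules $I$ and $1\le i\le d$. From $0\to K\to I\to G(I)\to 0$ and injectivity of $I$ one only gets $\mathrm{Ext}^i_A(T_j,G(I))\cong\mathrm{Ext}^{i+1}_A(T_j,K)$, which vanishes trivially only for $i\ge d$; for $1\le i\le d-1$ one has to know something nontrivial about $K=\langle e\rangle I$. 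The $(d+1)$-idempotence of $\langle e\rangle$ controls $\mathrm{Ext}^i_A(\bar A,-)$ on $\bar A$-modules, but $T_j$ is not a $\bar A$-module and $K$ is not a $\bar A$-module either, so Propositions~\ref{apt1}--\ref{apt2} do not kill these groups directly. This is exactly where the paper deploys resonance diagrams, and your proposal does not supply a substitute.

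Second, your argument for $\mathrm{Ext}^d_{\bar A}(\bar T,\bar T)=0$ does not close. Applying $\mathrm{Hom}_A(-,\bar T)$ to $0\to A\to T_0\to\cdots\to T_d\to 0$ and dimension-shifting gives information about $\mathrm{Ext}^\bullet_A(L_j,\bar T)$ for the syzygies $L_j$ of that coresolution, not about $\mathrm{Ext}^\bullet_A(\bar T,\bar T)$, since $\bar T$ does not appear among the terms or syzygies. The surjection $T\twoheadrightarrow\bar T$ and $\mathrm{gl.dim}(A)\le d$ do give $\mathrm{Ext}^d_A(T_j,\bar T)=0$, but feeding that back to conclude $\mathrm{Ext}^d_A(\bar T,\bar T)=0$ requires controlling $\mathrm{Ext}^{d-1}_A(K',\bar T)$ for $K'=\ker(T\to\bar T)$, and you have not addressed that. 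So both of the steps you flag as ``essential'' and ``the main obstacle'' are precisely the ones left unargued; without supplying the resonance-diagram input (or a genuine replacement), the proof does not yet go through.
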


\begin{proof}
Suppose that $e$ is left properly-supporting. Then, by assumption, any injective $A$-module is of the form $G(I)$ for some $I\in \mathrm{add}(DA)$, where we use the notation $G:-\otimes_A A/\langle e \rangle$. For any injective $A$-module $I$ there is a $d$-exact sequence
$$\phi:0\rightarrow T_d\rightarrow \cdots \rightarrow T_0\rightarrow I\rightarrow 0$$
such that $T_i\in\mathrm{add}(T)$ for all $0\leq i\leq d$.  Suppose there exists a minimal $0\leq j\leq d$ such that $T_j$ is not projective. Else apply the functor $G$ to $\phi$ and this gives a $d$-exact sequence by Proposition \ref{apt1}. Consider the morphism $T_{j}\rightarrow T_{j-1}$. Then there is a non-zero morphism between the projective cover of $T_j$ and $T_{j-1}$.  This must factor though the projective cover of $T_{j-1}$ and form a commutative square of the form found in a resonance diagram.  So complete this square to a resonance diagram between $\phi$ and the projective resolution of $T_j$:
$$0\rightarrow P_d\rightarrow \cdots \rightarrow P_0\rightarrow T_j\rightarrow 0.$$ This resonance diagram contains the projective resolutions of all $T_i$ for $0\leq i\leq d$. The functor $G$ must be exact for any of these projective resolutions by Proposition \ref{apt1}, and thus in this case also $G(\phi)$ must be $d$-exact. Finally, since $e$ is properly supporting, we have that $\langle e \rangle$ is $(d+1)$-idempotent by Corollary \ref{adapt}, and hence $\mathrm{Ext}^d_{A/\langle e \rangle}(G(T),G(T))=0$. So $G(T)$ is a $d$-tilting $A/\langle e \rangle$-module. Dually, if $e$ is right properly supporting then $\mathrm{Hom}_A( A/\langle e \rangle,T)$ is a $d$-tilting $A/\langle e \rangle$-module.
 \end{proof}

The following theorem generalises the classical behaviour of torsion classes. Specifically, we generalise Propositions VI.1.4 and  VI.1.5 from the book \cite{ass}.

\begin{theorem}\label{elso}
Let $(A,\mathcal{C})$ be an almost-directed $d$-homological pair such that \\ $\mathrm{gl.dim}(A)\leq d$. For any proper support-$d$-tilting $A$-module $T$, let $\mathcal{T}:=\mathrm{Fac}(T)\cap \mathcal{C}$. Then the following hold:
\begin{enumerate}
\item $\mathcal{T}$ is a $d$-strong torsion class in $\mathcal{C}$.\label{t2}
\item For all $M\in \mathcal{C}$, there is a module $F_M\in\mathrm{mod}(A)$ and an exact sequence
$$0\rightarrow T_1\rightarrow T_2\rightarrow \cdots\rightarrow T_d\rightarrow M \rightarrow F_M\rightarrow 0$$
such that $T_i\in\mathcal{T}$ for all $1\leq i\leq d$ and $$\mathcal{T}=\{T\in\mathcal{C}|\mathrm{Hom}_A(T,F_M)=0\ \forall M\in\mathcal{C}\}.$$ \label{t3}
\end{enumerate}
\end{theorem}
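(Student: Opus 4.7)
The plan is to prove (2) first, and then leverage the $\mathrm{Hom}$-vanishing characterisation it provides to verify (1).

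For (2), define $F_M := M/N$ where $N := \mathrm{tr}_T(M)$ is the trace of $T$ in $M$, so $\mathrm{Hom}_A(T, F_M) = 0$ by construction and $N \in \mathrm{Fac}(T)$ as a quotient of a finite power of $T$. To obtain the left part of the sequence I would set $B := A/\langle e_T \rangle$: via the proper-support hypothesis, Lemma \ref{skel2} shows $T$ is $d$-tilting over $B$, Corollary \ref{adapt} gives $\mathrm{gl.dim}(B) \leq d$, and Definition \ref{psupp} ensures that $\mathcal{C}_B := \mathcal{C} \cap \mathrm{mod}(B)$ is $d$-cluster-tilting in $\mathrm{mod}(B)$. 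One verifies $N \in T^\perp$ in $\mathrm{mod}(B)$ using the Ext long exact sequence associated with $0 \to N \to M \to F_M \to 0$, and then applies Happel's Lemma \ref{happel} over $B$ to produce an $\mathrm{add}(T)$-resolution of $N$. The $d$-cluster-tilting structure of $\mathcal{C}_B$, together with the fact that $N$ already lies in $\mathrm{Fac}(T)$, allows this resolution to be taken of length $d-1$: $0 \to T_1 \to \cdots \to T_d \to N \to 0$ with $T_i \in \mathrm{add}(T) \subseteq \mathcal{T}$. Splicing with $0 \to N \to M \to F_M \to 0$ gives the claimed sequence. The characterisation $\mathcal{T} = \{T' \in \mathcal{C} \mid \mathrm{Hom}_A(T', F_M) = 0\ \forall M \in \mathcal{C}\}$ is then formal: for $(\subseteq)$, a surjection $T^k \twoheadrightarrow T'$ induces an injection $\mathrm{Hom}_A(T', F_M) \hookrightarrow \mathrm{Hom}_A(T, F_M)^k = 0$; for $(\supseteq)$, specialising to $M := T'$, the canonical surjection $T' \twoheadrightarrow F_{T'}$ is an element of $\mathrm{Hom}_A(T', F_{T'}) = 0$, so $F_{T'} = 0$ and hence $T' \in \mathrm{Fac}(T) \cap \mathcal{C} = \mathcal{T}$.

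For (1), axiom (T\ref{torsi}) is immediate by composition of surjections. The main work is (T\ref{torsii}). Given a $d$-exact sequence $0 \to M_0 \to \cdots \to M_{d+1} \to 0$ in $\mathcal{C}$ with $M_{i-1}, M_{i+1} \in \mathcal{T}$, I would use the characterisation from (2) and verify $\mathrm{Hom}_A(M_i, F_M) = 0$ for every $M \in \mathcal{C}$. Applying $\mathrm{Hom}_A(-, F_M)$ to the $d$-exact sequence and aligning it with the defining sequence $0 \to T_1 \to \cdots \to T_d \to M \to F_M \to 0$ from (2) via the resonance-diagram machinery of Proposition \ref{resonance} and Example \ref{aresonance}, one reduces to showing that certain composed morphisms vanish. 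The almost-directed structure — in particular the length-two zero-relation restriction of Definition \ref{almost}(\ref{nak0}) — forces these vanishings, allowing the hypothesis $\mathrm{Hom}_A(M_{i\pm 1}, F_M) = 0$ to propagate to $\mathrm{Hom}_A(M_i, F_M) = 0$. The argument is in the same spirit as the proof of Lemma \ref{chevelle}.

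The most difficult step is (T\ref{torsii}): being in $\mathrm{Fac}(T)$ is not closed under subobjects, so abelian-category formalism alone cannot give $M_i \in \mathrm{Fac}(T)$ from the ambient $d$-exact sequence. The almost-directed structure, used through the resonance machinery, is what closes the gap by forcing the required compositions and Ext-vanishings. A secondary technical point is obtaining the resolution in (2) of the correct length $d-1$ rather than the generic length $d$ produced by Happel; this also relies crucially on the combination of $N \in \mathrm{Fac}(T)$ with the $d$-cluster-tilting structure inherited by $\mathcal{C}_B$.
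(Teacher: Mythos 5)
Your overall architecture — define $F_M := M/t(M)$ via the trace, prove part (2), then read off part (1) from the $\mathrm{Hom}$-vanishing characterisation — matches the paper's. But the construction of the resolution $0 \to T_1 \to \cdots \to T_d \to M \to F_M \to 0$, which is the technical core of (2), is where your route diverges from the paper's and where it has a genuine gap.

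You work over the fixed algebra $B := A/\langle e_T \rangle$ and want to apply Happel's Lemma \ref{happel} to $N := t(M)$ over $B$. This requires verifying $\mathrm{Ext}^i_B(T,N) = 0$ for all $i > 0$, and you propose to do this via the $\mathrm{Ext}$ long exact sequence attached to $0 \to N \to M \to F_M \to 0$. But that sequence lives in $\mathrm{mod}(A)$, not $\mathrm{mod}(B)$ — the module $M$ (and hence $F_M$) need not be annihilated by $e_T$, since its support can exceed that of $T$. Even if one works with $\mathrm{Ext}_A$ and then translates via $(d+1)$-idempotency of $\langle e_T \rangle$, the long exact sequence pins $\mathrm{Ext}^{i+1}_A(T,N)$ between $\mathrm{Ext}^i_A(T,F_M)$ and $\mathrm{Ext}^{i+1}_A(T,M)$. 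The latter vanishes for $0 < i+1 < d$ from the cluster-tilting property; but the former vanishes only for $i=0$ (by construction of the trace), and you have no control over $\mathrm{Ext}^i_A(T,F_M)$ for $i \geq 1$. So the claim $N \in T^\perp_B$ is not established, and the appeal to Happel is unjustified. Relatedly, the shortening of Happel's generic resolution of length $d$ to length $d-1$ is asserted but not argued.

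The paper avoids this obstacle by not working over the fixed $e_T$ at all: it picks the trace-achieving morphism $f\colon T^\prime \to M$ with $T^\prime \in \mathrm{add}(T)$, invokes Definition~\ref{almost}(\ref{nak2}) to choose a \emph{module-dependent} left properly-supporting idempotent $e$ making $T^\prime$ \emph{projective} over $A/\langle e \rangle$, and then analyses a projective resolution over $A/\langle e \rangle$ via resonance diagrams (Proposition~\ref{resonance}, Lemma~\ref{chevelle}, Lemma~\ref{skel2}). Crucially, the modules the paper produces are $G(T_i)$ where $G = - \otimes_A A/\langle e \rangle$; these are proper factor modules of $T_i \in \mathrm{add}(T)$ in general, so they land in $\mathcal{T} = \mathrm{Fac}(T) \cap \mathcal{C}$ but \emph{not} in $\mathrm{add}(T)$. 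Your attempt to realise the resolution entirely within $\mathrm{add}(T)$ is thus aiming for a strictly stronger output than the theorem states, and the paper's own proof is evidence that this stronger output is generally unavailable. Finally, for (T\ref{torsii}) in part~(1), the gesture towards \emph{``the same spirit as Lemma~\ref{chevelle}''} would need to be turned into an actual argument: the paper's hint is to exploit the exactness of $\mathrm{Ext}^d_A(-,F_M)$ along the $d$-exact sequence, which is a concrete (if terse) strategy, whereas what you propose is not yet a proof.
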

\begin{proof}
To prove part (\ref{t3}), suppose that $T$ is a support-$d$-tilting algebra. For any indecomposable module $M\in\mathcal{C}$, let $t(M)$ be the trace of $\mathcal{T}$ in $M$; the sum of the images of all $A$-homomorphisms from modules in $\mathcal{T}$ to $M$. So take $f:T^\prime \rightarrow M$ to be the morphism from a module $T^\prime\in\mathrm{add}(T)$ to $M$ such that $\mathrm{im}(f)=t(M)$.

By Lemma \ref{happel} every injective $A$-module is in $\mathcal{T}$. So it follows by assumption (\ref{nak2}) in Definition \ref{almost} that there exists a left properly-supporting idempotent $e$ such that $T^\prime$ is projective as a $A/\langle e \rangle$-module. Let $T^\prime=:P_0$ and $G:=-\otimes_A A/\langle e\rangle$. By Lemma \ref{skel2}, $G(T)$ is a $d$-tilting $A/\langle e \rangle$-module, and we may obtain an exact sequence
$$0\rightarrow P_{n}\rightarrow P_{n-1} \rightarrow \cdots \rightarrow P_0\rightarrow M$$ such that $P_0,P_1,\ldots,P_n$ are projective as $A/\langle e \rangle$-modules. Let $M^\prime$ be the preenvelope of $\mathrm{coker}(P_1\rightarrow P_0)$ in $\mathrm{mod}(A/\langle e \rangle)\cap \mathcal{C}$, so $M^\prime$ is a submodule of $M$. 
We claim that there exists some $1\leq i\leq d$ and a $d$-exact sequence
$$0\rightarrow P_n \rightarrow \cdots\rightarrow P_0 \oplus N_{i-1} \rightarrow M^\prime\oplus N_i \rightarrow \cdots \rightarrow N_{d+1}\rightarrow 0.$$ 
This is clear if $P_{n-1}$  is indecomposable. Otherwise there must be some decomposition $P_{n-1}=Q\oplus Q^\prime$ such that  cokernels of $P_n\rightarrow Q$ and $P_n\rightarrow Q^\prime$ form a resonance diagram where there is a cokernel of $P_n\rightarrow P_{n-1}$ as an intermediate $d$-exact sequence. Therefore such a $d$-exact sequence as claimed exists.
Since $M\notin \mathcal{T}$, this means that $n<d$. 

Now assume that $P_1$ is not indecomposable, and choose a resonance diagram so that there is a $d$-exact sequence $$0\rightarrow X_0\rightarrow X_1\rightarrow\cdots \rightarrow X_{d+1}\rightarrow 0$$ such that there is some $0\leq j\leq d-1$ where $X_j$ is an indecomposable summand of $P_1$ and $X_{j+1}$ is an indecomposable summand of $G(T)$. We also have that $X_1,\ldots, X_{j+1}$ are indecomposable projective $A/\langle e \rangle$-modules. It follows that the morphism $X_{j+1}\rightarrow M$ factors through $X_{j+2}\oplus Y$ for some $Y\in\mathcal{C}$. Now assume that there is some $0\leq k\leq j$ such that $X_k\notin\mathrm{add}(G(T))$ and $X_{k+1}\in\mathrm{add}(G(T))$. By Lemma \ref{chevelle}, this means that $X_{k+1}, X_{k+2},\ldots, X_{j+1}\in\mathrm{add}(G(T))$. Moreover, in the $d$-exact sequence $$\phi:0\rightarrow X_k\rightarrow T_0\rightarrow\cdots\rightarrow T_d\rightarrow 0,$$ at least one of $Y\in\mathrm{add}(G(T))$ or $X_{j+2}\in\mathrm{add}(G(T))$. Since the functor $G$ is exact on $\phi$ (as in the proof of Lemma \ref{skel2}), this implies that $\mathrm{Im}(P_0\rightarrow M)\ne t(M)$, a contradiction. 

So either $P_0, P_1,\ldots, P_n\in\mathrm{add}(G(T))$ or they are all indecomposable projective $A/\langle e \rangle$-modules. As in the proof of Lemma \ref{chevelle}, there is a resonance diagram containing the $\mathrm{add}(G(T))$-coresolutions of each of these indecomposable projective modules. From this diagram, we may obtain a sequence $$0\rightarrow G(T_1)\rightarrow G(T_2)\rightarrow \cdots\rightarrow G(T_d)\rightarrow M \rightarrow M/t(M)\rightarrow 0.$$ Since $G(T_d)\cong T_d$,  and $G(T_i)$ is a factor module of $T_i$ for all $1\leq i\leq d$, this completes the proof. 

Note that $\mathrm{Hom}_A(T,F_M)=0$ for any $T\in\mathcal{T}$ and $M\in\mathcal{C}$ by definition, and $\mathrm{Hom}_A(M,F_M)= 0$ if any only if $F_M=0$, which means $M\in\mathcal{T}$. Part (\ref{t2}) follows from the exactness properties of $\mathrm{Ext}^d_A(-,F_M)$ for any $M\in\mathcal{C}$.
\end{proof}

For completeness' sake, we state the dual result.
\begin{theorem}
Let $(A,\mathcal{C})$ be an almost-directed $d$-homological pair such that\\ $\mathrm{gl.dim}(A)\leq d$. For any proper support-$d$-cotilting $A$-module $F$, let \\$\mathcal{F}:=\mathrm{Sub}(F)\cap \mathcal{C}$. Then the following hold:
\begin{enumerate}
\item $\mathcal{F}$ is a $d$-strong torsion-free class.
\item For all $M\in \mathcal{C}$, there is a module $T_M\in\mathrm{mod}(A)$ and an exact sequence
$$0\rightarrow T_M\rightarrow M\rightarrow F_0 \rightarrow F_1\rightarrow \cdots\rightarrow F_d\rightarrow 0$$
such that $F_i\in\mathcal{F}$ for all $1\leq i\leq d$ and $$\mathcal{F}=\{F\in\mathcal{C}|\mathrm{Hom}_A(T_M,F)=0\ \forall M\in\mathcal{C}\}.$$ 
\end{enumerate}
\end{theorem}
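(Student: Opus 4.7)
The plan is to prove this by a direct dualization of the proof of Theorem \ref{elso}, since the statement is its formal mirror. Every tool used there has a dual available in the paper: property (\ref{nak2}) is replaced by (\ref{nak6}), left properly-supporting idempotents by right properly-supporting ones, the functor $G=-\otimes_A A/\langle e\rangle$ by $H=\mathrm{Hom}_A(A/\langle e\rangle,-)$, and the second assertion of Lemma \ref{skel2} takes over from the first. I will also use the dual of Happel's Lemma \ref{happel}: for a $d$-cotilting $A$-module $F$ with $\mathrm{gl.dim}(A)\le d$, any $M$ satisfying $\mathrm{Ext}^i_A(M,F)=0$ for all $i>0$ admits an exact coresolution $0\to M\to F^0\to F^1\to\cdots\to F^d\to 0$ with $F^j\in\mathrm{add}(F)$. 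In particular every projective $A$-module lies in $\mathcal{F}=\mathrm{Sub}(F)\cap\mathcal{C}$, which is the dual counterpart to the fact exploited in Theorem \ref{elso} that every injective lies in $\mathcal{T}$.

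For part (2), I fix an indecomposable $M\in\mathcal{C}$ and work with the \emph{reject} $r(M)$ of $\mathcal{F}$ in $M$, namely the intersection of the kernels of all morphisms from $M$ into objects of $\mathrm{add}(F)$. Dually to the trace construction, one picks $f\colon M\rightarrow F'$ with $F'\in\mathrm{add}(F)$ realising $\ker(f)=r(M)$. Then apply property (\ref{nak6}) to $F'$ to obtain a right properly-supporting idempotent $e$ for which $F'$ is injective as an $A/\langle e\rangle$-module, and apply $H=\mathrm{Hom}_A(A/\langle e\rangle,-)$, observing by the dual part of Lemma \ref{skel2} that $H(F)$ is a $d$-cotilting $A/\langle e\rangle$-module. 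One then mirrors the argument of Theorem \ref{elso}: build an injective (over $A/\langle e\rangle$) coresolution
$$M/r(M)\rightarrow I^0\rightarrow I^1\rightarrow\cdots\rightarrow I^n,$$
analyze the first map using a resonance diagram exactly as in the original proof (with injective envelopes replacing projective covers, and intermediate $d$-exact sequences obtained by $d$-pullbacks rather than $d$-pushouts), and conclude first that $n\le d$ and then that each $I^j$ lies in $\mathrm{add}(F)\subseteq\mathcal{F}$. Setting $T_M:=r(M)$ and padding with zeros if necessary yields the required exact sequence
$$0\rightarrow T_M\rightarrow M\rightarrow F_0\rightarrow F_1\rightarrow\cdots\rightarrow F_d\rightarrow 0.$$
The characterisation $\mathcal{F}=\{F\in\mathcal{C}\mid\mathrm{Hom}_A(T_M,F)=0\ \forall M\in\mathcal{C}\}$ is then immediate: any morphism $T_M\rightarrow F''$ with $F''\in\mathcal{F}$ would extend along the inclusion $T_M\hookrightarrow M$ (using that $F''\in\mathrm{Sub}(F)$ and the exact sequence above) to contradict the definition of the reject, while conversely $F\in\mathcal{F}$ satisfies $\mathrm{Hom}_A(T_F,F)=0$ by construction.

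For part (1), condition (C\ref{coti}) is automatic since $\mathrm{Sub}(F)$ is closed under submodules and $\mathcal{C}\cap\mathrm{mod}(A/\mathrm{ann}(F))$ is closed inside $\mathcal{C}$. For (C\ref{cotii}), given a $d$-exact sequence in $\mathcal{C}$ with $M_{i-1},M_{i+1}\in\mathcal{F}$, I apply the functor $\mathrm{Hom}_A(T_{M_i},-)$ and use the vanishing $\mathrm{Ext}^j_A(T_{M_i},\mathcal{F})=0$ for $1\le j\le d$ (which holds because $T_{M_i}\in\mathrm{Sub}(F)^\perp$ on the $\mathrm{Ext}$-level, dualizing the key $\mathrm{Ext}^d_A(-,F_M)$ computation at the end of Theorem \ref{elso}) together with the characterisation just established to conclude $M_i\in\mathcal{F}$.

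The main obstacle is confirming that the resonance-diagram analysis really does dualize cleanly: one must check that the intermediate $d$-exact sequences produced by $d$-pullbacks (in the sense of Remark \ref{resin}) have their terms in $\mathrm{add}(H(F))$ whenever the outer terms do, paralleling the role $d$-pushouts played in the tilting argument. This follows from the symmetry of Proposition \ref{resonance} and Example \ref{aresonance}, together with Lemma \ref{chevelle} applied to the dual situation where one tracks injective envelopes of the $H(F)$-resolutions rather than projective covers. With this dualization of the resonance step in hand, all remaining steps are mechanical.
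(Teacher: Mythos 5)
Your proposal is correct and takes exactly the approach the paper intends: the paper states this theorem ``for completeness' sake'' without proof, and the intended proof is precisely the dualization of Theorem \ref{elso} that you carry out. You have correctly identified all the dual ingredients already available in the paper --- property (\ref{nak6}) in place of (\ref{nak2}), right properly-supporting idempotents, the second half of Lemma \ref{skel2}, the reject in place of the trace, $d$-pullbacks in place of $d$-pushouts in the resonance diagrams, and the dual of Happel's lemma --- and the argument goes through mutatis mutandis.
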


Theorem \ref{elso} does not provide a classification of strong $d$-torsion classes. However we conjecture that the following is true:

\begin{conj}
Let $(A,\mathcal{C})$ be an almost-directed $d$-homological pair such that $\mathrm{gl.dim}(A)\leq d$. Then there is a bijection between $d$-strong torsion classes in $\mathcal{C}$ and maximal support pre-$d$-tilting $A$-modules.
\end{conj}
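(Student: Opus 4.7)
The plan is to construct mutually inverse maps and verify them using the resonance-diagram machinery already developed. Given a maximal support pre-$d$-tilting module $T$, set $\Phi(T):=\mathrm{Fac}(T)\cap\mathcal{C}$. Given a $d$-strong torsion class $\mathcal{T}$, set $\Psi(\mathcal{T})$ to be the direct sum over isomorphism class representatives of the indecomposable \emph{Ext-projective} objects of $\mathcal{T}$, i.e., modules $E\in\mathcal{T}$ with $\mathrm{Ext}^i_A(E,X)=0$ for all $X\in\mathcal{T}$ and $1\leq i\leq d$. The arguments of the last paragraph of the proof of Theorem \ref{elso} use the pre-$d$-tilting hypothesis on $T$ only through the exactness properties of $\mathrm{Ext}^d_A(-,F_M)$, so they extend to show that $\Phi(T)$ is already a $d$-strong torsion class whenever $T$ is merely maximal pre-$d$-tilting.

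Next I would check that $\Psi(\mathcal{T})$ is pre-$d$-tilting: projective dimension at most $d$ is immediate from $\mathrm{gl.dim}(A)\leq d$, Ext-vanishing among summands is built into the definition of Ext-projective, and the support condition is extracted from Definition \ref{almost}(\ref{nak2}) and (\ref{nak6}) by choosing left- and right- properly-supporting idempotents annihilating $\mathcal{T}$ and applying Lemma \ref{skel2} to pass to $\mathrm{mod}(A/\langle e_\mathcal{T}\rangle)$, where the remaining statements become those for the $d$-tilting module $\Psi(\mathcal{T})\otimes_A A/\langle e_\mathcal{T}\rangle$ in a strictly smaller $d$-representation-finite pair.

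The key verification is $\Phi(\Psi(\mathcal{T}))=\mathcal{T}$. The inclusion $\mathrm{Fac}(\Psi(\mathcal{T}))\cap\mathcal{C}\subseteq\mathcal{T}$ is just axiom (T\ref{torsi}). For the reverse inclusion I would adapt the proof of Theorem \ref{elso}(\ref{t3}): given $M\in\mathcal{T}$, take the trace $t(M)$ of $\mathcal{T}$ in $M$, fix a projective cover in $A/\langle e_\mathcal{T}\rangle$, and build a resonance diagram in the style of Example \ref{aresonance} around its $\mathrm{add}(\Psi(\mathcal{T}))$-coresolution. Running axiom (T\ref{torsii}) on the intermediate $d$-exact sequences of the resulting diagram forces every entry to lie in $\mathcal{T}$ and, by Ext-vanishing propagated along the diagram, ultimately in $\mathrm{add}(\Psi(\mathcal{T}))$, yielding a surjection from $\Psi(\mathcal{T})$ onto $M$.

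For the reverse composition $\Psi(\Phi(T))=T$, each indecomposable summand of $T$ is Ext-projective in $\Phi(T)$ by the pre-$d$-tilting condition together with the factor-closure of $\mathrm{Fac}(T)\cap\mathcal{C}$, and any hypothetical indecomposable Ext-projective $E\in\Phi(T)\setminus\mathrm{add}(T)$ would allow one to verify $\mathrm{Ext}^i_A(T,E)=0=\mathrm{Ext}^i_A(E,T)$ via a resonance diagram connecting the two, making $T\oplus E$ pre-$d$-tilting and contradicting maximality. The main obstacle will be the $\supseteq$ half of $\Phi(\Psi(\mathcal{T}))=\mathcal{T}$: unlike the classical ($d=1$) case, one cannot invoke Bongartz completion, so the existence of sufficiently many Ext-projectives in $\mathcal{T}$ to drive the resonance-diagram construction must be established by a direct combinatorial argument using almost-directedness. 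A secondary but persistent difficulty is matching the idempotent naturally supporting $\Psi(\mathcal{T})$ with $e_\mathcal{T}$, which seems to require a careful simultaneous use of clauses (\ref{nak2}) and (\ref{nak6}) of Definition \ref{almost} together with Corollary \ref{adapt}.
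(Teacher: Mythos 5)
This statement is a \emph{conjecture} in the paper, not a proven result: the paper explicitly writes, immediately before it, ``Theorem \ref{elso} does not provide a classification of strong $d$-torsion classes. However we conjecture that the following is true.'' So there is no ``paper's own proof'' to compare against, and your writeup should be judged on whether it actually closes the gap the paper deliberately leaves open. It does not.

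Your first claimed step --- that the arguments establishing Theorem \ref{elso} ``use the pre-$d$-tilting hypothesis on $T$ only through the exactness properties of $\mathrm{Ext}^d_A(-,F_M)$'' and so carry over to maximal pre-$d$-tilting modules --- misreads the structure of that proof. The final paragraph of the proof of Theorem \ref{elso} indeed only uses such exactness, but it deduces part (\ref{t2}) from part (\ref{t3}), and part (\ref{t3}) is the hard part. Its proof invokes Lemma \ref{happel} (to show injectives lie in $\mathcal{T}$), Lemma \ref{skel2} (to pass to $G(T)$ as a $d$-tilting module over $A/\langle e\rangle$), and the existence of $d$-exact $\mathrm{add}(T)$-coresolutions of indecomposable projectives; each of these uses the full $d$-tilting hypothesis, not merely pre-$d$-tilting with maximality. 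The paper even stresses that Bongartz completion fails in this setting, so there is no cheap route from pre-$d$-tilting to $d$-tilting that you could quietly slip in here.

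Your own last paragraph candidly identifies the two missing ingredients: showing that a general $d$-strong torsion class $\mathcal{T}$ has enough Ext-projectives to run the resonance-diagram reconstruction, and showing that the idempotent supporting $\Psi(\mathcal{T})$ coincides with $e_\mathcal{T}$. These are precisely the obstructions that leave the statement a conjecture. Absent a Bongartz-type argument (or a replacement adapted to the almost-directed setting), the inclusion $\mathcal{T}\subseteq\mathrm{Fac}(\Psi(\mathcal{T}))\cap\mathcal{C}$ is unestablished, and without matching idempotents $\Psi(\mathcal{T})$ need not be \emph{support} pre-$d$-tilting in the sense of Definition \ref{suppt} at all. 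What you have is a reasonable outline of what a proof might look like, not a proof.
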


If $T$ is a proper support-$d$-tilting module, then we call the $d$-strong torsion class $\mathcal{T}=\mathrm{Fac}(T)\cap \mathcal{C}$ a \emph{standard $d$-strong torsion class}. Likewise, we call the $d$-strong torsion-free class $\mathcal{F}=\mathrm{Sub}(T)\cap\mathcal{C}$ a \emph{standard $d$-strong torsion-free class}.

\section{Wide subcategories} 

Let $(A,\mathcal{C})$ be a $d$-homological pair. As introduced in \cite[Definition 2.11]{hjv}, an additive subcategory $\mathcal{W}\subseteq \mathcal{C}$ is a \emph{wide subcategory} if it satisfies the following conditions:

\begin{enumerate}[(W1)]
\item For each $W_1, W_2\in \mathcal{W}$ and morphism $W_1\rightarrow W_2$ there exist $d$-exact sequences
 $$0\rightarrow M_1\rightarrow M_2\rightarrow \cdots \rightarrow M_d\rightarrow W_1\rightarrow W_2$$ 
 $$W_1\rightarrow W_2\rightarrow N_1\rightarrow \cdots \rightarrow N_{d-1}\rightarrow N_d\rightarrow 0$$ 
such that $M_i,N_i\in\mathcal{W}$ for all $1\leq i \leq d$.\label{w1}
\item For any $W, W^\prime \in \mathcal{W}$, then any $d$-exact sequence
$$0\rightarrow W\rightarrow U_1\rightarrow U_2 \rightarrow \cdots \rightarrow U_d\rightarrow W^\prime \rightarrow 0$$ is Yoneda equivalent to a $d$-exact sequence 
$$0\rightarrow W\rightarrow W_1\rightarrow W_2 \rightarrow \cdots \rightarrow W_d\rightarrow W^\prime \rightarrow 0$$
such that $W_i\in \mathcal{W}$ for all $1\leq i\leq d$.\label{w2}
\end{enumerate}

We need to supplement this with the following definition.

\begin{defn}\label{resonant}
Let $(A,\mathcal{C})$ be a $d$-homological pair. For two wide subcategories $\mathcal{W}_1\subseteq \mathcal{C}$ and $\mathcal{W}_2\subseteq \mathcal{C}$ such that $\mathcal{W}_1\ne \mathcal{W}_2$, we say $\mathcal{W}_1\prec \mathcal{W}_2$ if for all $M\in\mathcal{W}_1$ and $N\in\mathcal{W}_2$ such that $M\not\cong N$, then  $\mathrm{Hom}_A(N,M)=0$. 

A collection $(\mathcal{W}_1,\mathcal{W}_2,\ldots ,\mathcal{W}_n)$ of wide subcategories of $\mathcal{C}$ is a \emph{directed collection of wide subcategories in $\mathcal{C}$} if $\mathcal{W}_i\prec \mathcal{W}_{j}$ for all $1\leq i<j\leq n$.

A directed collection of wide subcategories $(\mathcal{W}_1,\mathcal{W}_2,\ldots ,\mathcal{W}_n)$ is \emph{resonant} if the union $\mathcal{W}_1 \cup \mathcal{W}_2\cup \cdots\cup \mathcal{W}_n$ has an additive generator $M$ such that $\mathrm{Fac}(M)\cap\mathcal{C}$ is a standard $d$-strong torsion class.

Dually, a directed collection of wide subcategories $(\mathcal{W}_1,\mathcal{W}_2,\ldots ,\mathcal{W}_n)$ is \emph{coresonant} if the union $\mathcal{W}_1 \cup \mathcal{W}_2\cup \cdots\cup \mathcal{W}_n$ has an additive generator $M$ such that $\mathrm{Sub}(M)\cap \mathcal{C}$ is a standard $d$-strong torsion-free class.
\end{defn}

Now fix a positive integer $n$, and let $(A,\mathcal{C})$ be a $d$-homological pair such that $A$ is a $d$-Auslander algebra of linearly oriented type $A_n$. Observe that for any $d$-exact sequence in $\mathcal{C}$, $$0\rightarrow M_0 \rightarrow M_1\rightarrow \cdots \rightarrow M_{d+1}\rightarrow 0$$ and module $N\in\mathcal{C}$, there is a $d$-exact sequence for every $0\leq i\leq d$
$$0\rightarrow M_0 \rightarrow M_1\rightarrow \cdots \rightarrow M_i\oplus N\rightarrow M_{i+1}\oplus N\rightarrow \cdots \rightarrow M_{d+1}\rightarrow 0.$$ Unless otherwise stated, we will consider every $d$-exact sequence to have no such summand $N$.
The following result is implicit in the proof of \cite[Theorem 2.18]{jk} using iterated application of Lemma 1.20 \cite{jk}.

\begin{lm}\label{indec}
Let $A$ be a $d$-Auslander algebra of linearly oriented type $A_n$ with unique $d$-cluster-tilting subcategory $\mathcal{C}$. Then for any $d$-exact sequence in $\mathcal{C}$ consisting of only indecomposable modules:
$$0\rightarrow X_0\rightarrow X_1\rightarrow \cdots\rightarrow X_{d+1}\rightarrow 0$$ there exists a properly-supporting idempotent $e$, such that for all $0\leq i\leq d$ each $X_i$ is injective 
as an $A/\langle e \rangle$-module, and for all $1\leq i\leq d+1$ each $X_i$ is projective 
as an $A/\langle e \rangle$-module.
\end{lm}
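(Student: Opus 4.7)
The plan is to argue via the explicit combinatorial model of $\mathcal{C}$ for $d$-Auslander algebras of linearly oriented type $A_n$ developed in \cite{jk}. In that model the indecomposable objects of $\mathcal{C}$ are parameterised by admissible weakly-increasing $(d+1)$-tuples, the indecomposable projectives and injectives correspond to tuples containing certain ``boundary'' indices, and the $d$-exact sequences consisting of indecomposables correspond to a standard ``shift'' transformation on these tuples. From this combinatorial description one reads off, for the given sequence $0\rightarrow X_0\rightarrow X_1\rightarrow\cdots\rightarrow X_{d+1}\rightarrow 0$, a contiguous window $W\subseteq Q_0$ of vertices of the quiver containing the union of the supports of all the $X_i$.

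The first step is to choose $e=\sum_{j\notin W'}e_j$, where $W'$ is the smallest enlargement of $W$ for which $A/\langle e\rangle$ is again a $d$-Auslander algebra of linearly oriented type $A_m$ (with $m<n$). To justify that such a choice exists and that $e$ is properly supporting, I would remove vertices from $Q_0\setminus W'$ one at a time and at each stage invoke \cite[Lemma 1.20]{jk}, which guarantees that the deletion produces a strictly smaller algebra of the same ``higher Nakayama'' type and that $\mathcal{C}$ restricts compatibly along the quotient. Combined with Corollary \ref{adapt}, this iterated reduction furnishes the required $(d+1)$-idempotency of $\langle e\rangle$, the equality $\mathcal{C}\cap\mathrm{mod}(A/\langle e\rangle)\cong\{M\otimes_A A/\langle e\rangle\mid M\in\mathcal{C}\}$ of Definition \ref{psupp}(\ref{suppg}), and therefore that $e$ is properly supporting.

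The second step is to verify the projectivity/injectivity assertion inside the smaller $d$-Auslander algebra $A/\langle e\rangle$. Under the combinatorial labelling the indecomposable injective $A/\langle e\rangle$-modules are exactly those whose tuple contains the maximal index of $W'$, while the indecomposable projectives are those whose tuple contains the minimal index; by the minimality of $W'$ the tuples labelling $X_0$ and $X_{d+1}$ realise, respectively, the injective and projective ``boundaries'' of the shift. Reading off the intermediate terms one sees that for each $1\leq i\leq d$ the tuple of $X_i$ contains both boundary indices, so that $X_i$ is projective-injective as an $A/\langle e\rangle$-module; this matches the fact that $A/\langle e\rangle$ has dominant dimension at least $d+1$, so that the given $d$-exact sequence is precisely a slice of the projective-injective core of $\mathrm{mod}(A/\langle e\rangle)$.

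The main obstacle is the precise choice of $W'$: it must be large enough that the combinatorial ``shift'' producing $X_0,\ldots,X_{d+1}$ is fully realised inside the smaller algebra (so that the intermediate terms actually land among the projective-injective modules and are not merely in $\mathcal{C}\cap\mathrm{mod}(A/\langle e\rangle)$), yet minimal enough that $X_0$ acquires the status of an injective and $X_{d+1}$ that of a projective. Once the window is pinned down from the tuples of $X_0$ and $X_{d+1}$, the remainder of the argument is essentially bookkeeping within the Jasso--K\"ulshammer formalism together with the already-established almost-directedness of $(A,\mathcal{C})$.
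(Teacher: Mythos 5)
The paper gives no proof of Lemma~\ref{indec}: the preceding sentence simply says the result ``is implicit in the proof of \cite[Theorem 2.18]{jk} using iterated application of Lemma 1.20 \cite{jk}.'' Your proposal fills in precisely what such a reduction would look like: parameterise $\mathcal{C}$ by the combinatorial ($(d+1)$-tuple) model of \cite{jk}, read off a window $W'$ of vertices from the sequence, pass to $A/\langle e\rangle$ by peeling off the complementary vertices one at a time via \cite[Lemma 1.20]{jk}, and invoke Corollary~\ref{adapt} to see that $e$ is properly supporting. This is exactly the strategy the paper is pointing at, so your route is the same route, only spelled out.

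One caution worth flagging, which you inherited from the statement itself. As written, the lemma asserts that $X_0$ is injective over $A/\langle e\rangle$ and $X_{d+1}$ is projective, and your proof echoes this (``$X_0$ realises the injective boundary, $X_{d+1}$ the projective boundary''). If you test this against the Auslander algebra of linearly oriented $A_3$ with $d=2$ and the $2$-exact sequence
$$0\rightarrow \begin{smallmatrix}4\end{smallmatrix}\rightarrow \begin{smallmatrix}2\\4\end{smallmatrix}\rightarrow \begin{smallmatrix}1\\2\end{smallmatrix}\rightarrow \begin{smallmatrix}1\end{smallmatrix}\rightarrow 0,$$
the only idempotent keeping all four terms alive cuts down to vertices $\{1,2,4\}$, giving the Auslander algebra of $A_2$; there $X_0=\begin{smallmatrix}4\end{smallmatrix}$ is projective but not injective, $X_1$ and $X_2$ are projective--injective, and $X_3=\begin{smallmatrix}1\end{smallmatrix}$ is injective but not projective. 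This is also the version the paper actually uses in the proof of Lemma~\ref{indec2}, where it immediately passes to ``$M_i$ is projective as an $A$-module for $0\leq i\leq d$.'' So the projective and injective roles in the statement are swapped; your tuple-boundary bookkeeping would need to be flipped accordingly, but the mechanism of your argument is unaffected. The remaining places where you should be a little more careful are (i) your assertion that $A/\langle e\rangle$ is again a $d$-Auslander algebra of linearly oriented type $A_m$ rather than merely a higher Nakayama algebra in the sense of \cite{jk} --- this does hold in the small example above, but it deserves a line of justification if you rely on it; and (ii) the identification of arbitrary $d$-exact sequences with all terms indecomposable with the ``shift'' moves in the tuple model, which is true but should be cited or argued rather than taken as read.
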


\begin{defn}\label{betadef}
Let $A$ be a $d$-Auslander algebra of linearly oriented type $A_n$ with unique $d$-cluster-tilting subcategory $\mathcal{C}$, and let $\mathcal{T}\subseteq \mathcal{C}$ be $d$-strong torsion class. Then define $M\in \alpha(\mathcal{T})$ if for any submodule $M^\prime$ of $M$ such that $M^\prime \in \mathcal{C}$ and indecomposable $K_0\in \mathcal {T}$ with surjective morphism $f:K_0\rightarrow M^\prime$, then there exists a kernel of $f$ in $\mathcal{C}$: $$0\rightarrow K_d\rightarrow\cdots \rightarrow K_2\rightarrow K_1\rightarrow K_0\rightarrow M^\prime\rightarrow 0$$ such that $K_i\in \mathcal{T}$ for all $0\leq i\leq d$.
\end{defn}

\begin{lm}\label{indec2}
Let $A$ be a $d$-Auslander algebra of linearly oriented type $A_n$ with unique $d$-cluster-tilting subcategory $\mathcal{C}$. Let $T$ be a $d$-tilting $A$-module and $\mathcal{T}$ the associated standard $d$-strong torsion class. Suppose that
$$0\rightarrow M_0\rightarrow M_1\rightarrow \cdots \rightarrow M_{d+1}\rightarrow 0$$ is an exact sequence such that $M_i\in\mathcal{C}$ is an indecomposable module for each $0\leq i\leq d+1$. If $M_0, M_{d+1}\in\alpha(\mathcal{T})$, then $M_i\in\alpha(\mathcal{T})$ for all $0\leq i\leq d+1$.
\end{lm}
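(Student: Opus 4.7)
The plan is to fix an index $1 \le i \le d$, a submodule $M' \subseteq M_i$ with $M' \in \mathcal{C}$, and an indecomposable $K_0 \in \mathcal{T}$ with surjection $f\colon K_0 \twoheadrightarrow M'$, and then construct the required $d$-exact sequence
$$0 \to K_d \to \cdots \to K_1 \to K_0 \to M' \to 0$$
with each $K_j \in \mathcal{T}$. I would first apply Lemma \ref{indec} to the $d$-exact sequence of indecomposables to obtain a properly-supporting idempotent $e$ for which every $M_j$ is projective or injective as an $A/\langle e \rangle$-module. By Lemma \ref{skel2}, $G(T) := T \otimes_A A/\langle e \rangle$ is then a $d$-tilting $A/\langle e \rangle$-module, and the associated standard $d$-strong torsion class $\mathcal{T}_G$ in $\mathcal{C} \cap \mathrm{mod}(A/\langle e \rangle)$ is exactly the image of $\mathcal{T}$ under $G$. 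This reduces the problem to $\mathrm{mod}(A/\langle e \rangle)$, where the combinatorics of a $d$-Auslander algebra of linearly oriented $A_n$-type are preserved.

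Next, I would employ the resonance diagram machinery of Example \ref{aresonance}, combined with the $d$-pushouts/pullbacks of Proposition \ref{jasso}, to build a commutative diagram simultaneously containing the original $d$-exact sequence $0 \to M_0 \to \cdots \to M_{d+1} \to 0$ and a $d$-exact sequence obtained by $d$-pushing out along $f$. The intermediate $d$-exact sequences supplied by Remark \ref{resin} yield auxiliary sequences that can be forced to pass through both $M_0$ and $M_{d+1}$; the hypotheses $M_0, M_{d+1} \in \alpha(\mathcal{T})$ then supply $\mathcal{T}$-members at the ends of those auxiliary sequences. Axiom (T\ref{torsii}) of Definition \ref{torsion} allows the internal positions to be inductively forced into $\mathcal{T}$ as well.

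Finally, I would glue these partial kernel sequences using the Yoneda-equivalence property stated after Proposition \ref{jasso} (namely, that any $d$-exact sequence $0 \to M \to E_1 \to \cdots \to E_d \to N \to 0$ in $\mathcal{C}$ is Yoneda equivalent to one with all middle terms in $\mathcal{C}$), extracting from the assembled diagram a single $d$-exact sequence $0 \to K_d \to \cdots \to K_1 \to K_0 \to M' \to 0$ with $K_j \in \mathcal{T}$.

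The main obstacle I anticipate is controlling the relationship between the submodule $M' \subseteq M_i$ and the neighboring modules $M_{i-1}, M_{i+1}$: a priori, $M'$ need not embed compatibly into $M_{i+1}$ nor be a quotient of a nice summand of $M_{i-1}$, so the $d$-pushout and $d$-pullback constructions must be set up so that the surjection $f\colon K_0 \twoheadrightarrow M'$ extends consistently into a resonance diagram reaching both endpoints $M_0$ and $M_{d+1}$. The almost-directedness hypothesis — especially property (\ref{nak1}) of Definition \ref{almost} bounding $\mathrm{Ext}$-dimensions by one, together with the rigid linearly oriented $A_n$ shape forced by Example \ref{aresonance} — will be essential for establishing that consistency and for ruling out indecomposable summands outside $\mathcal{T}$ at each step.
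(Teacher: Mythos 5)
Your proposal does not resolve the central difficulty, and the mechanism you rely on will not fire. You plan to establish $\mathcal{T}$-membership of the interior modules $M_1,\ldots,M_d$ by inductively applying axiom (T\ref{torsii}). But (T\ref{torsii}) requires $M_{i-1}$ and $M_{i+1}$ to both already lie in $\mathcal{T}$ before you can conclude $M_i\in\mathcal{T}$, and at the start you only have $M_0$ and $M_{d+1}$. For $d\geq 2$ there is no pair of adjacent indices where the hypothesis is satisfied, so there is no base case to kick off the induction; the resonance diagrams of Example~\ref{aresonance} and the intermediate sequences of Remark~\ref{resin} do not by themselves repair this, because they produce more modules whose $\mathcal{T}$-membership is also unknown. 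The paper circumvents this with an entirely different first step: after reducing (via Lemma~\ref{indec}) to a situation where $M_0,\ldots,M_d$ are projective, it exploits $\mathrm{Ext}^d_A(M_{d+1},M_0)\neq 0$ together with the one-dimensionality of $\mathrm{Ext}$ groups (almost-directedness) to force the existence of a surjection $X\twoheadrightarrow M_d$ with $X\in\mathrm{add}(T)$ — ruling out the opposite surjection $M_d\twoheadrightarrow X$ by a resonance-diagram contradiction — and then invokes Lemma~\ref{chevelle}, the ``stretched sandwich'' lemma for exact strings of projectives in $\mathrm{add}(T)$, to push membership through the whole interior. Your sketch never invokes Lemma~\ref{chevelle} at all, and has no substitute for this bridge.

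A second gap concerns the passage from $\mathcal{T}$ to $\alpha(\mathcal{T})$. Your plan jumps straight to constructing the kernel resolutions demanded by Definition~\ref{betadef}, but you flag rather than resolve the obstacle of a submodule $M'\subseteq M_i$ that need not interact well with the ambient $d$-exact sequence. In the paper this is handled separately, after $\mathcal{T}$-membership is secured: one takes an arbitrary surjection $T_i'\twoheadrightarrow M_i$ with $T_i'\in\mathcal{T}$, extends it to a resonance diagram in which there are surjections $T_j'\twoheadrightarrow M_j$ for all $j$, and then uses the $\alpha(\mathcal{T})$-hypotheses at the two endpoints together with another application of Lemma~\ref{chevelle} to produce the required kernels. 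Your plan to ``glue partial kernel sequences via Yoneda equivalence'' is too underspecified to see that it actually assembles into a single $d$-exact kernel sequence lying in $\mathcal{T}$; and in any case it presupposes the interior $\mathcal{T}$-membership, which, per the first paragraph, you have not obtained.
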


\begin{proof}
Observe that there is always some idempotent $e$ such that every projective $A/\langle e \rangle$-module is also projective as an $A$-module, as well as an idempotent $f$ such that every injective $A/\langle f \rangle$-module is also injective as an $A$-module. In the second case, for any standard $d$-strong torsion class $\mathcal{T}$ in $\mathcal{C}$, then $\mathcal{T}\cap \mathrm{mod}(A/\langle f \rangle)$ is a standard $d$-strong torsion class in $\mathcal{C}\cap \mathrm{mod}(A/\langle f \rangle)$.

By Lemma \ref{indec}, there exists some properly-supporting idempotent $e$ where \\$M_1,\ldots, M_{d}$ are projective-injective $A/\langle e \rangle$-modules.  By the above argument we may assume that each $M_i$ is projective as an $A$-module for $0\leq i\leq d$. As $M_0, M_{d+1}\in\mathcal{T}$ and $\mathrm{Ext}^d_A(M_{d+1},M_0)\ne0$ there must be some $X\in\mathrm{add}(T)$ such that $M_{d+1}\not\cong X$ and there is a surjection $X\twoheadrightarrow M_{d+1}$. There is also a surjection $M_{d}\twoheadrightarrow M_{d+1}$. So there must either be a surjection $M_d\twoheadrightarrow X$ or $X\twoheadrightarrow M_d$. Now if there is a surjection $M_{d}\twoheadrightarrow X$, then we may form a resonance diagram and see that $\mathrm{Ext}^d_A(X,M_0)\ne 0$, a contradiction. Therefore there must be a surjection $X\twoheadrightarrow M_{d}$ and hence $M_d\in\mathcal{T}$. 
 Lemma \ref{chevelle} then implies that $M_i\in\mathcal{T}$ for each $0\leq i\leq d+1$.
 
  Now suppose there is a surjection from $T^\prime_i\twoheadrightarrow M_i$ for some $0<i<d+1$ and $T^\prime_i\in\mathcal{T}$. Then we may form a resonance diagram such that there are surjections $T^\prime_j\twoheadrightarrow M_j$ where $T^\prime_j\in\mathcal{T}$ for all $0\leq j\leq d+1$ . Since both $M_0,M_{d+1}\in\alpha(\mathcal{T})$, applying the definition as well as Lemma \ref{chevelle}, we must have that $M_i\in\alpha(\mathcal{T})$ for all $0\leq i\leq d+1$. The proof is similar for a surjection from $T^\prime_i$ onto a submodule of $M_i$. 
\end{proof}

We are now ready to show our main result, a generalisation of Theorem \ref{crowd}.

\begin{theorem}\label{masod}
Let $A$ be a $d$-Auslander algebra of linearly oriented type $A_n$ with unique $d$-cluster-tilting subcategory $\mathcal{C}$. Then there are bijections between the following:
\begin{enumerate}
\item Proper support-$d$-tilting modules. \label{masod1}
\item Standard $d$-strong torsion classes in $\mathcal{C}$. \label{masod2}
\item Resonant collections of wide subcategories in $\mathcal{C}$.\label{masod3}
\item Standard $d$-strong torsion-free classes in $\mathcal{C}$.\label{masod4}
\item Coresonant collections of wide subcategories in $\mathcal{C}$.\label{masod5}
\end{enumerate}
\end{theorem}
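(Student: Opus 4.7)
The plan is to treat the proper support-$d$-tilting module as the pivot and exhibit bijections between (\ref{masod1}) and each of (\ref{masod2})--(\ref{masod5}). The assignments $T \mapsto \mathrm{Fac}(T) \cap \mathcal{C}$ and $T \mapsto \mathrm{Sub}(T) \cap \mathcal{C}$ land in (\ref{masod2}) and (\ref{masod4}) respectively by Theorem \ref{elso} and its dual, and are surjective by the definition of \emph{standard}. Injectivity of (\ref{masod1}) $\to$ (\ref{masod2}) amounts to recovering $T$ from $\mathcal{T}$, which I would do by characterising the indecomposable summands of $T$ as the Ext-projective objects in $\mathcal{T}$ lying in $\alpha(\mathcal{T})$. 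Applying part (\ref{t3}) of Theorem \ref{elso} to an indecomposable projective $A/\mathrm{ann}(T)$-module $P$ forces $P \in \mathrm{add}(T)$, and the remaining summands are pinned down by the $d$-tilting property together with the equality $|T| = |A/\langle e_T\rangle|$ noted after Miyashita's theorem. Bijectivity of (\ref{masod1}) $\leftrightarrow$ (\ref{masod4}) then follows from the corollary recorded in the paper that every proper support-$d$-tilting module is simultaneously proper support-$d$-cotilting.

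The core of the argument is $(\ref{masod2}) \leftrightarrow (\ref{masod3})$. Starting from $\mathcal{T} = \mathrm{Fac}(T) \cap \mathcal{C}$, I would work inside $\alpha(\mathcal{T})$ from Definition \ref{betadef} and use Lemma \ref{indec2} to deduce that $\alpha(\mathcal{T})$ is closed under middle terms of $d$-exact sequences with both endpoints in $\alpha(\mathcal{T})$. I would then stratify $\alpha(\mathcal{T})$ according to how its objects arise in the resonance diagrams of Example \ref{aresonance}: group together modules that appear as $X_{k,k}$ for some fixed ``depth'' $k$ along the common staircase, so that each stratum $\mathcal{W}_k$ is closed under the $\phi_k$ and their intermediate $d$-exact sequences described in Remark \ref{resin}. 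Axioms (W1) and (W2) for $\mathcal{W}_k$ would then follow directly from the existence of these intermediate $d$-exact sequences, which produce the required internal $d$-resolutions entirely inside the stratum. Directedness $\mathcal{W}_i \prec \mathcal{W}_j$ for $i<j$ is forced by Lemma \ref{indec}: a properly-supporting idempotent $e$ adapted to a generating $d$-exact sequence makes the objects of $\mathcal{W}_j$ projective-injective $A/\langle e\rangle$-modules, while those of $\mathcal{W}_i$ are proper factor modules, killing all $\mathrm{Hom}$'s into non-isomorphic images. Resonance then holds tautologically, since the union was designed to have the generator $T$ of $\mathcal{T}$ as an additive generator.

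For the inverse $(\ref{masod3}) \to (\ref{masod2})$, the definition of a resonant collection directly supplies an additive generator $M$ whose $\mathrm{Fac}(M) \cap \mathcal{C}$ is a standard $d$-strong torsion class, and one checks that applying the stratification above to this class recovers the original collection by establishing uniqueness of the refinement of $\alpha(\mathrm{Fac}(M)\cap \mathcal{C})$ compatible with the resonance diagrams. The bijections $(\ref{masod4}) \leftrightarrow (\ref{masod5})$ are obtained by dualising throughout: replace $\mathrm{Fac}$ by $\mathrm{Sub}$, $\alpha$ by the subcategory defined via injections from cotilting modules, and invoke the dual of Theorem \ref{elso} together with the dual of Lemma \ref{indec2}.

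The main obstacle is verifying that the stratification into wide subcategories is well-defined and that each stratum genuinely satisfies (W\ref{w1}) and (W\ref{w2}) in the sense of Herschend--J{\o}rgensen--Vaso. This requires careful bookkeeping with resonance diagrams to show that the required internal $d$-resolutions exist using only modules of the stratum in question, and that passage to a quotient $A/\langle e \rangle$ via a properly-supporting idempotent from Lemma \ref{indec} preserves the relevant $d$-cluster-tilting and $\mathrm{Ext}$-vanishing properties as guaranteed by Proposition \ref{apt1} and Lemma \ref{skel2}.
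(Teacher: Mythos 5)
The decomposition of the proof into $(\ref{masod1})\leftrightarrow(\ref{masod2})$, $(\ref{masod2})\leftrightarrow(\ref{masod3})$, and duality for $(\ref{masod4})$, $(\ref{masod5})$ matches the paper, and your treatment of $(\ref{masod1})\leftrightarrow(\ref{masod2})$ via Theorem \ref{elso} and Ext-projectives is essentially the paper's argument. The gap is in the core step $(\ref{masod2})\leftrightarrow(\ref{masod3})$. You propose stratifying $\alpha(\mathcal{T})$ by ``depth $k$'' according to which diagonal position $X_{k,k}$ a module occupies in the resonance diagrams of Example \ref{aresonance}. But a resonance diagram is built from a \emph{pair} of $d$-exact sequences sharing a term; a single indecomposable module of $\alpha(\mathcal{T})$ sits in many different resonance diagrams and occupies many different positions $X_{k,l}$ in them, so ``depth'' is a property of the module-in-diagram, not an invariant of the module. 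You flag this well-definedness issue as ``the main obstacle'' but offer no mechanism to resolve it, and there is no obvious canonical choice of diagram.

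The paper sidesteps this entirely by defining an \emph{intrinsic} relation on indecomposable objects of $\alpha(\mathcal{T})$: declare $M\prec N$ when there exists a $d$-exact sequence $0\to X_0\to\cdots\to M\to N\to\cdots\to X_{d+1}\to 0$ with at least one of $X_0,X_{d+1}$ outside $\alpha(\mathcal{T})$, extended transitively. The wide subcategories $\mathcal{W}_k$ are then the classes obtained by separating $M_1,M_2$ precisely when $M_1\prec M_2$, with the convention that incomparable modules may lie in more than one class. The substantive work is then (i) establishing that (W\ref{w2}) holds for each class, which is done via Lemma \ref{indec2} together with the argument that a $d$-exact sequence with both endpoints in $\alpha(\mathcal{T})$ but some middle term outside forces $M_{d+1}\prec M_0$; and (ii) a rather lengthy argument that $\prec$ is total, passing through properly-supporting idempotents and $\mathrm{Ext}$-nonvanishing. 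Your proposed use of Lemma \ref{indec} for directedness has a similar difficulty: that lemma produces a properly-supporting idempotent adapted to a \emph{single} $d$-exact sequence, while directedness $\mathcal{W}_i\prec\mathcal{W}_j$ requires a statement uniform over all $M\in\mathcal{W}_i$, $N\in\mathcal{W}_j$, so you would again need the comparability argument that the paper carries out. To repair your proof you would essentially have to replace the depth stratification with the paper's $\prec$ relation and supply its totality proof.
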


\begin{proof}

(\ref{masod1})$\iff$ (\ref{masod2}) By Theorem \ref{elso}, for any proper support-$d$-tilting module $T$, the class $\mathcal{T}:=\mathrm{Fac}(T)\cap \mathcal{C}$ is a standard $d$-strong torsion class in $\mathcal{C}$. Conversely, given a standard $d$-strong torsion class $\mathcal{T}$, consider the $\mathrm{Ext}$-projectives in $\mathcal{T}$; that is the class of modules 
$$\{M\in\mathcal{T}|\mathrm{Ext}_A^d(M,\mathcal{T})=0\}.$$
It can be seen that $T$ is an additive generator of this class, and hence each standard $d$-strong torsion class determines a unique  proper support-$d$-tilting module.
(\ref{masod2})$\iff$ (\ref{masod3})

For a proper support-$d$-tilting module, we wish to find a directed collection of wide subcategories that comprise $\alpha(\mathcal{T})$. For two indecomposable modules\\ $M,N\in\alpha(\mathcal{T})$, define $M\prec N$ if there exists a $d$-exact sequence in $\mathcal{C}$
$$0\rightarrow X_0\rightarrow X_1\rightarrow \cdots \rightarrow M\rightarrow N\rightarrow \cdots\rightarrow X_{d+1}\rightarrow 0$$
where at least one of $X_0$ and $X_{d+1}$ is not in $\alpha(\mathcal{T})$. This definition may be extended transitively.

Now divide $\alpha(\mathcal{T})$ into classes such that any module $M_1,M_2\in\alpha(\mathcal{T})$ are in different classes whenever $M_1\prec M_2$ and extend additively.
Note that we allow there to exist modules $M_1,M_2,N\in\alpha(\mathcal{T})$ such that $M_1\not \prec N$ $N\not \prec M_1$, $M_2\not\prec N$, $N\not \prec M_1$ and $M_1\prec M_2$. In this case, we allow $N$ to be in the same class as $M_1$ as well as the same class as $M_2$.

By construction, each of these classes is a wide subcategory of $\mathcal{C}$: by definition condition (W\ref{w1}) holds. In addition, Lemma \ref{indec2} implies that condition (W\ref{w2}) holds: any sequence $$\phi:0\rightarrow M_0\rightarrow M_1\rightarrow \cdots \rightarrow  M_{d+1}\rightarrow 0$$ such that $M_0, M_{d+1}\in \alpha(\mathcal{T})$ and $M_i\not\in \alpha(\mathcal{T})$ must satsify $M_{d+1}\prec M_0$ . Assume that this is not true: there must be a summand $N$ of $M_d$ such that there is a surjection $N\twoheadrightarrow M_{d+1}$. There is some $T_0 \in\mathrm{add}(T)$ and there are surjections $$N\twoheadrightarrow T_0\twoheadrightarrow M_{d+1},$$ where we allow $N\cong T_0$ (but not $M_{d+1}\cong T_0$ as this would contradict the $\mathrm{Ext}$-projectivity of $T$ in $\mathcal{T}$). Since $M_{d+1}\in\alpha(T)$, there is a $d$-exact sequence 
$$0\rightarrow T_{d}\rightarrow \cdots \rightarrow T_1\rightarrow T_0\rightarrow M_{d+1}\rightarrow 0$$ such that $T_i\in \alpha(\mathcal{T})$ for all $0\leq i\leq d$.
This induces an exact sequence $$\mathrm{Hom}_A(T_d,M_0)\rightarrow \mathrm{Ext}^d_A(M_{d+1},M_0)\rightarrow \mathrm{Ext}^d_A(T_0,M_0)=0.$$ Therefore $\mathrm{Hom}_A(T_d,M_0)\ne 0$ and this morphism together with $\phi$ induces a resonance diagram wherein $M_{d+1}\prec M_{0}$. 
We claim in addition that $\prec$ is a total order: suppose there is a $d$-exact sequence $$\phi_N:0\rightarrow N_0\rightarrow N_1\rightarrow \cdots\rightarrow N_{d+1}\rightarrow 0$$ such that $N_i\in\alpha(\mathcal{T})$ for all $0\leq i\leq d+1$ apart from some $0\leq l\leq d+1$. Suppose further that there is a module $M_j\in\alpha(\mathcal{T})$  such that $M_{j}\prec N_i$ for all $1\leq i\leq d+1$: there is a $d$-exact sequence  $$\phi_M:0\rightarrow M_0\rightarrow M_1\rightarrow \cdots\rightarrow M_{d+1}\rightarrow 0$$ such that $M_{k}\cong N_l$ for some $k>j$. Now form a resonance diagram - by Lemma \ref{indec} there exists a properly-supporting idempotent $e$ such that $M_0,M_1,\ldots, M_d$ and $N_0,N_1,\ldots, N_d$ are projective as $A/\langle e \rangle$-modules and $M_1,M_2,\ldots, M_{d+1}$ as well as $N_1,N_2,\ldots, N_{d+1}$ are projective as $A/\langle e \rangle$-modules. Also Lemma \ref{skel2} implies that there is some functor $F$ such that $F(T)$ is a tilting $A/\langle e \rangle$-module. Assume that $M_j$ is projective-injective as an $A/\langle e\rangle$-module and that there is a sequence in the chosen resonance diagram 
$$\phi_X:0\rightarrow X_0\rightarrow \cdots \rightarrow M_j\rightarrow \cdots \rightarrow N_i\rightarrow \cdots\rightarrow X_{d+1}\rightarrow 0$$
for some $0< l<i$. We have that $M_0$ is a projective $A/\langle e\rangle$-module, and so there must be a $F(T)$-coresolution of $X_0$.  If $X_ 0\not\in \mathrm{add}(F(T))$, then the $F(T)$-coresolution must factor through $M_j$ - a contradiction. We still have to check in case modules that are in $\mathrm{add}(F(T))$ might not be in $\alpha(\mathcal{T})$. So assume that $M_0\in\alpha(\mathcal{T})$ and that every projective $A/\langle e\rangle$-module is projective as an $A$-module. The module $M_1$ is projective as an $A$-module. Furthermore there is a morphism $M_1\rightarrow N_1$. If $M_1\not\in\mathrm{add}(T)$ then there is some $T^\prime \in\mathrm{add}(T)$ such that $\mathrm{Ext}^d_A(T^\prime, M_1)\ne0$. Within the resonance diagram containing $\phi_M$ and $\phi_N$, there is a further $d$-exact sequence $$\phi_L:0\rightarrow L_0\rightarrow L_1 \rightarrow\cdots\rightarrow  L_{d+1}\rightarrow 0$$and exact sequences $L_i\rightarrow M_i\rightarrow N_i$ for $0\leq i\leq k-2$. We must have isomorphisms $\mathrm{Ext}^d_A(T^\prime,L_1)\cong \mathrm{Ext}^d_A(T^\prime, M_1)\cong \mathrm{Ext}^d_A(T^\prime, M_2)$. Since $M_0,N_1\in\mathrm{add}(T)$, this means the $d$-exact sequence from $L_1$ to $T^\prime$ factors through $N_1$. Then either the terms in a cokernel of the induced morphism $X\rightarrow N_2$ (contained in the resonance diagram) are included in the $\mathrm{add}(T)$ coresolution of $N_2$, or there is another $T^{\prime\prime}\in\mathrm{add}(T)$ such that $\mathrm{Ext}^d_A(T^{\prime\prime}, M_2)\ne 0$. Apply this argument along $\phi_M$ - at some point $M_k\cong N_l\in\alpha(\mathcal{T})$. Therefore $\mathrm{Ext}^d_A(T, M_0)\ne0$, as the terms in the cokernel of $L_{k-2}\rightarrow M_{k-1}$ must be contained in the $\mathrm{add}(T)$-coresolution of $M_{k-1}$. This is a contradiction.

Conversely assume that every injective $A/\langle e\rangle$-module is injective as an $A$-module. Form a resonance diagram between $\phi_N$ and the projective covers of $N_i$ \\($0\leq i\leq d+1$), which are also injective modules for all $1\leq i\leq d+1$ and hence in $\mathcal{T}$. This induces a $d$-exact sequence in $\alpha(\mathcal{T})$:
$$\phi_{N}^\prime:0\rightarrow  \Omega^d(N_1)\rightarrow \cdots \rightarrow\Omega^d(N_{d+1})\rightarrow N_0\rightarrow 0.$$
Form a resonance diagram with $\phi_N^\prime $ and the morphism $M_0\rightarrow N_0$ and apply the previous argument. This implies that $N_i\in\alpha(\mathcal{T})$ for all $0\leq i\leq d+1$.

Now let $T^\prime$ be the minimal summand of $T$ such that $\mathrm{Fac}(T^\prime)=\mathrm{Fac}(T)$. By definition, $T^\prime\in\alpha(\mathcal{T})$. Now
let $(\mathcal{W}_1,\ldots,\mathcal{W}_n)$ be the directed collection of wide subcategories defined above. Let $M$ be an additive generator of $\mathcal{W}_1\cup\cdots\cup \mathcal{W}_n$ - $M$ is in fact an additive generator of $\alpha(\mathcal{T})$. Therefore $\mathrm{Fac}(M)=\mathrm{Fac}(T^\prime)$, and $(\mathcal{W}_1,\ldots,\mathcal{W}_n)$ is a resonant collection of wide subcategories.

The bijections (\ref{masod1})$\iff$ (\ref{masod4})  and  (\ref{masod1})$\iff$ (\ref{masod5}) are dual. 
\end{proof}

\section{Examples and further directions}
	
\begin{eg} As in Example \ref{eg1}, let $A$ be the algebra 
$$\begin{tikzpicture}[xscale=5,yscale=2.5]
	\node(xa) at (-2,1){$1$};
	\node(xb) at (-1.6,1){$4$};
	\node(xc) at (-1.2,1){$6$};
	
	\node(xab) at (-1.8,1.2){$2$};
	\node(xac) at (-1.4,1.2){$5$};
	
	\node(xbb) at (-1.6,1.4){$3$};
	
	\draw[->](xa) edge(xab);
	\draw[->](xb) edge(xac);
	
	\draw[->](xbb) edge(xac);
		
	\draw[->](xab) edge(xb);
	\draw[->](xac) edge(xc);
	
	\draw[->](xab) edge(xbb);
	
	\draw[-,dotted](xc) edge(xb);
	\draw[-,dotted](xb) edge(xa);
	\draw[-,dotted](xac) edge(xab);

	\end{tikzpicture}$$
	The support-$2$-tilting modules form a lattice under inclusion of standard $d$-strong torsion classes, as depicted in Figure \ref{stdiag}. The module $T=P_1\oplus P_2\oplus P_3\oplus P_4\oplus P_5\oplus S_4$ is a proper support-$2$-tilting $A$-module, and its associated $2$-strong torsion class consists of $$\bigg \langle\begin{matrix} 1\\2\\3\end{matrix}\oplus  \begin{matrix} 1\\2\end{matrix}\oplus  \begin{matrix} 1\end{matrix}\oplus  \begin{matrix} &2&\\3&&4\\&5&\end{matrix}\oplus  \begin{matrix} 2\\4\end{matrix}\oplus  \begin{matrix} 3\\5\\6\end{matrix}\oplus  \begin{matrix} 4\\5\end{matrix}\oplus  \begin{matrix} 4\end{matrix}\oplus  \begin{matrix} 5\\6\end{matrix} \bigg \rangle.$$

The resonant collection of wide subcategories of $\mathcal{C}$ consists of $$\mathcal{W}_1=\bigg \langle \begin{matrix} 1\\2\end{matrix}\oplus  \begin{matrix} 1\\2\\3 \end{matrix}\oplus  \begin{matrix}3\\5\\6\end{matrix}\oplus\begin{matrix} 5\\6\end{matrix} \bigg \rangle.$$
$$\mathcal{W}_2=\bigg \langle \begin{matrix} 1\end{matrix}\oplus  \begin{matrix} 1\\2\\3 \end{matrix}\oplus  \begin{matrix} &2&\\3&&4\\&5&\end{matrix}\oplus\begin{matrix} 4\\5\end{matrix} \bigg \rangle.$$

In fact, this collection of wide subcategories is also coresonant, under the support-$2$-cotilting module $$C=\begin{matrix} 1\\2\\3 \end{matrix} \oplus \begin{matrix} 1\\2\end{matrix}\oplus  \begin{matrix} 1 \end{matrix}\oplus  \begin{matrix} &2&\\3&&4\\&5&\end{matrix}\oplus  \begin{matrix}3\\5\\6\end{matrix}\oplus\begin{matrix} 6\end{matrix}$$
\end{eg}

\begin{eg}\label{eg2}
For example, take $A$ to be the algebra with quiver
	$$\begin{tikzpicture}[xscale=2,yscale=2.5]
	\node(a) at (-2,1){$1$};
	\node(b) at (-1.5,1){$2$};
	\node(c) at (-1,1){$3$};
    \node(d) at (-0.5,1){$4$};
	\node(e) at (0,1){$5$};
	\node(f) at (0.5,1){$6$};
    \node(g) at (1,1){$7$};
	\draw[->](a) edge(b);
	\draw[->](b) edge(c);
	\draw[->](c) edge(d);
	\draw[->](d) edge(e);
	\draw[->](e) edge(f);
	\draw[->](f) edge(g);
	\end{tikzpicture}$$
and with relations given by all paths of length 3. 
This is an example from a class of $d$-representation-finite algebras was introduced in \cite[Section 5]{vaso}. Furthermore, the wide subcategories of these algebras have been classified in \cite{hjv}, see also \cite{fed} for further study of wide subcategories of such algebras.
Then  $$\mathcal{C}=\bigg \langle P_1\oplus P_2\oplus P_3\oplus P_4\oplus P_5\oplus P_6\oplus P_7\oplus I_1\oplus I_2 \bigg \rangle$$ is a $2$-cluster-tilting subcategory of $\mathrm{mod}(A)$. 
There are five proper support-$2$-tilting $A$-modules: $S_1$, $S_7$,  $A$ and the following two modules $$T_1:=P_1\oplus P_2\oplus P_3\oplus P_4\oplus P_5\oplus P_6 \oplus I_2;$$ $$T_2:=P_1\oplus P_2\oplus P_3\oplus P_4\oplus P_5\oplus I_1\oplus I_2.$$ A similar process to Theorem \ref{masod} shows that $T_1$ determines a resonating collection of wide subcategories: $(P_6,P_5,P_4,P_3,P_2,P_1)$ and $T_2$ determines the resonating collection of wide subcategories $(P_5,P_4,P_3,P_2,P_1)$. 
\end{eg}

Observe that for the algebra $A$ in Example \ref{eg2}, any wide subcategory that is contained in a resonating collection of wide subcategories generated by either the whole of $\mathcal{C}$, or by precisely one module. The classification of wide subcategories for this algebra that given in \cite{hjv} tells us that not all wide subcategories are of such a form. Restricting our focus, we ask the following question:

\begin{qu}
Let $(A,\mathcal{C})$ be an almost-directed $d$-homological pair. Can all wide subcategories be obtained from a resonant collection of wide subcategories associated with a proper support-$d$-tilting module?
\end{qu}

Another question we may ask is the following:

\begin{qu}
Let $(A,\mathcal{C})$ be an almost-directed $d$-homological pair such that\\ $\mathrm{gl.dim}(A)\leq d$. Is it possible to classify the directed collections of wide subcategories that are both resonant and coresonant?
\end{qu}

\section{Acknowledgements}
This paper was completed as part of the author's PhD studies, with the support of the Austrian Science Fund (FWF): W1230. I would like to thank my supervisor, Karin Baur, for her continued help and support during my studies. This project originated during a research visit to Newcastle University, and further work was completed on a second research visit to University of Paris-Sud. The author would like to thank both universities for their generous hospitality, and in particular Peter J\o rgensen for introducing him to the article \cite{hjv}, as well as many fruitful conversations; and also Pierre-Guy Plamondon and Sondre Kvamme for interesting discussions.

	\begin{figure} 
		\caption{The support-$2$-tilting lattice for the $2$-Auslander algebra of linearly-oriented type $A_3$.}
	$$\begin{tikzpicture}[scale=2]\label{stdiag}
    \node(a) at (0,0.5){$\emptyset$};

	\node(b) at (2.5,1){$\begin{smallmatrix} 1\end{smallmatrix}$};
   
    \draw[-](a) --(b) ;
	
	\node(c) at (0,1.7){$\begin{smallmatrix} 4\end{smallmatrix}$};

	\draw[-](a) -- (c) ;
	
	\node(d) at (-3,1.5){$ \begin{smallmatrix}6\end{smallmatrix}$};

	\draw[-](a) --(d) ;
	
	\node(e) at (-0.5,3.1){$ \begin{smallmatrix} 1\end{smallmatrix}\  \begin{smallmatrix} 6\end{smallmatrix}$};
	
	\draw[-] (b)--(e) ;
	
	\draw[-] (d)--(e) ;
	
	\node(f) at (-2.5,4.5) {$ \begin{smallmatrix} 4\\ 5\end{smallmatrix}\ \begin{smallmatrix} 4\end{smallmatrix}\  \begin{smallmatrix} 5\\6\end{smallmatrix}$};
	
	\draw[-] (c) to [bend left=10] (f);

\node(g) at (-3,7.5){$  \begin{smallmatrix} 4\\ 5\end{smallmatrix}\  \begin{smallmatrix} 5\\6\end{smallmatrix}\ \begin{smallmatrix} 6\end{smallmatrix}$};

\draw[-] (f) -- (g) ;

\draw[-] (d) to [bend left= 15]  (g) ;

\node(h) at (2.5,2.2) {$\begin{smallmatrix} 1\\ 2\end{smallmatrix}\ \begin{smallmatrix} 1\end{smallmatrix}\  \begin{smallmatrix} 2\\ 4\end{smallmatrix}$};

\draw (b) -- (h) ;

\node(i) at (1.2,3.5) {$\begin{smallmatrix} 1\\ 2\end{smallmatrix}\ \begin{smallmatrix} 2\\ 4\end{smallmatrix}\ \begin{smallmatrix} 4\end{smallmatrix}\ $};

\draw (c) to  (i);

\draw (h) to  (i);

\node(j) at (2,7.3){$ \begin{smallmatrix} 1\\2\\3\end{smallmatrix}\  \begin{smallmatrix} 1\end{smallmatrix}\ \begin{smallmatrix} &2&\\3&&4\\&5&\end{smallmatrix}\  \begin{smallmatrix} 3\\5\\6\end{smallmatrix}\ \begin{smallmatrix} 5\\6\end{smallmatrix}\ \begin{smallmatrix} 6 \end{smallmatrix}\ $};

\node(q) at (2, 6){$\begin{smallmatrix} 1\\2\\3\end{smallmatrix}\ \begin{smallmatrix} 1\\2\end{smallmatrix}\ \begin{smallmatrix} 1\end{smallmatrix}\ \begin{smallmatrix} &2&\\3&&4\\&5&\end{smallmatrix}\  \begin{smallmatrix}3\\ 5\\6\end{smallmatrix}\   \begin{smallmatrix} 6 \end{smallmatrix}$};

\draw (q) to (j); 

\draw (e) to [bend left=30]  (q);

\node(k) at (-1.8,6.8) {$ \begin{smallmatrix} 1\\2\\3\end{smallmatrix}\  \begin{smallmatrix} &2&\\3&&4\\&5&\end{smallmatrix}\  \begin{smallmatrix} 3\\5\\6\end{smallmatrix}\  \begin{smallmatrix} 4\\5\end{smallmatrix}\  \begin{smallmatrix} 4\end{smallmatrix}\ \begin{smallmatrix} 5\\6\end{smallmatrix}$};

\draw (f) to (k); 

\node(m) at (0,8){$ \begin{smallmatrix} 1\\2\\3\end{smallmatrix}\  \begin{smallmatrix} &2&\\3&&4\\&5&\end{smallmatrix}\  \begin{smallmatrix} 3\\5\\6\end{smallmatrix}\  \begin{smallmatrix} 4\\5\end{smallmatrix}\ \begin{smallmatrix} 5\\6\end{smallmatrix}\ \begin{smallmatrix} 6\end{smallmatrix}$};

\draw (g) to[bend left=10]  (m); 

\draw (k) to  (m); 

\draw (j) to [bend right=10] (m);

\node(n) at (2.5,4){$ \begin{smallmatrix} 1\\2\\3\end{smallmatrix}\ \begin{smallmatrix} 1\\2\end{smallmatrix}\ \begin{smallmatrix} 1\end{smallmatrix}\ \begin{smallmatrix} &2&\\3&&4\\&5&\end{smallmatrix}\ \begin{smallmatrix} 2\\4\end{smallmatrix}\  \begin{smallmatrix} 3\\5\\6\end{smallmatrix}$};

\node(o) at (1.25,4.95){$ \begin{smallmatrix} 1\\2\\3\end{smallmatrix}\ \begin{smallmatrix} 1\\2\end{smallmatrix}\ \begin{smallmatrix} &2&\\3&&4\\&5&\end{smallmatrix}\ \begin{smallmatrix} 2\\4\end{smallmatrix}\  \begin{smallmatrix} 3\\5\\6\end{smallmatrix}\  \begin{smallmatrix} 4\end{smallmatrix}$};

\node(p) at (-0.8,5.8){$ \begin{smallmatrix} 1\\2\\3\end{smallmatrix}\ \begin{smallmatrix} &2&\\3&&4\\&5&\end{smallmatrix}\ \begin{smallmatrix} 2\\4\end{smallmatrix}\  \begin{smallmatrix} 3\\5\\6\end{smallmatrix}\  \begin{smallmatrix} 4\\5\end{smallmatrix}\ \begin{smallmatrix} 4\end{smallmatrix}$};

\draw (o) to  (p); 

\draw (p) to  (k);

\draw (n) to [bend right =10]  (q);

	\draw[-] (h) to    (n); 

	\draw[-] (i) to    (o);

	\draw[-] (n) to    (o);

\end{tikzpicture}$$

\end{figure}
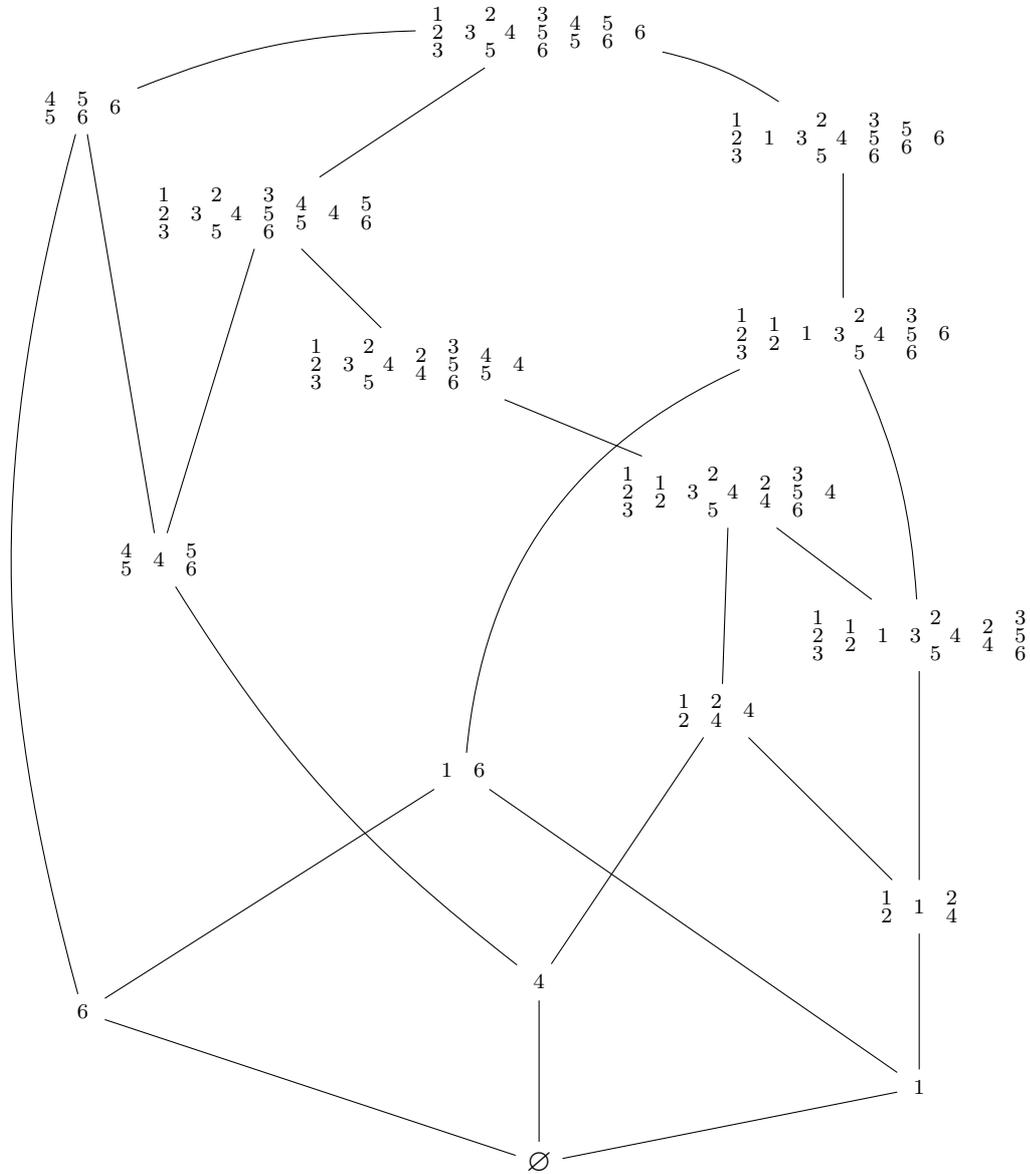

	

\bibliographystyle{amsplain}
\bibliography{higher}

\end{document}